\documentclass[12pt]{amsart}
\usepackage[left=1in, right=1in, top=1in]{geometry}

\usepackage{etoolbox}

\makeatletter
\let\old@tocline\@tocline
\let\section@tocline\@tocline
\newcommand{\section@dotsep}{4.5}
\newcommand{\subsection@dotsep}{4.5}
\patchcmd{\@tocline}
  {\hfil}
  {\nobreak
     \leaders\hbox{$\m@th
        \mkern \section@dotsep mu\hbox{.}\mkern \section@dotsep mu$}\hfill
     \nobreak}{}{}
\let\section@tocline\@tocline
\let\@tocline\old@tocline

\patchcmd{\@tocline}
  {\hfil}
  {\nobreak
     \leaders\hbox{$\m@th
        \mkern \subsection@dotsep mu\hbox{.}\mkern \subsection@dotsep mu$}\hfill
     \nobreak}{}{}
\let\subsection@tocline\@tocline
\let\@tocline\old@tocline

\let\old@l@section\l@section
\let\old@l@subsection\l@subsection

\def\@tocwriteb#1#2#3{%
  \begingroup
    \@xp\def\csname #2@tocline\endcsname##1##2##3##4##5##6{%
      \ifnum##1>\c@tocdepth
      \else \sbox\z@{##5\let\indentlabel\@tochangmeasure##6}\fi}%
    \csname l@#2\endcsname{#1{\csname#2name\endcsname}{\@secnumber}{}}%
  \endgroup
  \addcontentsline{toc}{#2}%
    {\protect#1{\csname#2name\endcsname}{\@secnumber}{#3}}}%

\newlength{\@tocsectionindent}
\newlength{\@tocsubsectionindent}
\newlength{\@tocsubsubsectionindent}
\newlength{\@tocsectionnumwidth}
\newlength{\@tocsubsectionnumwidth}
\newlength{\@tocsubsubsectionnumwidth}
\newcommand{\settocsectionnumwidth}[1]{\setlength{\@tocsectionnumwidth}{#1}}
\newcommand{\settocsubsectionnumwidth}[1]{\setlength{\@tocsubsectionnumwidth}{#1}}
\newcommand{\settocsubsubsectionnumwidth}[1]{\setlength{\@tocsubsubsectionnumwidth}{#1}}
\newcommand{\settocsectionindent}[1]{\setlength{\@tocsectionindent}{#1}}
\newcommand{\settocsubsectionindent}[1]{\setlength{\@tocsubsectionindent}{#1}}
\newcommand{\settocsubsubsectionindent}[1]{\setlength{\@tocsubsubsectionindent}{#1}}

\renewcommand{\l@section}{\section@tocline{1}{\@tocsectionvskip}{\@tocsectionindent}{}{\@tocsectionformat}}%
\renewcommand{\l@subsection}{\subsection@tocline{1}{\@tocsubsectionvskip}{\@tocsubsectionindent}{}{\@tocsubsectionformat}}%
\renewcommand{\l@subsubsection}{\subsubsection@tocline{1}{\@tocsubsubsectionvskip}{\@tocsubsubsectionindent}{}{\@tocsubsubsectionformat}}%
\newcommand{\@tocsectionformat}{}
\newcommand{\@tocsubsectionformat}{}
\newcommand{\@tocsubsubsectionformat}{}
\expandafter\def\csname toc@1format\endcsname{\@tocsectionformat}
\expandafter\def\csname toc@2format\endcsname{\@tocsubsectionformat}
\expandafter\def\csname toc@3format\endcsname{\@tocsubsubsectionformat}
\newcommand{\settocsectionformat}[1]{\renewcommand{\@tocsectionformat}{#1}}
\newcommand{\settocsubsectionformat}[1]{\renewcommand{\@tocsubsectionformat}{#1}}
\newcommand{\settocsubsubsectionformat}[1]{\renewcommand{\@tocsubsubsectionformat}{#1}}
\newlength{\@tocsectionvskip}
\newcommand{\settocsectionvskip}[1]{\setlength{\@tocsectionvskip}{#1}}
\newlength{\@tocsubsectionvskip}
\newcommand{\settocsubsectionvskip}[1]{\setlength{\@tocsubsectionvskip}{#1}}
\newlength{\@tocsubsubsectionvskip}
\newcommand{\settocsubsubsectionvskip}[1]{\setlength{\@tocsubsubsectionvskip}{#1}}

\patchcmd{\tocsection}{\indentlabel}{\makebox[\@tocsectionnumwidth][l]}{}{}
\patchcmd{\tocsubsection}{\indentlabel}{\makebox[\@tocsubsectionnumwidth][l]}{}{}
\patchcmd{\tocsubsubsection}{\indentlabel}{\makebox[\@tocsubsubsectionnumwidth][l]}{}{}

\newcommand{\@sectypepnumformat}{}
\renewcommand{\contentsline}[1]{%
  \expandafter\let\expandafter\@sectypepnumformat\csname @toc#1pnumformat\endcsname%
  \csname l@#1\endcsname}
\newcommand{\@tocsectionpnumformat}{}
\newcommand{\@tocsubsectionpnumformat}{}
\newcommand{\@tocsubsubsectionpnumformat}{}
\newcommand{\setsectionpnumformat}[1]{\renewcommand{\@tocsectionpnumformat}{#1}}
\newcommand{\setsubsectionpnumformat}[1]{\renewcommand{\@tocsubsectionpnumformat}{#1}}
\newcommand{\setsubsubsectionpnumformat}[1]{\renewcommand{\@tocsubsubsectionpnumformat}{#1}}
\renewcommand{\@tocpagenum}[1]{%
  \hfill {\mdseries\@sectypepnumformat #1}}

\let\oldappendix\appendix
\renewcommand{\appendix}{%
  \leavevmode\oldappendix%
  \addtocontents{toc}{%
    \protect\settowidth{\protect\@tocsectionnumwidth}{\protect\@tocsectionformat\sectionname\space}%
    \protect\addtolength{\protect\@tocsectionnumwidth}{2em}}%
}
\makeatother

\makeatletter
\settocsectionnumwidth{2em}
\settocsubsectionnumwidth{2.5em}
\settocsubsubsectionnumwidth{3em}
\settocsectionindent{1pc}%
\settocsubsectionindent{\dimexpr\@tocsectionindent+\@tocsectionnumwidth}%
\settocsubsubsectionindent{\dimexpr\@tocsubsectionindent+\@tocsubsectionnumwidth}%
\makeatother

\settocsectionvskip{0pt}
\settocsubsectionvskip{0pt}
\settocsubsubsectionvskip{0pt}

\usepackage{bbding}
\usepackage{mathrsfs}
\usepackage{mathrsfs}
\usepackage{amsfonts}
\usepackage{cases}
\usepackage{latexsym}
\usepackage{amsmath}
\usepackage[all]{xy}
\usepackage{stmaryrd}
\usepackage{amsmath,amssymb,amscd,bbm,amsthm,mathrsfs,dsfont}
\usepackage{color}
\usepackage{fancyhdr}
\usepackage{amsxtra,ifthen}
\usepackage{verbatim}

\usepackage{fancyhdr}
\usepackage{amsxtra,ifthen}
\usepackage{verbatim}

\usepackage[numbers,sort&compress]{natbib}
\usepackage[pagebackref]{hyperref}
\usepackage{hypernat}

\usepackage{mathrsfs}
\usepackage{mathrsfs}
\usepackage{amsfonts}
\usepackage{comment}
\usepackage{cases}
\usepackage{latexsym}
\usepackage{amsmath}
\usepackage[all]{xy}
\usepackage{stmaryrd}
\usepackage{amsfonts}
\usepackage{amsmath,amssymb,amscd,bbm,amsthm,mathrsfs,dsfont}

\DeclareRobustCommand\longtwoheadrightarrow
     {\relbar\joinrel\twoheadrightarrow}

\usepackage{tikz}

\usepackage{pgflibraryarrows}
\usepackage{pgflibrarysnakes}

\usepackage[numbers,sort&compress]{natbib}
\usepackage[pagebackref]{hyperref}
\usepackage{hypernat}

\usepackage{color, hyperref}
\definecolor{blue}{rgb}{0.9,0.0,0.9}
\hypersetup{colorlinks, breaklinks,
            linkcolor=red,urlcolor=blue,
            anchorcolor=blue,citecolor=blue}

\usepackage{fancyhdr}
\usepackage{amsxtra,ifthen}
\usepackage{verbatim}

\usepackage{tikz,xcolor,hyperref}
\definecolor{lime}{HTML}{A6CE39}
\DeclareRobustCommand{\orcidicon}{
\begin{tikzpicture}
\draw[lime, fill=lime] (0,0)
circle[radius=0.16]
node[white]{{\fontfamily{qag}\selectfont \tiny \.{I}D}}; 
\end{tikzpicture}
\hspace{-2mm}
}
\foreach \x in {A, ..., Z}{%
\expandafter\xdef\csname orcid\x\endcsname{\noexpand\href{https://orcid.org/\csname orcidauthor\x\endcsname}{\noexpand\orcidicon}}
}

\usepackage{enumitem}

\theoremstyle{plain}
\newtheorem{theorem}{Theorem}[section]
\newtheorem{lemma}[theorem]{Lemma}
\newtheorem{proposition}[theorem]{Proposition}
\newtheorem{hypothesis}[theorem]{HYP}
\newtheorem{corollary}[theorem]{Corollary}

\theoremstyle{definition}

\newtheorem{remark}[theorem]{Remark}

\newtheorem{question}[theorem]{Question}

\allowdisplaybreaks[4]

\begin{document}

\title[Multivariable $(\varphi ,\Gamma )$-modules and Representations of Products of Galois Groups]
{Multivariable $(\varphi ,\Gamma )$-modules and Representations of Products of Galois Groups:\\
 The Case of Imperfect Residue Field}

\author{Jishnu Ray \hspace{-1.5mm}\orcidA{}}
\address{Ray: School of Mathematics, Tata Institute of Fundamental Research, Homi Bhabha Road, Colaba, Mumbai 400005, India}
\email{jishnuray1992@gmail.com}\email{jishnu.ray@tifr.res.in}

\author{Feng Wei* \hspace{-1.5mm}\orcidB{}}\thanks{*Corresponding author}
\address{Wei (Corresponding Author): School of Mathematics and Statistics, Beijing
Institute of Technology, Beijing, 100081, China}
\email{daoshuo@hotmail.com} \email{daoshuowei@gmail.com}

\author{Gergely Z\'abr\'adi \hspace{-1.5mm}\orcidC{}}
\address{Z\'abr\'adi: Institute of Mathematics, E\"otv\"os Lor\'and University, P\'azm\'any P\'eter s\'et\'any 1/C, H-1117 Budapest, 
Hungary and MTA Lend\"ulet Automorphic Research Group}

\email{gergely.zabradi@ttk.elte.hu}

\begin{abstract}
Let $K$ be a complete discretely valued field with mixed characteristic $(0, p)$ and imperfect 
residue field $k_\alpha$. Let $\Delta$ be a finite set. We construct an equivalence of categories between finite dimensional 
$\Bbb{F}_p$-representations of the product of $\Delta$ copies of the absolute Galois group of $K$ and multivariable \' etale 
$(\varphi, \Gamma)$-modules over a multivariable Laurent series ring over $k_\alpha$. \\

\vspace{0.1mm}

\noindent{\textit {R\'esum\'e} ($(\varphi,\Gamma)$-modules multivariables et repr\'esentations du produit du groupe de Galois: le cas des corps r\'esiduels imparfaits)}\\
   Soit $K$ un corps discr\`etement valu\'e \`a charact\'eristique mixte $(0, p)$ et un corps 
r\'esiduel imparfait $k_\alpha$. Soit $\Delta$ un ensemble fini. Nous \'etablissons une \'equivalence de cat\'egories 
entre des repr\'esentations de dimensions finies sur $\mathbb{F}_p$ du produit de $\Delta$ copies du groupe 
absolu de Galois de $K$ et des $(\varphi, \Gamma)$-modules \'etales multivariables sur un anneau multivariable 
des s\'eries Laurent sur $k_\alpha$.
\end{abstract}

\subjclass[2010]{11S37, 11S20, 20G05, 20G25, 22E50}

\keywords{\'etales $(\varphi, \Gamma)$-module, $p$-adic Galois representations, imperfect residue field}

\thanks{}

\date{\today}

\maketitle


\section{Introduction}\label{xxsec1}

\subsection{Motivation of this work}\label{xxsec1.1}

Fontaine's theory of $(\varphi, \Gamma)$-modules is a fundamental tool to describe 
and classify continuous representations of the Galois 
group of a finite extension of $\Bbb{Q}_p$ on a finite-dimensional $\Bbb{Q}_p$-vector space. 
With the help of Fontaine's theory of $(\varphi, \Gamma)$-modules, one can understand the 
$p$-adic and mod-$p$ Langlands correspondence in the 
case of the general linear group ${\rm GL}_2$ over the field $\Bbb{Q}_p$ of $p$-adic numbers, see 
\cite{Breuil1, Breuil2, Breuil3, BreuilPaskunas, Colmez1, Colmez2, Colmez3, Paskunas}. 
By invoking the theory of $(\varphi, \Gamma)$-modules, the $p$-adic and mod-$p$ 
representations of ${\rm GL}_2(\Bbb{Q}_p)$ can be connected with $p$-adic and mod-$p$ Galois representations of $\Bbb{Q}_p$. 
To extend the correspondence to other $p$-adic reductive groups beyond ${\rm GL}_2(\Bbb Q_p)$, 
one naturally wants to generalize Fontaine's theory of $(\varphi,\Gamma)$-modules. 
There have been conjectural progress in attempts to generalize $p$-adic Langlands 
beyond ${\rm GL}_2(\Bbb{Q}_p)$ along these lines; two kinds of multivariable version of 
$(\varphi, \Gamma)$-modules can be found in the literature. Berger's multivariable $(\varphi, \Gamma)$-modules 
is an attempt to generalize $p$-adic Langlands for ${\rm GL}_2(F)$, where $F$ is a finite extension of 
$\Bbb{Q}_p$ \cite{Berger3, Berger4}. The third author of this current work also defines multivariable 
$(\varphi, \Gamma)$-module over a $m$-variable Laurent series ring in an attempt to 
generalize $p$-adic Langlands for ${\rm GL}_m(\Bbb{Q}_p)$ \cite{SchneiderVignerasZabradi,Zabradi1,Zabradi2}. One might also try to look at 
Z\' abr\' adi's multivariable $(\varphi, \Gamma)$-modules over Lubin-Tate extension to conjecturally understand $p$-adic Langlands for ${\rm GL}_m(F)$ \cite{Grosse-Klonne3}.
It has become clear that essentially all of $p$-adic Hodge theory can be formulated in terms of 
$(\varphi, \Gamma)$-modules; moreover, this formulation has driven much recent progress in the 
subject and powered some notable applications in arithmetic geometry \cite{BrinonConrad}. See \cite{Kedlaya} for a quick 
introduction to this circle of ideas or \cite{Schneider} for a more in-depth treatment. Multivariable $(\varphi,\Gamma)$-modules are also related \cite{Kedlaya2, CarterKedlayaZabradi} to Scholze's theory of perfectoid spaces.

This paper can be considered as a complement to the third author's independent work \cite{Zabradi2}
in which he shows that the category of continuous representations of the $m^{\rm th}$ direct product 
of the absolute Galois group of $\Bbb{Q}_p$ on finite dimensional $\Bbb{F}_p$-vector spaces (resp.\ $\Bbb{Z}_p$-modules, resp.\ $\Bbb{Q}_p$-vector spaces)
is equivalent to the category of \' etale multivariable $(\varphi, \Gamma)$-modules over a certain $m$-variable 
Laurent series ring over $\Bbb{F}_p$ (resp.\ over $\Bbb{Z}_p$, resp.\ over $\Bbb{Q}_p$). In the current paper, we are going to extend this equivalence 
of categories for continuous $\Bbb{F}_p$-representations of the $m^{\rm th}$ direct product of the absolute 
Galois group of a complete discretely valued field $K$ with mixed characteristic $(0, p)$ whose 
residue field $k_\alpha$ is imperfect and has a finite $p$-basis, i.e. $[k_\alpha: k_\alpha^p]=p^d$ (for some $d\geq 1$). We plan to come back to the question of $p$-adic representations in the future. We expect applications of our results to $p$-adic Hodge theory of products of varieties over $p$-adic fields. To 
state our main Theorem (Theorem \ref{Them5.14}) precisely, we need to review the third author's work on multivariable $(\varphi, \Gamma)$-modules 
\cite{Zabradi2} and his main theorem. 

\subsection{Z\' abr\' adi's work \cite{Zabradi2}}\label{xxsec1.2}
Let $F$ be a finite extension of $\Bbb{Q}_p$ with residue field $k_F$ (which is perfect). 
For a finite set $\Delta$,  let $\mathcal{G}_{{\Bbb{Q}_p}, \Delta}: = \prod_{\alpha  \in \Delta } {{\rm{Gal}}(\overline{{\Bbb{Q}_p}}/{\Bbb{Q}_p})} $ 
denote the direct power of the absolute Galois group of $\Bbb{Q}_p$ indexed by $\Delta$. We 
denote by ${\rm Rep}_{k_F}(\mathcal{G}_{\Bbb{Q}_p, \Delta})$ the category of continuous representations 
of the profinite group $\mathcal{G}_{\Bbb{Q}_p,\Delta}$ on finite dimensional $k_F$-vector spaces. 
For independent commuting variables ${X_\alpha }\, (\alpha  \in \Delta )$,
we write
$$
{E_{\Delta, k_F }}  : =  k_F \left[\kern-0.15em\left[ {{X_\alpha }|\alpha  \in \Delta }
 \right]\kern-0.15em\right][X_\Delta^{-1}],
$$
where $X_\Delta=\prod_{\alpha\in \Delta} X_\alpha$. For each element $\alpha  \in \Delta$,  
we have the partial Frobenius ${\varphi _\alpha }$, and group ${G_{K_\alpha} } \cong {\rm{Gal}}({\Bbb{Q}_p}({\mu _{{p^\infty }}})/{\Bbb{Q}_p})$ 
acting on the variable ${X_\alpha }$ in the usual way and commuting with the other 
variables ${X_\beta }\, (\beta  \in \Delta \backslash \{ \alpha \} )$ in the ring $E_{\Delta, k_F}$ 
(some authors also write $G_{K_\alpha}$ as $\Gamma_\alpha$).  
A $({\varphi_\Delta },  \Gamma_\Delta)$-\textit{module} (or a $(\varphi_\Delta, G_\Delta)$-module)  over ${E_{\Delta, k_F}}$ is a finitely generated 
${E_{\Delta, k_F}}$-module
$D$ together with commuting semilinear actions of the operators ${\varphi _\alpha }$ and groups ${G_{K_\alpha}}\, (\alpha  \in \Delta )$. 
We say that $D$ is \textit{\'{e}tale} if the map id $ \otimes {\varphi _\alpha }:\varphi _\alpha ^*D \longrightarrow  D$
is an isomorphism for all $\alpha  \in \Delta $.  Then the third author independently shows that  
${\rm Rep}_{k_F}(\mathcal{G}_{\Bbb{Q}_p, \Delta})$  is equivalent to the category of \' etale $(\varphi_\Delta, G_\Delta)$-modules over $E_{\Delta, k_F}$.

\subsection{Andreatta's work \cite{Andreatta} and Scholl's work \cite{Scholl}} Let 
us review Scholl's work \cite{Scholl} and parts of Andreatta's work \cite{Andreatta} where 
they work with single variable classical $(\varphi, \Gamma)$-module but over an imperfect 
residue field. Let $K$ be a complete discretely valued field (with uniformizer $p$) 
of mixed characteristic $(0, p)$ with imperfect residue field $k_K$ having a $p$-basis, 
i.e. $[k_K: k_K^p]=p^d$. Let $t_1, t_2, \cdots, t_d\in K$ be a lift of a $p$-basis $\overline{t_1}, \overline{t_2}, \cdots, \overline{t_d}$ 
of $k_K$. Define $
K_\infty=\bigcup_n\, K(\mu_{p^n}, t_1^{1/p^n}, \cdots, t_d^{1/p^n}) 
$, $G_K={\rm Gal}(K_\infty/K)$ and $\mathcal{G}_K={\rm Gal}(\overline{K}/K)$. Note that, in contrast with the perfect residue field case, $G_K$ is not abelian. Scholl \cite{Scholl} and Andreatta \cite{Andreatta} defined a field of norms $E_K$ for $K$, 
and have shown that $E_K\cong k_K((\overline{\pi}))$, where $\varepsilon=\overline{\pi}+1\in E_K$ is compatible system of $p$-power roots of unity in $K_\infty$ (cf. \cite[Section 2.3]{Scholl}). 
Finally, Andreatta \cite[Theorem 7.11]{Andreatta} showed that ${\rm Rep}_{{\Bbb F}_p}(\mathcal{G}_K)$ 
is equivalent to the category of (single variable, i.e. classical) \' etale $(\varphi, G_K)$-mdule over $E_K$.

\subsection{Our work in this paper}\label{xxsec1.4} In this paper, we will extend 
Scholl's and Andreatta's result to the case of multivariable 
$(\varphi_\Delta, G_\Delta)$-modules over an imperfect residue field. 
Precisely speaking, for a finite set $\Delta$ and a collection of possibly distinct fields $K_\alpha$ as above, we define 
$$
G_\Delta=\prod_{\alpha\in \Delta} G_{K_\alpha},
$$
$$
\mathcal{G}_\Delta=\prod_{\alpha\in \Delta} \mathcal{G}_{K_\alpha}, 
$$
and the Laurent series ring
$$
E_\Delta: =  (\bigotimes_{\alpha\in\Delta}k_{K_\alpha}) \left[\kern-0.15em\left[ {{X_\alpha }|\alpha  \in \Delta }
 \right]\kern-0.15em\right][X_\Delta^{-1}].
$$
It should be remarked that for each $\alpha$, $G_{K_\alpha}\cong \Gamma_\alpha\ltimes H_\alpha$, 
where $\Gamma_\alpha\cong {\rm Gal}(K(\mu_{p^\infty})/K)$ and $H_\alpha\cong {\rm Gal}(K_\infty/K(\mu_{p^\infty}))$ 
and so $G_{K_\alpha}$ is a noncommutative $p$-adic Lie group. Extending actions of \cite{Scholl}, we 
provide the ring $E_\Delta$ with the natural actions of partial Frobenius $\varphi_\alpha\, (\alpha\in \Delta)$, 
absolute Frobenius $\varphi_s=\prod \limits_{\alpha\in \Delta}$ and the Galois group $G_\Delta$. 
We define the category $\mathcal{D}^{\rm et}(\varphi_\Delta, G_\Delta, E_\Delta)$ 
of multivariable \' etale $(\varphi_\Delta, G_\Delta)$-modules over $E_\Delta$ in Section \ref{xxsec3.3}. Our main Theorem 
(see Theorem \ref{Them5.14}) is that there is an equivalence of categories between 
${\rm Rep}_{\Bbb{F}_p}(\mathcal{G}_\Delta)$ and  $\mathcal{D}^{\rm et}(\varphi_\Delta, G_\Delta, E_\Delta)$. 
Fortunately, many arguments in the proofs given by the third author in \cite{Zabradi2} (in the perfect residue field case) 
can be generalized and adapted to the case when the residue field is imperfect. Therefore, 
our proofs will mostly follow the line of arguments given in \cite{Zabradi2} with modifications 
when necessary, invoking the results of Andreatta and Scholl (for the single variable, imperfect 
residue field case) and using induction. 


\section{Kummer Towers}\label{xxsec2}
In this Section, we will introduce the Iwasawa theoretic tower that we 
are going to work with. Let $L$ be a complete discretely valued field of mixed characteristic 
$(0, p)$. Suppose that $[k_L: k_L^p]=p^d$, where $k_L$ is the residue field of $L$. 
Let us choose a complete subfield $K$ of $L$ with the same residue field $k_L$ in 
which $p$ is an uniformizer (the existence of such a subfield is proved in \cite[Page 211-212]{Matsumura}). 
Let  $t_1, t_2, \cdots, t_d\in L$ be a lift of a $p$-basis $\overline{t_1}, \overline{t_2}, \cdots, \overline{t_d}$ 
of $k_L$. For $n\geq 1$, define $
K_n= K(\mu_{p^n}, t_1^{1/p^n}, \cdots, t_d^{1/p^n}) 
$, $K_\infty=\bigcup_n K_n$, $L_n=LK_n$ and $L_\infty=LK_\infty$. 
Define the Galois groups $\Gamma_L={\rm Gal}(L(\mu_{p^\infty})/L)$, 
 $G_L={\rm Gal}(L_\infty/L)$, $H_L={\rm Gal}(L_\infty/L(\mu_{p^\infty}))$, 
 $\mathcal{H}_L={\rm Gal}(\overline{K}/L_\infty)$, $\mathcal{G}_L={\rm Gal}(\overline{K}/L)$. We identify $\Gamma_L$ 
 via the quotient map with the subgroup ${\rm Gal}(L_\infty/L_\infty^\prime)$ of $G_L$, where 
$L_\infty=\varinjlim\limits_{n} L(t_1^{1/p^n}, \cdots, t_d^{1/p^n})$.

$$
\xymatrix{
\overline{K}\ar@{-}[d]\ar@{-}@/^/[d]^{\mathcal{H}_L}\ar@/_30pt/@{-}[ddd]_{\mathcal{G}_L}\\
L_\infty\ar@{-}[d]\ar@{-} @/_/[d]_{H_L} \ar@/^35pt/@{-}[dd]^{G_L} \\
L(\mu_{p^\infty})\ar@{-}[d]\ar@{-}@/^/[d]^{\Gamma_L}\\
L}
$$

Note that the cyclotomic character $\chi$ identifies $\Gamma_L$ with an open subgroup of $\Bbb{Z}_p^{\times}$. 
We also have that $G_L\cong \Gamma_L\ltimes H_L$, where $H_L\cong \Bbb{Z}_p^d$ and $G_L$ 
is a non-commutative $p$-adic Lie group of dimension $d+1$.The tower $(K_n)_{n\geq 1}$ is strictly 
deeply ramified in the sense of \cite{Scholl}. By \cite[Section 1.3]{Scholl}, we can say that 
there exists $n_0\in \Bbb{N}$ and $\xi\in \mathcal{O}_{K_{n_0}}$ satisfying $0<|\xi|_K< 1$ such that 
for all $n\geq n_0$, the $p$-power map $\mathcal{O}_{K_{n+1}}/(\xi)\longrightarrow \mathcal{O}_{K_n}/(\xi)$ 
is a surjection. We denote $E^+_K=\varprojlim\limits_{n\geq n_0} \mathcal{O}_{K_n}/(\xi)$, where 
the inverse limit is taken with respect to the $p$-power maps. Then $E^+_K$ is a complete,  discretely  valued
ring of characteristic $p$, independent of $n_0$ and $\xi$ (cf. \cite[Section 2.1]{Zerbes}). Let $E_K$ 
be the fraction field of $E^+_K$. We call $E_K$ the \textit{field of norms} of the tower $K_n$. 
Note that $E_K$ has a natural action of $G_K$ that commutes with the Frobenius operator $\varphi$.

For every finite extension $K^\prime$ of $K$, $E_{K^\prime}$ is a finite separable extension of $E_K$. 
Let $E_K^{\rm sep}=\bigcup_{K^\prime} E_{K^\prime}$. It follows from \cite[Corollary 6.4]{Andreatta} that 
there is an isomorphism of topological groups 
$$
{\rm Gal}(E_K^{\rm sep}/E_K)\cong {\rm Gal}(\overline{K}/K_\infty)=\mathcal{H}_K.
$$
We therefore conclude that $(E_K^{\rm sep})^{\mathcal{H}_K}\cong E_K$. 

\begin{theorem}{\rm \cite[Section 2.3]{Scholl}}\label{Them0.1}
The field of norms $E_K\cong k_K (( \overline{\pi} ))$, where $\varepsilon=1+\overline{\pi}$ is a compatible system of $p$-power roots of unity.
\end{theorem}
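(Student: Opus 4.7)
The plan is to follow the Fontaine--Wintenberger construction of the field of norms, adapted to the strictly deeply ramified imperfect residue field setting as in Scholl's work. The outline has four ingredients: produce a distinguished element $\overline{\pi}\in E_K^+$ from the cyclotomic tower, verify by a valuation computation that it is a uniformizer, identify the residue field of $E_K^+$ with $k_K$, and then combine with $(\overline{\pi})$-adic completeness to conclude.

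First I would fix a compatible system $(\zeta_{p^n})_{n\geq 0}$ with $\zeta_{p^n}\in \mathcal{O}_{K_n}$ a primitive $p^n$-th root of unity satisfying $\zeta_{p^{n+1}}^p=\zeta_{p^n}$. Their reductions mod $\xi$ assemble into an element $\varepsilon\in E_K^+$ because the transition maps in the inverse limit are precisely the $p$-power maps. Set $\overline{\pi}:=\varepsilon-1$. The classical valuation formula $v_p(\zeta_{p^n}-1)=1/(p^{n-1}(p-1))$ shows that $\zeta_{p^n}-1$ is a uniformizer of $\mathcal{O}_{K_n}$ once $n$ is large enough, and more importantly that the valuations of the components of $\overline{\pi}$ grow in the correct way to remain bounded under the normalization
$$v_{E_K^+}(x):=\lim_{n\to\infty} p^n\, v_{K_n}(x_n) \qquad \text{for } x=(x_n).$$
Under this normalization $v_{E_K^+}(\overline{\pi})$ is positive and in fact minimal among positive values attained, so $\overline{\pi}$ is a uniformizer of $E_K^+$.

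Next I would construct a multiplicative section $k_K\hookrightarrow E_K^+$ using the $p$-basis. Any element of $k_K$ can be expanded in the basis $\{\overline{t_1}^{a_1}\cdots \overline{t_d}^{a_d}\}_{0\leq a_i<p}$ over $k_K^p$, and iterating this yields a canonical way to write elements of $k_K$ in terms of $p$-th powers and the $\overline{t_i}$. Lifting each $\overline{t_i}$ to the compatible sequence $(t_i^{1/p^n}\bmod \xi)_n\in E_K^+$ and lifting $p$-th powers via the Frobenius of $E_K^+$, one inductively builds a ring homomorphism $k_K\to E_K^+$ that sections the residue map. Showing this identifies $k_K$ with $E_K^+/(\overline{\pi})$ reduces to proving that every class modulo $\overline{\pi}$ is hit (surjectivity) and that nonzero elements have valuation zero (injectivity). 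Finally, $E_K^+$ is $(\overline{\pi})$-adically complete by construction (the inverse limit of the profinite rings $\mathcal{O}_{K_n}/(\xi)$), so one concludes $E_K^+\cong k_K[\![\overline{\pi}]\!]$, and inverting $\overline{\pi}=X_\Delta$-analogue gives $E_K\cong k_K(\!(\overline{\pi})\!)$.

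The main obstacle is step three: identifying the residue field as exactly $k_K$. In the perfect residue field case one just uses Teichm\"uller lifts to produce a canonical section, but here no such lifts exist in general. One must instead exploit the $p$-basis $\{\overline{t_i}\}$ together with the strict deep ramification of the tower $(K_n)$ to build compatible lifts by hand, carefully tracking how the sequences $(t_i^{1/p^n})_n$ interact with both the Frobenius transition maps and the reduction modulo $\xi$. This bookkeeping is the technical heart of Scholl's argument in \cite[Section 2.3]{Scholl}, and essentially every other step of the proof is formal once this identification is in place.
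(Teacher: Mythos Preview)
The paper does not give a proof of this statement at all: it is stated as a citation to Scholl \cite[Section 2.3]{Scholl} and used as a black box. Your proposal is therefore not something to compare against the paper's argument, since there is none; rather, you have supplied a reasonable outline of Scholl's own proof, which is more than the paper itself attempts. Your sketch is broadly faithful to Scholl's method (construct $\overline{\pi}$ from the compatible system $\varepsilon$, check it is a uniformizer, build a section $k_K\hookrightarrow E_K^+$ from the $p$-basis lifts $(t_i^{1/p^n})_n$, conclude by completeness), and you correctly flag the residue-field identification as the nontrivial step.
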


The field of norms $E_L=(E_K^{sep})^{\mathcal{H}_L}$ for the tower $L_\infty$ is a finite separable extension of $E_K$ of the form $E_L\cong k_K((X))$ for some noncanonical choice of uniformizer $X$. Therefore the action of $G_{L}$ on $E_L$ does not have an intrinsic description in general. We shall further make the following hypothesis on $K$ (hence on $L$, as we have $k_K=k_L$).
\begin{hypothesis}\label{hyp2.2}
The residue field $k_K$ is a finitely generated field extension of $\Bbb{F}_p$.
\end{hypothesis}

Let $\Delta$ be a finite set and we pick a complete discretely valued field $L=L_\alpha$ as above for each $\alpha$. We also allow the cardinality $d_\alpha$ of a $p$-basis to vary for $\alpha\in\Delta$. We put $k_\alpha:=k_{K_\alpha}=k_{L_\alpha}$ and assume that it satisfies HYP \ref{hyp2.2} for all $\alpha\in\Delta$. Further, let $\Bbb{F}_\alpha$ denote the maximal algebraic extension of $\Bbb{F}_p$ contained in $k_\alpha$. Then $\Bbb{F}_\alpha=k_\alpha((X_\alpha))^{G_{L,\Delta}}$ is a finite field and $k_\alpha$ is a finite separable extension of the function field $k_{\alpha,0}:=\Bbb{F}_\alpha(\overline{t_{\alpha,1}},\dots,\overline{t_{\alpha,1}})$ where $\overline{t_{\alpha,1}},\dots,\overline{t_{\alpha,1}}\in k_\alpha$ is a finite $p$-basis. We put $\Bbb{F}_\Delta:=\bigotimes_{\Bbb{F}_p,\alpha\in\Delta}\Bbb{F}_\alpha$ and $k_{\Delta,0}:=\bigotimes_{\Bbb{F}_p,\alpha\in\Delta}k_{\alpha,0}$.

\begin{lemma}\label{lem2.3}
Assume HXP \ref{hyp2.2} for each residue field $k_\alpha$. Then the $|\Delta|$-fold tensor product $k_\Delta:=\bigotimes_{\Bbb{F}_p,\alpha\in\Delta}k_\alpha$ is noetherian and regular (and, in particular, reduced). Further, for each $\alpha\in\Delta$ the relative Frobenius $\varphi_\alpha$ is injective on $k_\Delta$.
\end{lemma}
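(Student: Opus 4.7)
My plan is to realize $k_\Delta$ as a finite étale extension of the "rational" subring $k_{\Delta,0}=\bigotimes_{\mathbb{F}_p,\alpha\in\Delta}k_{\alpha,0}$, verify that $k_{\Delta,0}$ is itself regular and noetherian, and then transport these properties along the étale extension. Injectivity of the partial Frobenius $\varphi_\alpha$ will be a separate short flatness argument over $\mathbb{F}_p$.

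For $k_{\Delta,0}$: each $\mathbb{F}_\alpha/\mathbb{F}_p$ is finite, hence finite separable, so $\mathbb{F}_\Delta=\bigotimes_{\mathbb{F}_p,\alpha}\mathbb{F}_\alpha$ is finite étale over $\mathbb{F}_p$, i.e.\ a finite product of finite fields. Because $k_{\alpha,0}=\mathbb{F}_\alpha(\overline{t_{\alpha,1}},\dots,\overline{t_{\alpha,d_\alpha}})$ is the localization of the polynomial ring $\mathbb{F}_\alpha[\overline{t_{\alpha,i}}]$ at its nonzero elements, and since tensor product commutes with localization and with polynomial algebras over the various $\mathbb{F}_\alpha$, the ring $k_{\Delta,0}$ identifies with a localization of $\mathbb{F}_\Delta[\overline{t_{\alpha,i}}\mid \alpha\in\Delta,\,1\le i\le d_\alpha]$. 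The latter is a finite product of polynomial rings over finite fields; since localization and polynomial extensions preserve regularity and noetherianness, so does $k_{\Delta,0}$.

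Next, fix an enumeration $\alpha_1,\dots,\alpha_n$ of $\Delta$. Using associativity of the tensor product and the natural embeddings $k_{\alpha_i,0}\hookrightarrow k_{\Delta,0}$ picking out the $\alpha_i$-factor, one obtains the identification
\[
k_\Delta\cong k_{\Delta,0}\otimes_{k_{\alpha_1,0}}k_{\alpha_1}\otimes_{k_{\alpha_2,0}}k_{\alpha_2}\otimes\cdots\otimes_{k_{\alpha_n,0}}k_{\alpha_n},
\]
since each successive base change merely replaces the $k_{\alpha_i,0}$-factor inside the big tensor product by $k_{\alpha_i}$. By the setup of Section~\ref{xxsec2}, each map $k_{\alpha_i,0}\hookrightarrow k_{\alpha_i}$ is a finite separable extension of fields, i.e.\ finite étale, and finite étale morphisms are stable under base change and composition. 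Hence $k_{\Delta,0}\to k_\Delta$ is finite étale, so $k_\Delta$ is regular and noetherian; regularity forces reducedness as well.

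Finally, for the partial Frobenius, write $k_\Delta\cong k_\alpha\otimes_{\mathbb{F}_p}M$ with $M:=\bigotimes_{\mathbb{F}_p,\beta\ne\alpha}k_\beta$. Under this decomposition $\varphi_\alpha$ is precisely $\mathrm{Frob}_{k_\alpha}\otimes\mathrm{id}_M$. Since $k_\alpha$ is a field of characteristic $p$ its Frobenius is injective, and since $\mathbb{F}_p$ is a field the $\mathbb{F}_p$-module $M$ is flat, so tensoring preserves the injection. Hence $\varphi_\alpha$ is injective on $k_\Delta$. The only delicate point is the iterated base-change identification in the previous paragraph; once that is in place, the conclusions are immediate from standard stability properties of étale morphisms.
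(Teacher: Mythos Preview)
Your proof is correct, but it takes a different route from the paper's. The paper argues more directly: noetherianness comes from observing that $k_\Delta$ is a localization of a finitely generated $\mathbb{F}_p$-algebra; injectivity of $\varphi_\alpha$ is the same flatness argument you give (and injectivity of $\varphi_s=\prod_\alpha\varphi_\alpha$ then yields reducedness); finally, regularity is obtained by citing a general result of Tousi--Yassemi on tensor products of regular algebras over a perfect field. By contrast, you exploit the explicit structure coming from HYP~\ref{hyp2.2}: you realize $k_{\Delta,0}$ concretely as a localization of a polynomial ring over the \'etale $\mathbb{F}_p$-algebra $\mathbb{F}_\Delta$, and then pass to $k_\Delta$ along the finite \'etale map $k_{\Delta,0}\to k_\Delta$ built from the finite separable extensions $k_{\alpha,0}\hookrightarrow k_\alpha$. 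Your approach is more self-contained---it avoids the external reference and makes visible exactly how the hypothesis on finite $p$-bases is used---while the paper's is shorter and more conceptual. The iterated base-change identification you flag as ``delicate'' is fine: writing $k_{\Delta,0}=k_{\alpha_i,0}\otimes_{\mathbb{F}_p}\bigl(\bigotimes_{\beta\neq\alpha_i}k_{\beta,0}\bigr)$ makes each step a routine associativity computation.
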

\begin{proof}
The ring $k_\Delta$ is the localization of a finitely generated $\Bbb{F}_p$-algebra, therefore it is noetherian. Since $\Bbb{F}_p$ is a field, any $\Bbb{F}_p$-module is flat. Now $\varphi_\alpha\colon k_\alpha\to k_\alpha$ is injective and on $k_{\Delta\setminus\{\alpha\}}$ it is the identity, therefore $\varphi_\alpha$ is also injective on $k_\Delta=k_\alpha\otimes_{\Bbb{F}_p}k_{\Delta\setminus\{\alpha\}}$. In particular, the absolute Frobenius $\varphi_s=\prod_{\alpha\in\Delta}\varphi_\alpha$ is also injective on $k_\Delta$, ie.\ $k_\Delta$ is reduced. The statement on the regularity follows from \cite[Theorem 1.6(c),(e)]{TousiYassemi} since $\Bbb{F}_p$ is perfect and $k_\Delta$ is noetherian.
\end{proof}

Since $\Bbb{F}_\Delta$ is a tensor product of finite fields, it is finite \'etale algebra over $\Bbb{F}_p$. Moreover, it has primitive idempotents $b_1,\dots, b_\ell\in \Bbb{F}_\Delta$ with $1=b_1+\cdots+b_\ell$ and $b_j\Bbb{F}_\Delta\cong \Bbb{F}_{p^f}$ where $f=\gcd(|\Bbb{F}_\alpha:\Bbb{F}_p| \mid \alpha\in\Delta)$, $1\leq j\leq \ell$. Note that for each $\alpha\in\Delta$ and $1\leq j\leq \ell$ the element $\varphi_\alpha(b_j)$ is also a primitive idempotent in $\Bbb{F}_\Delta$ and $\varphi_s(b_j)=b_j^p=b_j$. The quotient monoid $\Phi:=(\prod_{\alpha\in\Delta}\varphi_\alpha^{\Bbb{N}})/\varphi_s^{\Bbb{N}}$ is a group acting on the set of primite idempotents.

\begin{lemma}\label{Lem2.4}
The group $\Phi$ acts transitively on the set $\{b_1,\dots,b_\ell\}$ of primitive idempotents in $\Bbb{F}_\Delta$.
\end{lemma}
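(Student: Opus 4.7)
My plan is to translate the statement via the \'etale-algebra/Galois-theory dictionary into an elementary transitivity assertion about translations on a finite abelian group. Let $f_\alpha:=[\Bbb{F}_\alpha:\Bbb{F}_p]$ and fix an algebraic closure $\overline{\Bbb{F}}_p$. Since $\Bbb{F}_\Delta$ is a finite \'etale $\Bbb{F}_p$-algebra, extending scalars gives $\Bbb{F}_\Delta\otimes_{\Bbb{F}_p}\overline{\Bbb{F}}_p\cong\prod_{x\in X}\overline{\Bbb{F}}_p$ indexed by
\[
X:=\mathrm{Hom}_{\Bbb{F}_p\text{-alg}}(\Bbb{F}_\Delta,\overline{\Bbb{F}}_p)\cong\prod_{\alpha\in\Delta}\mathrm{Hom}(\Bbb{F}_\alpha,\overline{\Bbb{F}}_p).
\]
Each factor is a torsor for $\mathrm{Gal}(\Bbb{F}_\alpha/\Bbb{F}_p)\cong\Bbb{Z}/f_\alpha\Bbb{Z}$, so choosing a basepoint in each factor identifies $X$ with $\prod_\alpha\Bbb{Z}/f_\alpha\Bbb{Z}$ in such a way that the arithmetic Frobenius of $\overline{\Bbb{F}}_p/\Bbb{F}_p$ acts by the diagonal translation $(1,\ldots,1)$. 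Galois descent for finite \'etale algebras then identifies $\{b_1,\ldots,b_\ell\}$ with the cosets of $\Bbb{Z}\cdot(1,\ldots,1)$ in $X$.

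Next I would unwind the action of the partial Frobenii on $X$: a direct computation from the definition $\varphi_\alpha(\cdots\otimes a_\beta\otimes\cdots)=\cdots\otimes a_\alpha^p\otimes\cdots$ shows that $\varphi_\alpha$ acts on $X$ by translation by the $\alpha$th standard basis vector. Hence $\prod_\alpha\varphi_\alpha^{\Bbb{N}}\cong\Bbb{N}^\Delta$ acts on $X$ by translations; since $\varphi_s$ acts diagonally and therefore trivially on orbits, the induced action of $\Phi$ on the set of primitive idempotents coincides with the natural action of the group $\Bbb{Z}^\Delta/\Bbb{Z}(1,\ldots,1)$ on $X/\Bbb{Z}(1,\ldots,1)$, obtained by formally inverting the diagonal submonoid of $\Bbb{N}^\Delta$.

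For transitivity, given two primitive idempotents represented by $x,y\in X$, I would lift $y-x\in\prod_\alpha\Bbb{Z}/f_\alpha\Bbb{Z}$ to some $(m_\alpha)_\alpha\in\Bbb{Z}^\Delta$ and then set $n_\alpha:=m_\alpha+N$ for a common $N\in\Bbb{N}$ large enough that all $n_\alpha\geq 0$; adding $N(1,\ldots,1)$ does not change the class in $\Phi$. The element $\prod_\alpha\varphi_\alpha^{n_\alpha}\in\Phi$ then carries the orbit of $x$ to the orbit of $y$. The argument is essentially bookkeeping; the only conceptual point I would flag is the verification that $\Phi$ really is a group (isomorphic to $\Bbb{Z}^\Delta/\Bbb{Z}(1,\ldots,1)$), since the transitivity is ultimately equivalent to the surjectivity of the natural map $\Bbb{Z}^\Delta\twoheadrightarrow X=\prod_\alpha\Bbb{Z}/f_\alpha\Bbb{Z}$.
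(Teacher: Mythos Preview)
Your argument is correct and takes a genuinely different route from the paper's. The paper proceeds by induction on $|\Delta|$ to reduce to the case $|\Delta|=2$, and then argues directly: writing $\Bbb{F}_{p^m}=\Bbb{F}_p[X]/(g(X))$ and factoring $g=\prod_{i=1}^{f}g_i$ over $\Bbb{F}_{p^n}$ (with $f=\gcd(n,m)$), one sees that $\varphi_\beta$ permutes the factors $g_i$ cyclically, so already a single partial Frobenius acts transitively on the idempotents. Your approach instead passes through the \'etale dictionary to identify the set of primitive idempotents with the finite abelian group $\bigl(\prod_\alpha\Bbb{Z}/f_\alpha\Bbb{Z}\bigr)/\langle(1,\dots,1)\rangle$ and the $\Phi$-action with the translation action of $\Bbb{Z}^\Delta/\Bbb{Z}(1,\dots,1)$, after which transitivity is the trivial observation that $\Bbb{Z}^\Delta\twoheadrightarrow\prod_\alpha\Bbb{Z}/f_\alpha\Bbb{Z}$ is surjective. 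Your method is more conceptual and uniform in $|\Delta|$, and it makes explicit the group structure of $\Phi$ and its identification with $\Bbb{Z}^\Delta/\Bbb{Z}(1,\dots,1)$; the paper's method is more hands-on and avoids invoking Galois descent, at the cost of leaving the inductive step somewhat implicit. One harmless detail: strictly speaking $\varphi_\alpha$ acts on the idempotents via pullback along $\varphi_\alpha^*$, so the induced translation on orbits is by $-e_\alpha$ rather than $+e_\alpha$; this sign is irrelevant for transitivity.
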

\begin{proof}
By induction on $|\Delta|$ we are reduced to the case $\Delta=\{\alpha,\beta\}$ has two elements. Put $p^n:=|\Bbb{F}_\alpha|$, $p^m:=|\Bbb{F}_\beta|$, and $f:=\gcd(n,m)$. Writing $\Bbb{F}_{p^m}=\Bbb{F}_p[X]/(g(X))$ for some irreducible monic polynomial $g(X)\in \Bbb{F}_p[X]$ we may write $g(X)=\prod_{i=1}^{f}g_i(X)$ over $\Bbb{F}_{p^n}$ so we have $\Bbb{F}_{p^n}\otimes_{\Bbb{F}_p}\Bbb{F}_{p^m}=\bigoplus_{i=1}^f\Bbb{F}_{p^n}[X]/(g_i(X))$ where $\varphi_\beta$ acts trivially on $\Bbb{F}_{p^n}$ and satisfies $\varphi_\beta(g_i)=g_{i+1}$ with the convention $g_{f+1}=g_1$.
\end{proof}

\section{Multivariable $(\varphi, \Gamma)$-modules}\label{xxsec3}

\subsection{} Let $\Delta$ be a finite set (which can be simple roots in the Lie algebra of a reductive group 
over $\Bbb{Z}_p$) and $(L_\alpha)_{\alpha\in\Delta}$ be a collection of complete discretely valued fields with residue fields $k_\alpha$ such that $k_\alpha$ is a finitely generated extension of $\Bbb{F}_p$ with finite $p$-basis $\overline{t_{\alpha,1}},\dots,\overline{t_{\alpha,1}}$. We further choose a complete subfield $K_\alpha\leq L_\alpha$ in which $p$ is a uniformizer and has the same residue field $k_\alpha$. Let us define 

\begin{enumerate}
\item[i)] $G_{L, \Delta}:=\prod \limits_{\alpha\in \Delta} G_{L_\alpha}$,
\item[ii)] $\mathcal{G}_{L, \Delta}:=\prod \limits_{\alpha\in \Delta} \mathcal{G}_{L_\alpha}$.
\end{enumerate}
We denote ${\rm Rep}_{\Bbb{F}_p}(\mathcal{G}_{L, \Delta})$ the category of continuous representations of the 
profinite group $\mathcal{G}_{L, \Delta}$ on finite dimensional $\Bbb{F}_p$-vector spaces. (In the future, 
$G_{K, \Delta}$ and $\mathcal{G}_{K, \Delta}$  will simply be denoted as $G_\Delta$ and $\mathcal{G}_\Delta$, dropping 
the subscript $K$.)

\subsection{Some Laurent series rings}\label{xxsec3.2} 
Consider the Laurent series $E_{L,\Delta}=E_{L,\Delta}^+[X_\Delta^{-1}]$ where $$E^+_{L,\Delta}=k_\Delta \llbracket X_\alpha|  \alpha\in \Delta \rrbracket=(\bigotimes_{\Bbb{F}_p,\alpha\in\Delta}k_\alpha)\llbracket X_\alpha|  \alpha\in \Delta \rrbracket$$
is the completed tensor product of $E_\alpha^+\, (\cong k_\alpha \llbracket X_\alpha \rrbracket)$ over $\Bbb{F}_p$ for all $\alpha\in \Delta$. 
Here $E^+_\alpha$ is the ring of integers of the field of norms $E_\alpha\, (\cong k_\alpha(( X_\alpha )))$ 
corresponding to $\alpha$. Here $X_\Delta:=\prod \limits_{\alpha\in \Delta} X_\alpha\in E^+_\Delta$. For each 
$\alpha$, we define the action of the partial Frobenius $\varphi_\alpha$ and the group $G_{L_\alpha}$ as follows (cf. \cite[Page 707, Section 2.3]{Scholl}). 
$$
{\varphi _\alpha }({X_\beta }): =
\begin{cases} 
{X_\beta } & {\rm{if}} \ \beta  \in \Delta \backslash \{ \alpha \},  \\{(1+{X_\alpha })^p}-1 = X_\alpha ^p& {\rm{if}} \ \beta =\alpha.
\end{cases} \eqno{(3.2.1)}
$$
$\varphi_\alpha$ acts on the coefficients in $k_\alpha$ as the $p$-th power map and as identity on $k_\beta$ for $\beta\in\Delta\setminus\{\alpha\}$. By the construction of the field of norms \cite{Scholl}, the group $G_{L_\alpha}$ acts on the ring $k_\alpha((X_\alpha))$ and we extend this action to $E_{L,\Delta}$ by acting trivially on $k_\beta((X_\beta))$ ($\beta\in\Delta\setminus\{\alpha\}$). We only describe the action of $G_{L_\alpha}$ on $E_{L,\Delta}$ intrinsically in case $L_\alpha=K_\alpha$. We put $\overline{\pi}_\alpha:=\varepsilon-1\in k_\alpha((X_\alpha))\subset \varprojlim\mathcal{O}_{L_n}/(\xi)$ where $\varepsilon=(\varepsilon_n)_{n\geq 0}$ is a compatible system of $p$-power roots of unity (with $\varepsilon_0=1\neq\varepsilon_1$). In case $L_\alpha=K_\alpha$ is unramified, we have $k_\alpha((X_\alpha))=k_\alpha((\overline{\pi}_\alpha))$, but in general $k_\alpha((X_\alpha))$ is a finite separable extension of $k_\alpha((\overline{\pi}_\alpha))$ of degree $e_L$ which is the absolute ramification index of $L$. Action of $G_{K_\alpha}=G_\alpha\cong \Gamma_\alpha\ltimes H_\alpha$; $H_\alpha\cong \Bbb{Z}_p^d$ on $E_\Delta:=E_{K,\Delta}$
can be described as follows. For any $\gamma_{\alpha}\in \Gamma_\alpha$ we have
$$
{\gamma _{\alpha} }({\overline{\pi}_\beta }): = \begin{cases} {\overline{\pi}_\beta } & {\rm{if}} \ \beta  \in \Delta \backslash \{ \alpha \},  \\ (1+{\overline{\pi}_\alpha })^{\chi(\gamma_\alpha)} - 1& {\rm{if}} \ \beta  = \alpha. \end{cases}\eqno{(3.2.2)}
$$
$\Gamma_\alpha$ acts as identity on $k_\alpha$. Let $\underline{b}_\alpha$ be the image of $\delta_{\alpha, \underline{b}}\in H_\alpha$ 
in $\Bbb{Z}_p^d$ and let $b_{\alpha, i}$ be the $i$-th component of $\underline{b}_\alpha$. Then, we have
$$
\delta_{\alpha, \underline{b}}(\overline{\pi}_\beta): =\overline{\pi}_\beta\, \,  \text{for all}\, \, \, \beta\, \, (\text{equal to}\, \,  \alpha \, \, \text{and}\, \, \text{also not equal to} \, \, \alpha), \eqno{(3.2.3)}
$$
$$
\delta_{\alpha, \underline{b}}(\overline{t_{\alpha,i}}): =(1+\overline{\pi}_\alpha)^{b_{\alpha, i}} \overline{t_{\alpha,i}}\, \,  \text{for all}\, \, 1\leq i \leq d_\alpha\ , \eqno{(3.2.4)}
$$
and $\delta_{\alpha, \underline{b}}$ acts as identity on $k_\beta$ for $\beta\neq\alpha$. Note that such an automorphism 
$\delta_{\alpha, \underline{b}}$ of $E_\Delta^+$ is unique which is easy to see from the case $|\Delta|=1$ in \cite[Page 707]{Scholl}.

Note that the absolute Frobenius $\varphi_s: E_{L,\Delta}^+\longrightarrow E^+_{L,\Delta}$  
equals the composite $\prod \limits_{\alpha\in \Delta}\varphi_\alpha$ of the partial Frobenii. Further, the actions of $\varphi_\alpha$ ($\alpha\in\Delta$) and $G_\beta$ ($\beta\in\Delta$) all commute with each other (even though the individual factors $G_\beta$ are non-abelian).



The ring $E_{L,\Delta}^+$ is noetherian and reduced by Lemma \ref{lem2.3}.

\subsection{Multivariable $(\varphi_\Delta, G_{L,\Delta})$-modules}\label{xxsec3.3}
By a $(\varphi_\Delta, G_{L,\Delta})$-module over $E_{L,\Delta}$, we mean a finitely 
generated module $D$ over $E_{L,\Delta}$ together with commuting semilinear actions of $\varphi_\alpha$
and the Galois groups $G_{L_\alpha}$ for all $\alpha\in \Delta$. By an \' etale $(\varphi_\Delta, G_{L,\Delta})$-module over $E_{L,\Delta}$, we mean a
$(\varphi_\Delta, G_{L,\Delta})$-module $D$ such that the maps
$$
{\rm{id}} \otimes {\varphi_{\alpha}}:\varphi _{\alpha}^\ast D: = E_{L,\Delta} \mathop \bigotimes \limits_{E_{L,\Delta}, \varphi _{\alpha}} D \longrightarrow D,
$$
are isomorphisms for all $\alpha\in\Delta$. 

We are going to show that ${\rm Rep}_{\Bbb{F}_p}(\mathcal{G}_{L,\Delta})$ is equivalent to the category of \' etale 
$(\varphi_\Delta, G_{L,\Delta})$-modules over $E_{L,\Delta}$; the later category we denote by 
$\mathcal{D}^{\rm et}(\varphi_\Delta, G_{L,\Delta}, E_{L,\Delta})$.

\section{Integrality Properties}\label{xxsec4}

\subsection{Definition and projectivity}\label{xxsce4.1}

In this Section our goal is to show that any object in the category  $\mathcal{D}^{\rm et}(\varphi_\Delta, G_{L,\Delta}, E_{L,\Delta})$ is stably free as a module over $E_{L,\Delta}$.

\begin{lemma}\label{Lem4.1}
There exists a $G_{L,\Delta}$-equivariant injective resolution of $E_{L,\Delta}$ as a module over itself.
\end{lemma}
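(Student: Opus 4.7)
The plan is to interpret the category $\mathcal{M}$ of $E_{L,\Delta}$-modules equipped with a compatible semilinear action of $G_{L,\Delta}$ as a Grothendieck abelian category, then invoke Grothendieck's theorem to extract enough injectives, and finally assemble the resolution by the usual iterated embedding procedure. The underlying point is that no special structural feature of $E_{L,\Delta}$ is required beyond the fact that $\mathcal{M}$ is a module category of a suitably constructed ring.

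First I would verify the Grothendieck axioms for $\mathcal{M}$. Kernels and cokernels are computed on the underlying $E_{L,\Delta}$-modules and inherit the induced semilinear $G_{L,\Delta}$-action, so $\mathcal{M}$ is abelian. Arbitrary small direct sums exist with componentwise $G_{L,\Delta}$-action, and filtered colimits are exact because the forgetful functor $\mathcal{M} \to E_{L,\Delta}\text{-Mod}$ preserves and reflects exactness as well as filtered colimits while $E_{L,\Delta}\text{-Mod}$ satisfies AB5. For a generator one realizes $\mathcal{M}$ as (continuous) modules over the completed skew group ring $\varprojlim_U (E_{L,\Delta} \rtimes G_{L,\Delta}/U)$, where $U$ runs over open normal subgroups of $G_{L,\Delta}$; equivalently, the direct sum $\bigoplus_U E_{L,\Delta}[G_{L,\Delta}/U]$, with semilinear $G_{L,\Delta}$-action inherited from left translation on the cosets, serves as a generator inside $\mathcal{M}$ directly.

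Given these axioms, Grothendieck's theorem supplies enough injectives in $\mathcal{M}$. The resolution is then built inductively: embed $E_{L,\Delta}$ into an injective $I^0 \in \mathcal{M}$, set $C^0 := I^0 / E_{L,\Delta}$, embed $C^0$ into an injective $I^1$, and continue, splicing the connecting maps to produce an exact complex $0 \to E_{L,\Delta} \to I^0 \to I^1 \to \cdots$ in $\mathcal{M}$, which is by construction a $G_{L,\Delta}$-equivariant injective resolution.

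The step I expect to require most care is the precise treatment of continuity. The profinite group $G_{L,\Delta}$ should act continuously with respect to the natural topology on the modules in question, and one must check both that the proposed generator really detects exactly the continuous subcategory and that the injective objects produced by the abstract Grothendieck machinery remain inside it (rather than escaping to the larger category of all semilinear modules). This is standard in the framework of continuous Galois cohomology in the spirit of Serre, and once the category is pinned down the construction of injectives follows without further incident.
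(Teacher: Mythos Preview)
Your argument produces a resolution by injectives of the equivariant category $\mathcal{M}$, but the lemma requires the terms to be injective \emph{as $E_{L,\Delta}$-modules}: in the application (Lemma~\ref{Lem4.3}, following \cite[Proposition~2.2]{Zabradi2}) one computes ${\rm Ext}^n_{E_{L,\Delta}}(D,E_{L,\Delta})$ from such a resolution and then exploits the inherited $G_{L,\Delta}$-action on these Ext groups via Lemma~\ref{Lem4.2}. An injective object of $\mathcal{M}$ forgets to an injective $E_{L,\Delta}$-module only if the forgetful functor $\mathcal{M}\to E_{L,\Delta}\text{-Mod}$ has an exact left adjoint. If you take $\mathcal{M}$ to be \emph{all} semilinear $G_{L,\Delta}$-modules (i.e.\ modules over the discrete skew group ring $E_{L,\Delta}\rtimes G_{L,\Delta}$, which is free as a right $E_{L,\Delta}$-module), this is automatic and your plan can be made to work; but your proposal explicitly passes to continuous/smooth representations and a completed group algebra, and in that subcategory preservation of injectivity under the forgetful functor is precisely the point that can fail. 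So the ``care'' you flag about continuity is not a detail to be verified afterwards but the step at which the argument, as you have set it up, breaks.

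The paper sidesteps all of this by taking the Cousin complex of $E_{L,\Delta}$. Since $E_{L,\Delta}$ is regular (hence Gorenstein), each term---a direct sum of local cohomology modules indexed by primes of a fixed height---is already an injective $E_{L,\Delta}$-module, and the complex is automatically $G_{L,\Delta}$-equivariant because ring automorphisms permute primes while preserving their height. This is functorial and yields a finite resolution, neither of which the abstract Grothendieck argument provides.
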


\begin{proof}
This follows from a general result that the Cousin complex provides an injective resolution for 
spectrum of noetherian rings with finite injective dimension (these are the so-called Gorenstein rings,  
we recommend the reader to refer to \cite[Remark before Proposition 3.4 in Page 249]{Hartshorne} or \cite{Sharp}). Note that $E_{L,\Delta}$ is the localization of a $|\Delta|$-variable power series ring over the regular ring $k_\Delta$, therefore it is regular (and in particular, Gorenstein).
Hence the Cousin complex 
$$
0 \xrightarrow[]{\hspace{12pt}}  {E_{L,\Delta} } \xrightarrow[d_{-1}]{\hspace{12pt}} \mathop \bigoplus_{\mathfrak{p} \in {\rm{Spec}}({E_{L,\Delta} }), {\rm ht}\mathfrak{p}=0}{(E_{L,\Delta})_{\mathfrak{p}}} \xrightarrow[d_0]{\hspace{12pt}}   \cdots  \xrightarrow[]{\hspace{12pt}}  \mathop  \bigoplus_{\mathfrak{p} \in {\rm{Spec}}({E_{L,\Delta} }), {\rm ht}\mathfrak{p}=r}{{\rm Coker}(d_{r-2})_{\mathfrak{p}}} \xrightarrow[d_r]{\hspace{12pt}}   \cdots 
$$
is an injective resolution of $E_{L,\Delta}$. This resolution is $G_{L,\Delta}$-equivariant because the automorphisms preserve the height of a prime ideal.  
\end{proof}

Recall $\Bbb{F}_\Delta$ has primitive idempotents $b_1,\dots,b_\ell\in \Bbb{F}_\Delta$ with $1=b_1+\cdots+b_\ell$ and $b_j\Bbb{F}_\Delta\cong \Bbb{F}_{p^f}$ where $f=\gcd(|\Bbb{F}_\alpha:\Bbb{F}_p| \mid \alpha\in\Delta)$, $1\leq j\leq \ell$.

\begin{lemma}\label{Lem4.2}
Let $I$ be a $G_{L,\Delta}$-invariant ideal of $E_{L,\Delta}$. Then we have $I=(I\cap \Bbb{F}_\Delta)E_{L,\Delta}$. 
\end{lemma}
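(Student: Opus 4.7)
The plan is to proceed by induction on $|\Delta|$.

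For the base case $|\Delta|=1$, the ring $E_{L,\Delta}=E_{L_\alpha}\cong k_\alpha((X_\alpha))$ is a field, so $I\in\{(0),E_{L_\alpha}\}$ and in either case the equality $I=(I\cap \Bbb{F}_\alpha)E_{L_\alpha}$ holds trivially.

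For $|\Delta|\geq 2$, I first reduce to the case where $\Bbb{F}_\Delta$ is a field. The primitive idempotents $b_1,\ldots,b_\ell$ of $\Bbb{F}_\Delta$ are algebraic over $\Bbb{F}_p$ and hence are fixed by $G_{L,\Delta}$ (which preserves the algebraic subfield $\Bbb{F}_\alpha\subseteq k_\alpha$ inside each residue field). This yields $G_{L,\Delta}$-invariant direct sum decompositions $E_{L,\Delta}=\bigoplus_{j=1}^\ell b_jE_{L,\Delta}$ and $I=\bigoplus_j b_jI$, and the desired identity $I=(I\cap\Bbb{F}_\Delta)E_{L,\Delta}$ is compatible with this decomposition. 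It therefore suffices to prove the statement for each $b_jI\subseteq b_jE_{L,\Delta}$ in isolation, where $b_j\Bbb{F}_\Delta\cong\Bbb{F}_{p^f}$ is a field. In this reduced setting the lemma amounts to showing that every non-zero $G_{L,\Delta}$-invariant ideal $I$ of $E_{L,\Delta}$ equals the whole ring---equivalently, contains a non-zero element of $\Bbb{F}_\Delta$.

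To prove the reduced claim, given $0\neq f\in I$ I would iteratively simplify $f$ via Galois averaging. Pick $\alpha\in\Delta$ and expand $f=\sum_{n\geq n_0}f_nX_\alpha^n$ with $f_n\in k_\Delta\llbracket X_\beta:\beta\neq\alpha\rrbracket[\prod_{\beta\neq\alpha}X_\beta^{-1}]$. For $\gamma\in\Gamma_{L_\alpha}$ one has $\gamma(X_\alpha^n)=X_\alpha^n u_\gamma^n$ with $u_\gamma\equiv\chi(\gamma)\pmod{X_\alpha}$, so modulo higher $X_\alpha$-order, $\gamma$ acts on the $X_\alpha^n$-term by the scalar $\chi(\gamma)^n$. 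A Vandermonde-type argument using several $\gamma_i$ with pairwise distinct values $\chi(\gamma_i)\in\Bbb{F}_p^\times$ extracts individual $X_\alpha$-degree components of $f$ modulo higher-order terms, producing new elements of $I$. For the ``wild'' residue class I would use $\gamma$ with $\chi(\gamma)\in 1+p\Bbb{Z}_p$, whose action on $X_\alpha$ in characteristic $p$ is an explicit derivation-like operator (e.g., when $\chi(\gamma)=1+p$ one computes $\gamma(X_\alpha)=(1+X_\alpha)^{1+p}-1=X_\alpha+X_\alpha^p(1+X_\alpha)$). Since $X_\alpha$ is a unit in $E_{L,\Delta}$, dividing by the appropriate power reduces the $X_\alpha$-support. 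Combined with a parallel use of the $H_{L_\alpha}$-action---which sends $\overline{t_{\alpha,i}}\mapsto(1+X_\alpha)^{b_{\alpha,i}}\overline{t_{\alpha,i}}$ and so lets one strip off the $k_\alpha$-part of the coefficients---one eventually obtains a non-zero element of $I\cap E_{L,\Delta\setminus\{\alpha\}}$, to which the inductive hypothesis applies, yielding the required non-zero element of $I\cap\Bbb{F}_\Delta$.

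The main technical obstacle is that Galois averaging only isolates leading coefficients modulo higher-order terms, so an iteration or convergence argument is required to actually land in $\Bbb{F}_\Delta$. Moreover, because $G_{L_\alpha}=\Gamma_{L_\alpha}\ltimes H_{L_\alpha}$ is non-commutative and the $H_{L_\alpha}$-action on the $p$-basis of $k_\alpha$ itself involves $X_\alpha$, the $\Gamma$- and $H$-extractions cannot be performed fully independently and must be carefully interleaved.
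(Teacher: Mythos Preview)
Your proposal outlines a plausible strategy but leaves the central technical step unresolved, as you yourself acknowledge: the interleaving of the $\Gamma$- and $H$-extractions, together with the ``iteration or convergence argument'' needed to actually land in $\Bbb{F}_\Delta$, is precisely where the work lies, and it is not carried out. There is also a subtler problem: the explicit formulas you invoke for the $\Gamma_{L_\alpha}$- and $H_{L_\alpha}$-actions (e.g.\ $\gamma(X_\alpha)=(1+X_\alpha)^{\chi(\gamma)}-1$ and $h(\overline{t_{\alpha,i}})=(1+X_\alpha)^{b_{\alpha,i}}\overline{t_{\alpha,i}}$) are only available when $L_\alpha=K_\alpha$; for general $L_\alpha$ the action on $k_\alpha((X_\alpha))$ has no such intrinsic description, so your Vandermonde extraction and the explicit $H$-stripping are not immediately justified. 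Finally, your Vandermonde step needs several $\gamma$ with distinct $\chi(\gamma)\bmod p$, but the image of $\chi$ on $\Gamma_{L_\alpha}$ may lie entirely in $1+p\Bbb{Z}_p$.

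The paper avoids all of this by \emph{decoupling} the two descents rather than interleaving them, and it does not use induction on $|\Delta|$. First, using only the $\Gamma_\Delta$-action (specifically elements with $\chi(\gamma)\in 1+p^t\Bbb{Z}_p$ for $t\gg0$), one proves $I=(I\cap k_\Delta)E_{L,\Delta}$; this is exactly the argument of \cite[Proposition~2.1, Lemma~2.2]{Zabradi1}, which nowhere requires the coefficient ring to be a field. Only then does one bring in $H_{L,\Delta}$, and now entirely inside $k_\Delta$: pick $0\neq\lambda=\sum_{i=1}^r u_i\otimes v_i\in b_jI\cap k_\Delta$ with $r$ minimal and $u_1=1$. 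If some $u_{i_0}\notin\Bbb{F}_\alpha$, choose $h\in H_{L_\alpha}$ with $h(u_{i_0})\neq u_{i_0}$; then $h(\lambda)-\lambda=\sum_{i\ge 2}(h(u_i)-u_i)\otimes v_i\in b_jI$ is nonzero, and---crucially, using the already-established equality $I=(I\cap k_\Delta)E_{L,\Delta}$---each $X_\alpha$-coefficient of $h(\lambda)-\lambda$ lies in $b_jI\cap k_\Delta$ and is a sum of at most $r-1$ elementary tensors, contradicting minimality of $r$. Repeating over all $\alpha\in\Delta$ forces $\lambda\in\Bbb{F}_\Delta$. The point is that once the first step places you inside $k_\Delta$, the $H$-action only produces $X_\alpha$-terms whose individual coefficients you can immediately project back into $I\cap k_\Delta$, so no convergence or interleaving argument is needed.
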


\begin{proof}
Suppose that $I$ is a nontrivial $G_{L,\Delta}$-invariant ideal of $E_{L,\Delta}$. Then $I$ is also 
$\Gamma_\Delta$-invariant. We first show $I=(I\cap k_\Delta )E_{L,\Delta}$. This is completely analogous to the proof of \cite[Proposition 2.1, Lemma 2.2]{Zabradi1}. 
The assumption that the ring $\kappa$ of loc. cit. is a finite field is not used in the proof of \cite[Lemma 2.2]{Zabradi1}: one only 
needs that $E_{L,\Delta}$ is noetherian. Further, $t$ can be chosen large enough to ensure $1+p^t$ lies in the image of 
the character $\chi\colon \Gamma_{L_\alpha}\to\Bbb{Z}_p^\times$ for all $\alpha\in\Delta$. In our argument $\kappa$ is not 
a field, but the ring $k_\Delta$ in which case one cannot conclude at the end of the proof of Proposition 2.1 that $1\in I$ but only that $I=(I\cap k_\Delta )E_{L,\Delta}$.

We use the action of $H_{L,\Delta}$ in order to further descend to $\Bbb{F}_\Delta$. Fix $\alpha\in\Delta$, $j\in \{1,\dots,\ell\}$, and pick an element $0\neq\lambda=\sum_{i=1}^ru_i\otimes v_i\in b_jI\cap k_\Delta$ where $u_1,\dots,u_r\in k_\alpha$ and $v_1,\dots,v_r\in k_{\Delta\setminus\{\alpha\}}$. Suppose $r$ is minimal and $u_1=1$ (possibly replacing $\lambda$ with $u_1^{-1}\lambda$), i.e.\ no nonzero element in $b_jI\cap k_\Delta$ can be written as a sum of at most $r-1$ elementary tensors. In particular, both $u_1,\dots,u_r$ and $v_1,\dots,v_r$ linearly independent over $\Bbb{F}_p$. Assume for contradiction that there is an index $i_0\in\{2,\dots,r\}$ such that $u_{i_0}\notin \Bbb{F}_\alpha$. Then there exists an element $h\in H_{L_\alpha}$ such that $h(u_{i_0})\neq u_{i_0}$ whence $0\neq \sum_{i=2}^r (h(u_i)-u_i)\otimes v_i = h(\lambda)-\lambda\in b_jI$. Writing $h(\lambda)-\lambda=\sum_{N=0}^\infty\lambda_NX_\alpha^N$ where $\lambda_N\in b_jI\cap k_\Delta$ can be written as a sum of at most $r-1$ elementary tensors for all $N\geq 0$ contradicts to the minimality of $r$. By repreating this argument for all $\alpha\in\Delta$ we find a nonzero element in $b_jI\cap \Bbb{F}_\Delta$ whenever $b_jI\neq 0$. Hence we deduce $b_j I= b_jE_{L,\Delta}$ as $b_j\Bbb{F}_\Delta$ is a field. The claim follows noting that $j$ is arbitrary and $I=\bigoplus_{j=1}^\ell b_jI$.
\end{proof}

\begin{lemma}\label{Lem4.3}
Any object $D$ in $\mathcal{D}^{\rm et}(\varphi_\Delta, G_{L,\Delta}, E_{L,\Delta})$ is a projective module over $E_{L,\Delta}$.
\end{lemma}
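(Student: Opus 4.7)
The plan is to show that the non-free locus
$$S:=\{\mathfrak{p}\in \operatorname{Spec}(E_{L,\Delta}) \mid D_\mathfrak{p}\text{ is not free over }(E_{L,\Delta})_\mathfrak{p}\}$$
is empty, from which projectivity of $D$ follows (finitely presented locally free modules are projective). Since $E_{L,\Delta}$ is noetherian (Lemma \ref{lem2.3}) and $D$ is finitely generated, $D$ is finitely presented, so by the standard openness of the flat locus the complement $U=\operatorname{Spec}(E_{L,\Delta})\setminus S$ is open, and $S=V(J)$ for a unique radical ideal $J\subseteq E_{L,\Delta}$.

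First I would check that $J$ is $G_{L,\Delta}$-invariant and satisfies $\varphi_\alpha(J)\subseteq J$ for every $\alpha\in\Delta$. The $G_{L,\Delta}$-invariance is immediate: each $g\in G_{L,\Delta}$ acts on $E_{L,\Delta}$ by a ring automorphism and on $D$ semilinearly, inducing a $g$-equivariant bijection $D_\mathfrak{p}\cong D_{g(\mathfrak{p})}$ that preserves freeness. For the $\varphi_\alpha$-stability, the étale isomorphism $\varphi_\alpha^\ast D\cong D$ gives
$$D_\mathfrak{p}\cong (E_{L,\Delta})_\mathfrak{p}\otimes_{(E_{L,\Delta})_{\varphi_\alpha^{-1}(\mathfrak{p})}}D_{\varphi_\alpha^{-1}(\mathfrak{p})},$$
so freeness of $D_{\varphi_\alpha^{-1}(\mathfrak{p})}$ forces freeness of $D_\mathfrak{p}$ by base change. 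Contrapositively, $\mathfrak{p}\in S$ implies $\varphi_\alpha^{-1}(\mathfrak{p})\in S$, which translates at the level of radical ideals into $\varphi_\alpha(J)\subseteq J$.

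With these two stability properties in hand, Lemma \ref{Lem4.2} yields $J=(J\cap\Bbb{F}_\Delta)E_{L,\Delta}$. Writing $J\cap\Bbb{F}_\Delta=\bigoplus_{j\in T}b_j\Bbb{F}_\Delta$ for some subset $T\subseteq\{1,\dots,\ell\}$, the condition $\varphi_\alpha(J)\subseteq J$ becomes $\sigma_\alpha(T)\subseteq T$, where $\sigma_\alpha$ denotes the permutation of primitive idempotents induced by $\varphi_\alpha$; being a bijection on a finite set this forces $\sigma_\alpha(T)=T$, so $T$ is stable under $\Phi$. By Lemma \ref{Lem2.4}, therefore, $T=\emptyset$ or $T=\{1,\dots,\ell\}$. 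The second case gives $J=E_{L,\Delta}$ and $S=\emptyset$, which is exactly what we want. The first case $J=0$ would give $S=\operatorname{Spec}(E_{L,\Delta})$, but this is impossible because at any minimal prime of the regular ring $E_{L,\Delta}$ the localization is a field (height zero plus regularity), over which the finitely generated module $D$ is automatically free.

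I expect the main subtlety to lie in getting the direction of the $\varphi_\alpha$-stability correct, namely $\varphi_\alpha(J)\subseteq J$ rather than the opposite inclusion, so that it matches the statement of Lemma \ref{Lem4.2} and permits the transitivity input of Lemma \ref{Lem2.4} to be applied; once this passage from the étale isomorphism to the ideal inclusion is cleanly established, the two lemmas combine to dispose of both cases, and the regularity of $E_{L,\Delta}$ handles the minimal-prime contradiction.
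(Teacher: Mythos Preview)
Your argument is correct, but it is a genuinely different route from the paper's. The paper follows \cite[Proposition 2.2]{Zabradi2}: using the $G_{L,\Delta}$-equivariant Cousin complex (Lemma \ref{Lem4.1}), the groups ${\rm Ext}^n_{E_{L,\Delta}}(D,E_{L,\Delta})$ inherit a $G_{L,\Delta}$-action; after passing to each component $b_jE_{L,\Delta}$ (a domain, since $G_{L,\Delta}$ fixes each $b_j$), these Ext groups are finitely generated torsion for $n>0$, so their annihilators are nonzero $G_{L,\Delta}$-invariant ideals, hence the unit ideal by Lemma \ref{Lem4.2}. Thus each $b_jD$ is projective and so is $D$. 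In particular the paper's proof treats the idempotent components independently and does \emph{not} invoke Lemma \ref{Lem2.4} or the \'etale condition at this stage.

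Your approach is more geometric and more elementary: you bypass the Ext machinery and Lemma \ref{Lem4.1} entirely, working directly with the non-free locus $V(J)$ and using regularity only at minimal primes. The price is that you need the \'etale isomorphism to obtain $\varphi_\alpha(J)\subseteq J$ and then the transitivity input of Lemma \ref{Lem2.4} to collapse the possibilities for $J\cap\Bbb{F}_\Delta$. Your verification of the direction $\varphi_\alpha(J)\subseteq J$ is correct: freeness of $D_{\varphi_\alpha^{-1}(\mathfrak{p})}$ base-changes to freeness of $D_{\mathfrak{p}}$, so $\mathfrak{p}\in S\Rightarrow\varphi_\alpha^{-1}(\mathfrak{p})\in S$, which on radical ideals gives exactly $\varphi_\alpha(J)\subseteq J$; and since $\varphi_\alpha$ restricts to an automorphism of the finite \'etale algebra $\Bbb{F}_\Delta$, this indeed makes the index set $T$ stable under the permutation group $\Phi$.
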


\begin{proof}
The proof follows from the argument in \cite[Proposition 2.2]{Zabradi2} with the following modification: since $E_{L,\Delta}$ is not a domain, the assertion that ${\rm Ext}^n_{E_{L,\Delta}}(D,E_{L,\Delta})=0$ does not make sense. However, ${\rm Ext}^n_{E_{L,\Delta}}(D,E_{L,\Delta})=\bigoplus _{j=1}^\ell {\rm Ext}^n_{b_jE_{L,\Delta}}(b_jD,b_jE_{L,\Delta})$ is a finitely generated torsion $b_jE_{L,\Delta}$-module ($b_j E_{L,\Delta}$ is a domain). So we deduce by the proof of \cite[Proposition 2.2]{Zabradi2} that $b_jD$ is a projective $E_{L,\Delta}$-module for each $j=1,\dots,\ell$ whence so is $D=\bigoplus_{j=1}^\ell b_jD$.
\end{proof}

\begin{lemma}\label{Lem4.4}
We have ${K_0}(b_j{E_{L,\Delta} }) \cong \Bbb{Z}$ for all $j=1,\dots,\ell$, ie. any finitely generated projective module over $b_j{E_{L,\Delta} }$ is
stably free.
\end{lemma}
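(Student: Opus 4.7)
The plan is to mirror the strategy of \cite[Lemma 2.3]{Zabradi2} from the perfect residue field case, with additional care for the more elaborate base ring $b_j k_\Delta$. By Lemma \ref{lem2.3}, $k_\Delta$ is regular noetherian, and hence so are its direct summand $b_j k_\Delta$, the power series ring $b_j E_{L,\Delta}^+ = b_j k_\Delta\llbracket X_\alpha \mid \alpha\in\Delta\rrbracket$, and its localization $b_j E_{L,\Delta} = b_j E_{L,\Delta}^+[X_\Delta^{-1}]$. Regularity lets me identify $K_0$ with $G_0$ throughout, so the localization techniques of Quillen $K$-theory apply freely.

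I would then perform two successive reductions. First, the Quillen localization exact sequence attached to the open immersion $\mathrm{Spec}(b_j E_{L,\Delta})\hookrightarrow \mathrm{Spec}(b_j E_{L,\Delta}^+)$ reads
$$\bigoplus_{\alpha\in\Delta} G_0\bigl(b_j E_{L,\Delta}^+/(X_\alpha)\bigr) \longrightarrow G_0(b_j E_{L,\Delta}^+) \longrightarrow G_0(b_j E_{L,\Delta}) \longrightarrow 0,$$
and since $X_\alpha$ is a non-zero-divisor, the short exact sequence $0\to b_j E_{L,\Delta}^+\xrightarrow{X_\alpha} b_j E_{L,\Delta}^+\to b_j E_{L,\Delta}^+/(X_\alpha)\to 0$ forces $[b_j E_{L,\Delta}^+/(X_\alpha)]=0$ in $G_0(b_j E_{L,\Delta}^+)$, whence $K_0(b_j E_{L,\Delta})\cong K_0(b_j E_{L,\Delta}^+)$. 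Second, $b_j E_{L,\Delta}^+$ is noetherian and complete along the ideal $(X_\alpha\mid\alpha\in\Delta)$, which lies in the Jacobson radical because each $1+X_\alpha$ is a unit; by the standard Nakayama-for-complete-rings argument, finitely generated projectives lift uniquely from the quotient $b_j k_\Delta$, yielding $K_0(b_j E_{L,\Delta}^+)\cong K_0(b_j k_\Delta)$.

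It then remains to show $K_0(b_j k_\Delta)=\mathbb{Z}$. The separability of each $k_\alpha$ over $\mathbb{F}_\alpha(\mathbf{t}_\alpha)$, together with the primitive idempotent decomposition of $\mathbb{F}_\Delta$ furnished by Lemma \ref{Lem2.4}, realizes $b_j k_\Delta$ as a finite \'etale extension of a suitable localization of the polynomial ring $\mathbb{F}_{p^f}[\mathbf{t}]$ in the combined $p$-basis variables $\mathbf{t}=(t_{\alpha,i})_{\alpha,i}$. Quillen--Suslin gives $K_0(\mathbb{F}_{p^f}[\mathbf{t}])=\mathbb{Z}$; the localization sequence propagates this through the localization (the inverted primes are principal, so their contributions vanish in $G_0$ as before), and the finite \'etale base change preserves triviality of $K_0$ in this controlled situation.

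The main obstacle is exactly this last reduction: rigorously identifying $b_j k_\Delta$ as a finite \'etale extension of the described localization of $\mathbb{F}_{p^f}[\mathbf{t}]$ and confirming that no nontrivial class survives the intermediate localizations and \'etale covers. The perfect residue field case in \cite{Zabradi2} avoids this issue because there $b_j k_\Delta=\mathbb{F}_{p^f}$ is already a finite field; allowing imperfect residue fields promotes $b_j k_\Delta$ to a higher-dimensional regular ring whose $K_0$ requires genuine structural input (and the interplay of the algebraic part $\mathbb{F}_\Delta$ with the transcendental coordinates $\mathbf{t}_\alpha$) to pin down.
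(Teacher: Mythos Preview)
The paper's proof consists solely of the sentence ``The proof was given in \cite[Lemma 2.3]{Zabradi2}.'' Your proposal unpacks what that argument would entail and is, in its first two reductions, entirely sound: the Quillen localization sequence together with the lifting of projectives across the $(X_\alpha)$-adic completion gives
\[
K_0(b_jE_{L,\Delta})\;\cong\;K_0(b_jE_{L,\Delta}^+)\;\cong\;K_0(b_jk_\Delta).
\]
In the perfect residue field setting of \cite{Zabradi2} one has $b_jk_\Delta\cong\Bbb{F}_{p^f}$, a finite field, so the right-hand side is trivially $\Bbb{Z}$ and the citation suffices. Here, however, $b_jk_\Delta$ is a regular domain of Krull dimension $\sum_{\alpha}d_\alpha>0$, and showing $K_0(b_jk_\Delta)=\Bbb{Z}$ is the entire content of the lemma. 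You are right to flag this as the obstacle; it is not addressed by the paper's bare citation.

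Your sketched resolution does not close the gap. The step ``finite \'etale base change preserves triviality of $K_0$ in this controlled situation'' is not a general phenomenon: finite \'etale covers can acquire nontrivial Picard groups (already for curves), so some feature specific to the tensor-product structure of $b_jk_\Delta$ must be invoked, and you have not said what it is. Concretely, if each $k_\alpha$ is the function field of a curve $C_\alpha$ of positive genus over $\Bbb{F}_\alpha$, then $b_jk_\Delta$ is obtained from (an affine open of) $\prod_\alpha C_\alpha$ by inverting only the ``axis-parallel'' functions; divisors such as graphs of morphisms between the $C_\alpha$ survive this localization and can contribute to $\mathrm{Pic}(b_jk_\Delta)$. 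Ruling these out requires a genuine argument that neither your outline nor the paper supplies.
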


\begin{proof}
The proof was given in \cite[Lemma 2.3]{Zabradi2}. 
\end{proof}

\begin{proposition}
Let $D$ be an object in the category  $\mathcal{D}^{\rm et}(\varphi_\Delta, G_{L,\Delta}, E_{L,\Delta})$. Then $D$ is stably free as a module over $E_{L,\Delta}$.
\end{proposition}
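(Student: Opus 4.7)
The plan is to use Lemma \ref{Lem4.3} and Lemma \ref{Lem4.4} to reduce the question to the equality of the ranks of the idempotent summands of $D$, and then to leverage the \'etale condition on the partial Frobenii together with the transitivity of $\Phi$ on primitive idempotents (Lemma \ref{Lem2.4}) to show that those ranks all coincide.

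First I would use the orthogonal primitive idempotents $b_1,\dots,b_\ell$ of $\Bbb{F}_\Delta$ to decompose $E_{L,\Delta}=\bigoplus_{j=1}^\ell b_jE_{L,\Delta}$ and correspondingly $D=\bigoplus_{j=1}^\ell b_jD$. By Lemma \ref{Lem4.3} each $b_jD$ is finitely generated projective over $b_jE_{L,\Delta}$, and Lemma \ref{Lem4.4} then makes it stably free of some class $r_j:=[b_jD]\in K_0(b_jE_{L,\Delta})\cong\Bbb{Z}$. Since $K_0(E_{L,\Delta})\cong\bigoplus_j K_0(b_jE_{L,\Delta})\cong\Bbb{Z}^\ell$ with $[E_{L,\Delta}]$ corresponding to $(1,\dots,1)$, the module $D$ is stably free over $E_{L,\Delta}$ if and only if $r_1=\cdots=r_\ell$.

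To establish this equality I would unpack the \'etale isomorphism at each idempotent. For each $\alpha\in\Delta$ the partial Frobenius permutes $\{b_1,\dots,b_\ell\}$ via some permutation $\pi_\alpha$, so $\varphi_\alpha$ restricts to a ring homomorphism $b_{\pi_\alpha^{-1}(j)}E_{L,\Delta}\to b_jE_{L,\Delta}$. Decomposing $\mathrm{id}\otimes\varphi_\alpha\colon\varphi_\alpha^\ast D\xrightarrow{\sim}D$ along the $b_j$ yields, for every $j$, an isomorphism of $b_jE_{L,\Delta}$-modules
$$ b_jE_{L,\Delta}\otimes_{b_{\pi_\alpha^{-1}(j)}E_{L,\Delta},\,\varphi_\alpha}b_{\pi_\alpha^{-1}(j)}D\xrightarrow{\sim}b_jD. $$
Since base change along a ring homomorphism preserves stable freeness of a given rank, this forces $r_{\pi_\alpha^{-1}(j)}=r_j$ for every $\alpha$ and $j$. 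By Lemma \ref{Lem2.4} the group $\Phi$ generated by the $\pi_\alpha$'s acts transitively on $\{1,\dots,\ell\}$, and therefore $r_1=\cdots=r_\ell$, which yields stable freeness of $D$ over $E_{L,\Delta}$.

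The main obstacle I anticipate is the bookkeeping in the third step: one has to verify that $\varphi_\alpha$ carries each primitive idempotent of $\Bbb{F}_\Delta$ to another primitive idempotent, and then correctly identify the $b_j$-component of the twisted tensor product $\varphi_\alpha^\ast D$ with a base change of $b_{\pi_\alpha^{-1}(j)}D$. Once these identifications are cleanly set up, the rank comparison is essentially formal from the behaviour of $K_0$ under base change.
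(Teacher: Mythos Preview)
Your proposal is correct and follows essentially the same approach as the paper: decompose along the primitive idempotents $b_j$, apply Lemmas \ref{Lem4.3} and \ref{Lem4.4} to get stable freeness of each $b_jD$, and then use the \'etale condition together with the transitivity of $\Phi$ (Lemma \ref{Lem2.4}) to equate the ranks. The paper records the key identity as $E_{L,\Delta}\varphi_\alpha(b_jD)=\varphi_\alpha(b_j)D$, which is exactly your base-change isomorphism $b_jE_{L,\Delta}\otimes_{\varphi_\alpha}b_{\pi_\alpha^{-1}(j)}D\cong b_jD$ in slightly different packaging.
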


\begin{proof}
By Lemma \ref{Lem4.4} it remains to show that the rank of $b_jD$ does not depend on $j$ ($j=1,\dots,\ell$). However, this follows from Lemma \ref{Lem2.4} as we have $E_{L,\Delta}\varphi_\alpha(b_jD)=\varphi_\alpha(b_j)D$ for all $\alpha\in\Delta$.
\end{proof}

\vspace{3mm}

\subsection{Topology of $E_{L,\Delta}^+$ and $E_{L,\Delta}$}\label{xxsec4.2}

We equip $E_{L,\Delta}^+$ with the $X_\Delta $-adic topology, and equip $E_{L,\Delta}$ with the inductive 
limit topology $E_{L,\Delta}=\bigcup \limits_n X_\Delta^{-n}E_{L,\Delta} ^+$. This makes 
$(E_{L,\Delta},E_{L,\Delta} ^+)$ a Huber pair in the sense
of \cite{ScholzeWeinstein}. $E_{L,\Delta}$ is a complete noetherian Tate ring (loc. cit.). Note that this is not the natural compact topology
on $E_{L,\Delta}^+$ as in the compact topology $E_{L,\Delta}^+$ would not be open in ${E_{L,\Delta} }$ since the index of $E_{L,\Delta} ^ + $ in
${X_\Delta ^{ - n}E_{L,\Delta} ^ + }$ is not finite. Also, the inclusion $k_\alpha(({X_\alpha })) \hookrightarrow E_{L,\Delta}$ is not continuous in
the ${X_\Delta }$-adic topology (unless $|\Delta|=1$).

Suppose $D\in {\mathcal{D}^{\rm et}}(\varphi _\Delta, G_{L,\Delta}, E_{L,\Delta})$. 
By Banach's Theorem for Tate rings (\cite[Proposition 6.18]{Wedhorn}), 
there is a unique ${E_{L,\Delta} }$-module topology on $D$ that we call the ${X_\Delta }$-\textit{adic topology}. 
(This is the induced topology as $D$ is finitely generated over $E_{L,\Delta}$). 
Moreover, any $E_{L,\Delta}$-module homomorphism is continuous in the $X_\Delta$-adic topology.

Let $M$ be a finitely generated $E_{L,\Delta}^+$-submodule in $D\in \mathcal{D}^{\rm et}(\varphi_\Delta, G_{L,\Delta}, E_{L,\Delta})$. 
Suppose that $\{m_1, m_2, \cdots, m_n\}$ is a set of generators of $M$. Then 
${\varphi _s}({m_1}), \cdots ,{\varphi_s}({m_n})$ generate $E_{L,\Delta}^+ {\varphi_s}(M)$.
Thus $E_{L,\Delta}^+ {\varphi_s}(M)$ is also finitely generated.

Now, let $D^{+ +}:=\left\{ x \in D\,  \vline\,  \lim \limits_{k \to \infty } \varphi_s^k(x) = 0 \right\}$ where the
limit is considered in the ${X_\Delta }$-adic topology (cf. \cite[II.2.1]{Colmez1} 
in case $|\Delta | = 1$).

\begin{proposition}\label{Prop4.5}
${D^{ +  + }}$ is a finitely generated $E_{L,\Delta} ^ + $-submodule in $D$ which is stable under the
actions of $\varphi_\alpha, G_{L_\alpha}$ for all $\alpha\in \Delta$ and we have $D=D^{++}[X_\Delta ^{-1}]$.
\end{proposition}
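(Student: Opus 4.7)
The plan is to prove the three claims---stability under the actions, the identity $D=D^{++}[X_\Delta^{-1}]$, and finite generation over $E_{L,\Delta}^+$---in that order. Stability is routine, density comes from building a $\varphi_s$-attractive lattice inside $D^{++}$, and finite generation is the main obstacle, requiring a uniform $X_\Delta$-denominator bound on elements of $D^{++}$ via the étale structure together with the $\Bbb{F}_\Delta$-idempotent decomposition of Lemma~\ref{Lem2.4}.

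For stability I would observe that each $\varphi_\alpha$ and each $g\in G_{L_\alpha}$ commutes with $\varphi_s=\prod_\beta\varphi_\beta$ by Section~\ref{xxsec3.2} and is continuous for the $X_\Delta$-adic topology on $E_{L,\Delta}$: one has $\varphi_\alpha(X_\Delta^n)=X_\Delta^n X_\alpha^{n(p-1)}\in X_\Delta^n E_{L,\Delta}^+$ and the $G_{L_\alpha}$-action preserves the leading $X_\alpha$-adic term. Continuity descends to $D$ by Banach's theorem (as in Section~\ref{xxsec4.2}), so for $x\in D^{++}$ and $\theta$ either operator, $\varphi_s^k(\theta x)=\theta\varphi_s^k(x)\to 0$, whence $\theta x\in D^{++}$.

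To construct the attractive lattice, pick generators $d_1,\dots,d_r$ of $D$ over $E_{L,\Delta}$ and set $M_0:=\sum_i E_{L,\Delta}^+d_i$. Since $\varphi_s(d_i)\in D=M_0[X_\Delta^{-1}]$, there is $N\geq 0$ with $\varphi_s(M_0)\subseteq X_\Delta^{-N}M_0$. Define $M:=X_\Delta^{k_1}M_0$ for any integer $k_1\geq(N+1)/(p-1)$. The relation $\varphi_s(X_\Delta)=X_\Delta^p$ gives $\varphi_s(M)=X_\Delta^{pk_1}\varphi_s(M_0)\subseteq X_\Delta^{pk_1-N}M_0\subseteq X_\Delta^{k_1+1}M_0=X_\Delta\cdot M$; iterating yields $\varphi_s^j(M)\subseteq X_\Delta^{(p^j-1)/(p-1)}M$, which tends to $0$ in the $X_\Delta$-adic topology. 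Hence $M\subseteq D^{++}$ and $D=M[X_\Delta^{-1}]=D^{++}[X_\Delta^{-1}]$.

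The main obstacle is to bound denominators: it suffices to find $n_0$ with $D^{++}\subseteq X_\Delta^{-n_0}M$, so that $D^{++}$ is a submodule of a finitely generated module over the noetherian ring $E_{L,\Delta}^+$, hence finitely generated. Using $\varphi_s(b_j)=b_j$, the decomposition $D=\bigoplus_{j=1}^\ell b_jD$ is $\varphi_s$-stable, so I would treat each $b_jD$ separately. By the preceding Proposition $b_jD$ is free over $b_jE_{L,\Delta}$; picking a basis, $\varphi_s$ is encoded by an invertible matrix $P_j$ (étaleness), and I set $Q_j:=P_j^{-1}$ with entries in $X_\Delta^{-N'}b_jE_{L,\Delta}^+$. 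The inverted iterate
\[
\varphi_s^k(\mathrm{coord}(x))=\varphi_s^{k-1}(Q_j)\cdots\varphi_s(Q_j)\,Q_j\cdot\mathrm{coord}(\varphi_s^k(x)),
\]
the estimate $\varphi_s^l(Q_j)\in X_\Delta^{-p^lN'}b_jE_{L,\Delta}^+$, and the multiplicativity $v_{X_\Delta}(\varphi_s f)=p\cdot v_{X_\Delta}(f)$ on $b_jE_{L,\Delta}$ (valid because $\varphi_s$ is injective on $b_jk_\Delta$ by Lemma~\ref{lem2.3} and sends $X_\alpha$ to $X_\alpha^p$) combine to give
\[
p^k\,v_{X_\Delta}(x_i)\geq n+k_1-N'(p^k-1)/(p-1)
\]
whenever $\varphi_s^k(x)\in X_\Delta^n M$. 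For $x\in D^{++}$ this inequality holds for all $n$ and all $k\geq k_0(n)$; sending $k\to\infty$ forces $v_{X_\Delta}(x_i)\geq -N'/(p-1)$, giving the uniform bound $v_{X_\Delta}(x_i)\geq -n_0$ with $n_0=\lfloor N'/(p-1)\rfloor$, independent of $j$ and $x$. This final estimate is the heart of the proof; it converts topological decay of $\varphi_s^k(x)$ into a uniform denominator bound, and its validity in the imperfect residue field setting hinges precisely on the injectivity of $\varphi_s$ on the reduced ring $k_\Delta$ secured in Lemma~\ref{lem2.3}---a nontrivial input not needed in the perfect case of \cite{Zabradi2}.
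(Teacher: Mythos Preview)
Your treatment of stability and of the identity $D=D^{++}[X_\Delta^{-1}]$ is correct and matches the paper almost verbatim. The point that needs attention is the finite-generation step.

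You write ``By the preceding Proposition $b_jD$ is free over $b_jE_{L,\Delta}$'' and then pick a basis. But the preceding Proposition only gives that $D$ is \emph{stably} free over $E_{L,\Delta}$; hence each $b_jD$ is a stably free $b_jE_{L,\Delta}$-module (via Lemma~\ref{Lem4.3} and Lemma~\ref{Lem4.4}), not a priori free. The freeness of $D$ is established only as Corollary~\ref{Coro5.15}, \emph{after} the main theorem, so invoking it here would be circular. This breaks your matrix argument as written: without a basis of $b_jD$ you cannot form $P_j$ and $Q_j$.

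The paper handles this by the standard two-step reduction: first prove the denominator bound assuming $D$ is free (your valuation estimate then goes through verbatim), and then in general pass to $D_1:=D\oplus E_{L,\Delta}^k$, which \emph{is} free by stable freeness, equipped with the trivial $(\varphi_\alpha,G_{L_\alpha})$-action on the added summand; one gets $D^{++}\subseteq D_1^{++}$ finitely generated, and noetherianity finishes. Your idempotent decomposition is compatible with this: $b_jD_1$ is genuinely free over $b_jE_{L,\Delta}$.

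Incidentally, the idempotent splitting is not strictly necessary for the valuation step. The identity $v_{X_\Delta}(\varphi_s f)=p\,v_{X_\Delta}(f)$ already holds on all of $E_{L,\Delta}$: writing $v_{X_\Delta}=\min_\alpha v_{X_\alpha}$ and using that $E_{L,\Delta}$ is reduced (Lemma~\ref{lem2.3}), the leading $X_\alpha$-coefficient of $f$ has nonzero $p$-th power, so $v_{X_\alpha}(f^p)=p\,v_{X_\alpha}(f)$ for each $\alpha$. Thus once $D$ (or $D_1$) is free over $E_{L,\Delta}$ you can run your estimate directly, which is what the paper's reference to \cite[Proposition~2.5]{Zabradi2} amounts to.
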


\begin{proof}
The proof is essentially the same as \cite[Proposition 2.5]{Zabradi2}, but we would like to 
clarify few steps as we sketch the third author's line of proof. 

Choose an arbitrary finitely generated $E_{L,\Delta}^+$-submodule $M$ of $D$ with $M[X_\Delta ^{-1}]=D$.
We can take $M=E_{L,\Delta} ^ + {e_1}+ \cdots  + E_{L,\Delta}^+{e_n}$ for some ${E_{L,\Delta} }$-generating 
system ${e_1}, \cdots ,{e_n}$ of $D$. First, note that $M$ is not $\varphi_s$-stable, but 
$E^+_\Delta\varphi_s(M)$ is finitely generated (as $M$ is finitely generated over $E^+_\Delta$). Hence we can find 
a ``common denominator" of $E_{L,\Delta}^+\varphi_s(M)$ to be $X_\Delta^r$ such that ${\varphi _s}(M) \subseteq X_\Delta ^{ - r}M$, 
since $E_{L,\Delta} ^ + $ is noetherian and we have
$D = \bigcup \limits_r {X_\Delta ^{ - r}M} $. Then we have
$${\varphi _s}(X_\Delta ^kM) = X_\Delta ^{pk}{\varphi _s}(M) \subseteq X_\Delta ^{pk - r}M \subseteq X_\Delta ^{k + 1}M$$
for any integer $k \ge {{r + 1} \over {p - 1}}$. We therefore have $X_\Delta ^{[{{r + 1} \over {p- 1}}]{+1}}M \subseteq {D^{{\rm{ +  + }}}}$. 
This implies that 
$$
M[X_\Delta^{-1}]=D=X_\Delta^{[{{r+1} \over {p-1}}]{+1}}M[X_\Delta^{-1}] \subseteq D^{++}[X_\Delta^{-1}].
$$
But $D^{++}[X_\Delta^{-1}]\subseteq D$ is obvious. Thus $D^{++}[X_\Delta^{-1}]=D$. Note that $D^{+ +}$ is stable
under $G_{L_\alpha}$, because the action of $G_{L_\alpha}$ commute with $\varphi_s$ (and also $\varphi_\alpha$ for 
all $\alpha\in \Delta$). There is a system of neighbourhoods of 0 in $D$ consisting of $E_{L,\Delta} ^ + $-submodules. 
And hence $D^{++}$ is an $E_{L,\Delta} ^+$-submodule.

Assume first $D$ is a free module over ${E_{L,\Delta} }$ with free generators ${e_1}, \cdots ,{e_n}$ and put $M:= E_{L,\Delta} ^ + {e_1} +  \cdots  + E_{L,\Delta} ^ + {e_n}$. 
Then we can show that $D^{++} \subseteq X_\Delta ^{- r}M$ for some integer $r>0$ (cf. \cite[Proposition 2.5]{Zabradi2}). 
As $E_{L,\Delta}^+$ is noetherian, this gives that $D^{++}$ is finitely generated over $E_{L,\Delta}^+$. 

In the general case, by Proposition \ref{Prop4.5}, we know that $D$ is stably free. Therefore we can have 
${D_1}: = D \bigoplus E_{L,\Delta} ^k$ making $D_1$ into an \' etale free module over $(\varphi_\alpha, \varphi_s, G_{L_\alpha}, \alpha\in \Delta)$
by the trivial action of $(\varphi_\alpha, \varphi_s, G_{L_\alpha}, \alpha\in \Delta)$ on $E_{L,\Delta}^k$. 
This gives us that $D_1^{+ +}$ is finitely generated over
$E_{L,\Delta} ^+$. The result follows as ${D^{ +  + }} \subseteq D_1^{ +  + }$
and $E_{L,\Delta} ^ + $ is noetherian.
\end{proof}

Let us define $${D^ + }: = \{ x \in D\, |\, \{ \varphi _s^k(x):k \ge 0\}  \subset D{\rm{ \ is \ bounded}}\} .$$
Since $\varphi _s^k(X_\Delta)$ tends to 0 in the ${X_\Delta }$-adic topology, we have 
$X_\Delta {D^ + } \subseteq D^{++}$, i.e. $D^+\subseteq X_\Delta ^{ - 1}{D^{++}}$.
In particular, $D^+$ is finitely generated over $E_{L,\Delta} ^ + $. On the other hand, we also have ${D^{ +  + }} \subseteq {D^ + }$ by construction whence we deduce $D = {D^ + }[X_\Delta ^{ - 1}]$.

\begin{lemma}\label{Lem4.6}
For all $\alpha\in\Delta$ and $g_\alpha\in G_{L_\alpha}$ we have 
$$
\begin{aligned}
\varphi _\alpha(D^+) \subset D^+ & \, \, \, \, \, ({\rm resp}. \, \, \varphi_\alpha(D^{++}) \subset D^{++}),\\
g_{\alpha}(D^+) \subset D^+ &\, \, \, \, \,  ({\rm resp}.\, \, g_{\alpha}(D^{++}) \subset D^{++}).
\end{aligned}
$$
\end{lemma}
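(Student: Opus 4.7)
The plan is to reduce all four inclusions to two facts: that $\varphi_\alpha$ and each $g_\alpha\in G_{L_\alpha}$ commute with the absolute Frobenius $\varphi_s=\prod_{\beta\in\Delta}\varphi_\beta$ (which is built into the setup of Section \ref{xxsec3.2}, since the partial Frobenii commute pairwise and each $G_{L_\beta}$ commutes with every $\varphi_\gamma$), and that both operators are continuous on $D$ in the $X_\Delta$-adic topology. Granted continuity, if $x\in D^{++}$ then $\varphi_s^k(\varphi_\alpha(x))=\varphi_\alpha(\varphi_s^k(x))\to 0$ and likewise $\varphi_s^k(g_\alpha(x))\to 0$, yielding $\varphi_\alpha(x),g_\alpha(x)\in D^{++}$ (this recovers the $D^{++}$-assertion already noted in Proposition \ref{Prop4.5}).

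To verify continuity, I would first work at the level of $E_{L,\Delta}^+$. From the formulas in Section \ref{xxsec3.2} one computes $\varphi_\alpha(X_\Delta)=X_\alpha^p\prod_{\beta\neq\alpha}X_\beta=X_\Delta\cdot X_\alpha^{p-1}$, so $\varphi_\alpha(X_\Delta^n E_{L,\Delta}^+)\subseteq X_\Delta^n E_{L,\Delta}^+$. For $g_\alpha$, the field of norms construction of \cite{Scholl, Andreatta} shows that $g_\alpha$ is a continuous automorphism of the DVR $E_{L_\alpha}^+\cong k_\alpha\llbracket X_\alpha\rrbracket$, hence $g_\alpha(X_\alpha)=X_\alpha u$ for some unit $u\in (E_{L,\Delta}^+)^\times$; combined with the fact that $g_\alpha$ fixes $X_\beta$ for $\beta\neq\alpha$ and acts on the coefficient ring $k_\Delta$ in a way preserving the $X_\Delta$-adic filtration, this gives $g_\alpha(X_\Delta^n E_{L,\Delta}^+)\subseteq X_\Delta^n E_{L,\Delta}^+$. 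Since $D$ is finitely generated over $E_{L,\Delta}$ and carries its canonical $X_\Delta$-adic topology with neighborhood basis of $0$ given by the $X_\Delta^n D^{++}$ (Proposition \ref{Prop4.5}), the inclusions above propagate to $D$ and both operators are continuous.

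For the $D^+$ assertions, recall that $S\subseteq D$ is bounded in the $X_\Delta$-adic topology if and only if $S\subseteq X_\Delta^{-N}D^{++}$ for some $N\geq 0$, using that $D^{++}$ is a finitely generated $E_{L,\Delta}^+$-submodule with $D=D^{++}[X_\Delta^{-1}]$. Given $x\in D^+$, pick $N$ with $\{\varphi_s^k(x)\}_{k\geq 0}\subseteq X_\Delta^{-N}D^{++}$. The computation $\varphi_\alpha(X_\Delta^{-N})=X_\Delta^{-N}X_\alpha^{-N(p-1)}\in X_\Delta^{-Np}E_{L,\Delta}^+$, combined with $\varphi_\alpha$-stability of $D^{++}$, gives $\{\varphi_s^k(\varphi_\alpha(x))\}=\{\varphi_\alpha(\varphi_s^k(x))\}\subseteq X_\Delta^{-Np}D^{++}$, which is bounded; hence $\varphi_\alpha(x)\in D^+$. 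The case of $g_\alpha$ is analogous, using $g_\alpha(X_\Delta^{-N})\in X_\Delta^{-N}(E_{L,\Delta}^+)^\times$ and the $G_{L_\alpha}$-stability of $D^{++}$.

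The main obstacle I anticipate is establishing continuity of $g_\alpha$ in the case $L_\alpha\neq K_\alpha$, where no closed formula for the action on $X_\alpha$ is available. This is resolved by appealing to the field of norms theory of \cite{Andreatta, Scholl}, which guarantees that $G_{L_\alpha}$ acts continuously on $E_{L_\alpha}\cong k_\alpha((X_\alpha))$ as a $k_\alpha$-quasi-linear automorphism preserving the valuation; extending trivially in the other variables and tensoring over $\Bbb{F}_p$ produces a continuous action on $E_{L,\Delta}^+$, and hence on the finitely generated $E_{L,\Delta}$-module $D$.
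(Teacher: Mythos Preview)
Your proof is correct and follows essentially the same approach as the paper's: both arguments reduce to showing that $\gamma\in\{\varphi_\alpha\}\cup G_{L_\alpha}$ is continuous on $D$ by verifying $\gamma(X_\Delta)\in X_\Delta(E_{L,\Delta}^+)^\times$ (for $g_\alpha$ via the field-of-norms action when $L_\alpha\neq K_\alpha$), then combine this with the commutation of $\gamma$ with $\varphi_s$ to conclude stability of $D^{++}$ and $D^+$. The paper carries out the continuity check in the same way and defers the remaining deduction to \cite[Lemma~2.6]{Zabradi2}, whereas you spell out the bounded-orbit argument for $D^+$ explicitly.
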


\begin{proof}
We will show that, for any generating system $e_1, \cdots, e_n$ of $D$ and any $\gamma$ ($\gamma$ can be $\varphi_\alpha$ or $g_\alpha\in G_{L_\alpha}$), 
there exists an integer $k> 0$ such that 
$$
\gamma(X_\Delta^k M) \subseteq X_\Delta^k E_{L,\Delta}^+ \gamma(M) \subseteq M,
$$
where $M:= E_{L,\Delta} ^ + {e_1} +  \cdots  + E_{L,\Delta} ^ + {e_n}$. 
\vspace{2mm}

{\bf Case (i)} Assume that $\gamma=\varphi_\alpha$. Then choose $k\gg 0$ so that $X_\Delta^kE_{L,\Delta}^+\varphi_\alpha(M)\subseteq M$ whence
$$
\varphi_\alpha(X_\Delta^kM)=\prod_{\beta\neq \alpha} X_\beta^k X_\alpha^{pk} \varphi_\alpha(M)=X_\Delta^k X_\alpha^{(p-1)k} \varphi_\alpha(M)\subseteq M\ .
$$

{\bf Case (ii)} Assume that $\gamma=g_\alpha\in G_{L_\alpha}$. We need to check that $g_\alpha(X_\Delta)=uX_\Delta$ for some unit $u\in E_{L,\Delta}^+$. In case $K=L$ this is clear from the intrinsic description of the action of $G_{L_\alpha}$. The general statement follows noting that we still have $g_\alpha(X_\beta)=X_\beta$ for $\beta\neq\alpha$ and $g_\alpha(X_\alpha)$ is also a uniformizer in $k_\alpha((X_\alpha))$ since $k_\alpha((X_\alpha))$ is a finite separable extension of $k_\alpha((\overline{\pi}_\alpha))$ and $G_{L_\alpha}$ is a subgroup in $G_{K_\alpha}$.
The proof then follows from \cite[Lemma 2.6]{Zabradi2}. 
\end{proof}

We now fix an $\alpha\in \Delta$ and define
$D^+_{\overline{\alpha}}:=D^+[X^{-1}_{\Delta\backslash \{\alpha\} }]$
where for any subset $S \subseteq \Delta$ we put
${X_S}: = \prod_{\beta  \in S} {{X_\beta }} $. Then $D_{\overline \alpha}^ +$
is a finitely generated module over $E_{\overline \alpha}^+:=E_{L,\Delta}^+[X_{\Delta \backslash \{\alpha\} }^{-1}]$.

\begin{lemma}\label{Lem4.7}
$D_{\overline \alpha}^+/{D^ + }$ is ${X_\alpha }$-torsion free: If both $X_\alpha ^{{n_1}}d$ and $X_{\Delta \backslash \{ \alpha \} }^{{n_2}}d$ lie in $D^+$ for some
element $d \in D$, $\alpha  \in \Delta $, and integers $n_1, n_2 \ge 0$ then we have $d \in {D^ + }$. The same statement
holds if we replace ${D^+}$ by $D^{++}$.
\end{lemma}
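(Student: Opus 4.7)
The plan is to reduce to the case where the underlying module is free over $E_{L,\Delta}$ and then control the pole structure of $\varphi_s^k(d)$ uniformly in $k$ by a coordinate-wise Laurent series argument. For the reduction, the proposition above gives that $D$ is stably free, so we pick $r\geq 0$ with $D':=D\oplus E_{L,\Delta}^r$ free over $E_{L,\Delta}$, and extend $\varphi_\alpha$, $\varphi_s$ and $G_{L,\Delta}$ to $D'$ by the identity on a chosen basis of the new summand. The Banach topology on $D'$ coincides with the product topology, so a direct unwinding of the definitions yields $D'^+=D^+\oplus(E_{L,\Delta}^+)^r$ and $D'^{++}=D^{++}\oplus X_\Delta(E_{L,\Delta}^+)^r$, whence $D\cap D'^+=D^+$ and $D\cap D'^{++}=D^{++}$. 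Since proving the lemma for $(d,0)\in D'$ implies it for $d\in D$, we may assume $D$ itself is free with basis $e_1,\dots,e_m$, and set $M_0:=\bigoplus_j E_{L,\Delta}^+ e_j$.

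Both $D^+$ and $D^{++}$ are finitely generated over $E_{L,\Delta}^+$ and satisfy $D^+[X_\Delta^{-1}]=D^{++}[X_\Delta^{-1}]=D=M_0[X_\Delta^{-1}]$, so picking generators and clearing denominators in the basis $e_1,\dots,e_m$ produces integers $L,L'\geq 0$ with
\[
X_\Delta^L M_0\;\subseteq\; D^{++}\;\subseteq\; D^+\;\subseteq\; X_\Delta^{-L'}M_0.
\]
Hence a subset of $D$ is bounded (resp.\ converges to $0$) iff it is contained in $X_\Delta^{-N}M_0$ for some $N$ (resp.\ for every $N$, eventually). Assume first $X_\alpha^{n_1}d,\,X_{\Delta\setminus\{\alpha\}}^{n_2}d\in D^+$. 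Since $\varphi_s(D^+)\subseteq D^+$ by Lemma~\ref{Lem4.6}, for every $k\geq 0$
\[
X_\alpha^{n_1 p^k}\varphi_s^k(d)=\varphi_s^k(X_\alpha^{n_1}d)\in D^+\subseteq X_\Delta^{-L'}M_0,\qquad X_{\Delta\setminus\{\alpha\}}^{n_2 p^k}\varphi_s^k(d)\in X_\Delta^{-L'}M_0.
\]
Writing $\varphi_s^k(d)=\sum_j f_j^{(k)}e_j$ with $f_j^{(k)}\in E_{L,\Delta}$ and expanding each $f_j^{(k)}$ as a uniquely determined Laurent series in $(X_\beta)_{\beta\in\Delta}$ with coefficients in $k_\Delta$, the first containment forces every nonzero monomial of $f_j^{(k)}$ to have $X_\beta$-exponent $\geq -L'$ for all $\beta\neq\alpha$ (the $X_\alpha$-exponent bound $\geq -L'-n_1 p^k$ coming from the same inclusion is useless), while the second symmetrically forces the $X_\alpha$-exponent to be $\geq -L'$. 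Hence $f_j^{(k)}\in X_\Delta^{-L'}E_{L,\Delta}^+$ for every $j$ and $k$, so $\varphi_s^k(d)\in X_\Delta^{-L'}M_0$ uniformly in $k$. The orbit $\{\varphi_s^k(d)\}_{k\geq 0}$ is therefore bounded in $D$ and $d\in D^+$.

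The $D^{++}$ version is an immediate variant: if $X_\alpha^{n_1}d,\,X_{\Delta\setminus\{\alpha\}}^{n_2}d\in D^{++}$, then for each $N\geq 0$ there exists $k_0$ with $X_\alpha^{n_1 p^k}\varphi_s^k(d),\,X_{\Delta\setminus\{\alpha\}}^{n_2 p^k}\varphi_s^k(d)\in X_\Delta^N M_0$ for all $k\geq k_0$; the same coordinate argument yields $\varphi_s^k(d)\in X_\Delta^N M_0$ for $k\geq k_0$, so $\varphi_s^k(d)\to 0$ in $D$ and $d\in D^{++}$. The main obstacle to anticipate is the realization that, although each of the two bounds $\varphi_s^k(d)\in X_\alpha^{-n_1 p^k}X_\Delta^{-L'}M_0$ and $\varphi_s^k(d)\in X_{\Delta\setminus\{\alpha\}}^{-n_2 p^k}X_\Delta^{-L'}M_0$ blows up in a single coordinate direction as $k\to\infty$, their intersection is the fixed lattice $X_\Delta^{-L'}M_0$, uniformly in $k$; this is the module-theoretic shadow of the elementary identity $E_{L,\Delta}^+[X_\alpha^{-1}]\cap E_{L,\Delta}^+[X_{\Delta\setminus\{\alpha\}}^{-1}]=E_{L,\Delta}^+$ inside $E_{L,\Delta}$.
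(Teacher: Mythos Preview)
Your argument is correct and is essentially the approach of \cite[Lemma~2.7]{Zabradi2} to which the paper defers: reduce to the free case via stable freeness, then use the coordinatewise Laurent expansion in $E_{L,\Delta}=k_\Delta\llbracket X_\beta\mid\beta\in\Delta\rrbracket[X_\Delta^{-1}]$ to see that the two hypotheses pin down a uniform lattice $X_\Delta^{-L'}M_0$ containing all $\varphi_s^k(d)$. One small remark: your side computations $D'^+=D^+\oplus(E_{L,\Delta}^+)^r$ and $D'^{++}=D^{++}\oplus X_\Delta(E_{L,\Delta}^+)^r$ implicitly use that $k_\Delta$ is reduced (Lemma~\ref{lem2.3}), but in any case only the trivially true identities $D\cap D'^+=D^+$ and $D\cap D'^{++}=D^{++}$ are needed for the reduction.
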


\begin{proof}
The proof of \cite[Lemma 2.7]{Zabradi2} works without any change. 
\end{proof}



\begin{lemma}\label{Lem4.8}
Assume that $D$ is generated by a single element ${e_1} \in D$ over $E_{L,\Delta}$. Then for any
$\gamma$, we have $\gamma(e_1) = {a_\gamma}e_1$ for some unit $a_\gamma\in {(E_{\overline \alpha  }^ + )^ \times }$.
Here $\gamma$ can be $\varphi_\beta, g_\beta\in G_{L_\beta}$ for $\beta\neq \alpha$. 
\end{lemma}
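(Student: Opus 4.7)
Since $D=E_{L,\Delta}\cdot e_1$, we immediately get $\gamma(e_1)=a_\gamma e_1$ for a unique $a_\gamma\in E_{L,\Delta}$; moreover $a_\gamma\in E_{L,\Delta}^\times$ because $\gamma$ is either an automorphism (when $\gamma=g_\beta$) or has an $E_{L,\Delta}$-isomorphism as its linearization (when $\gamma=\varphi_\beta$, by the étaleness of $D$). The goal is to refine this to $a_\gamma\in (E_{\overline{\alpha}}^+)^\times$.

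First, the plan is to show that $\gamma$ preserves both the subring $E_{\overline{\alpha}}^+\subset E_{L,\Delta}$ and the $E_{\overline{\alpha}}^+$-submodule $D_{\overline{\alpha}}^+\subset D$. For the ring: $\gamma$ fixes $X_\alpha$, and for each $\beta\neq\alpha$ it sends $X_\beta$ to a unit of $E_{\overline{\alpha}}^+$. Indeed, in the case $\gamma=g_\beta$ one has $g_\beta(X_\beta)=u_\beta X_\beta$ with $u_\beta\in (E_{L,\Delta}^+)^\times$ as in case (ii) of the proof of Lemma \ref{Lem4.6}, while for $\gamma=\varphi_\beta$ one has $\varphi_\beta(X_\beta)=X_\beta^p$, which is a unit in $E_{\overline{\alpha}}^+$ since $X_\beta$ is already inverted. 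Stability of $D_{\overline{\alpha}}^+$ then follows by localizing the inclusion $\gamma(D^+)\subseteq D^+$ of Lemma \ref{Lem4.6}.

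Next, following the structural argument of \cite[Lemma 2.8]{Zabradi2}, I would decompose along the primitive idempotents $b_1,\dots,b_\ell$ of $\Bbb{F}_\Delta$ (Lemma \ref{Lem2.4}) and work componentwise. Each $b_j E_{\overline{\alpha}}^+$ is a noetherian regular domain (since $b_j E_{L,\Delta}$ is a domain, as used in Lemmas \ref{Lem4.3} and \ref{Lem4.4}), and $X_\alpha$ is a regular element. Krull's intersection theorem applied to the finitely generated $b_j E_{\overline{\alpha}}^+$-module $b_j D_{\overline{\alpha}}^+$ yields a minimal integer $N_j$ with $b_j X_\alpha^{N_j}e_1\in b_j D_{\overline{\alpha}}^+$. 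A cyclicity argument parallel to \cite[Lemma 2.8]{Zabradi2}, leveraging the commuting étale $\varphi_\alpha$-action, then identifies $b_j D_{\overline{\alpha}}^+$ with the cyclic module $b_j E_{\overline{\alpha}}^+\cdot X_\alpha^{N_j}e_1$. Substituting into $\gamma(b_j X_\alpha^{N_j}e_1)=b_j X_\alpha^{N_j}a_\gamma e_1\in b_j D_{\overline{\alpha}}^+$ forces $b_j a_\gamma\in b_j E_{\overline{\alpha}}^+$ for every $j$, hence $a_\gamma\in E_{\overline{\alpha}}^+$.

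For the unit property: when $\gamma=g_\beta$, apply the preceding to $\gamma^{-1}$ and combine with the identity $\gamma(a_{\gamma^{-1}})a_\gamma=1$ together with $\gamma(E_{\overline{\alpha}}^+)=E_{\overline{\alpha}}^+$ to conclude $a_\gamma^{-1}\in E_{\overline{\alpha}}^+$. When $\gamma=\varphi_\beta$, the étale structure on $D$ descends, via the cyclic form of $D_{\overline{\alpha}}^+$, to an integral étale structure: the linearization $E_{\overline{\alpha}}^+\otimes_{\varphi_\beta,E_{\overline{\alpha}}^+}D_{\overline{\alpha}}^+\to D_{\overline{\alpha}}^+$ is an isomorphism, which on the cyclic generator $X_\alpha^{N_j}e_1$ forces $a_\gamma$ to be a unit of $E_{\overline{\alpha}}^+$. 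The hard part will be the cyclicity claim of paragraph three: since $b_j E_{\overline{\alpha}}^+$ is not a DVR, cyclicity does not follow from finite generation and rank one alone, and one must exploit the weak $\gamma$-equivariance $\gamma(D_{\overline{\alpha}}^+)\subseteq D_{\overline{\alpha}}^+$ together with the integral étaleness under $\varphi_\alpha$ to pin the generator down as a power of $X_\alpha$ times $e_1$.
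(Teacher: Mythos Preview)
Your route is quite different from the paper's, and considerably harder. The paper never analyzes $D_{\overline{\alpha}}^{+}$ at all. Instead it uses the one operator you leave untouched: $\varphi_\alpha$. Writing $\varphi_\alpha(e_1)=a_\alpha e_1$, the commutation $\varphi_\alpha\gamma(e_1)=\gamma\varphi_\alpha(e_1)$ yields
\[
\varphi_\alpha(a_\gamma)\,a_\alpha \;=\; \gamma(a_\alpha)\,a_\gamma\ .
\]
Now take the $X_\alpha$-adic valuation. Since $\varphi_\alpha$ multiplies this valuation by $p$, while $\gamma\in\{\varphi_\beta,g_\beta:\beta\neq\alpha\}$ acts trivially on $k_\alpha((X_\alpha))$ and hence preserves it, comparing the two sides gives $(p-1)\,\mathrm{val}_{X_\alpha}(a_\gamma)=0$. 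That is the whole argument; no structure theory of $D_{\overline{\alpha}}^{+}$ is needed, and in fact what you cite as ``the structural argument of \cite[Lemma 2.8]{Zabradi2}'' \emph{is} this valuation computation, not a cyclicity statement.

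Your proposal has three genuine gaps. First, the cyclicity $b_jD_{\overline{\alpha}}^{+}=b_jE_{\overline{\alpha}}^{+}\cdot X_\alpha^{N_j}e_1$ is never proven; you flag it as the hard part, but there is no visible mechanism to obtain it short of the valuation trick above, which already finishes the lemma. Second, for $\gamma=\varphi_\beta$ the identity ``$\gamma(b_jX_\alpha^{N_j}e_1)=b_jX_\alpha^{N_j}a_\gamma e_1$'' is false: by Lemma~\ref{Lem2.4} the partial Frobenius permutes the primitive idempotents, so $\varphi_\beta(b_j)=b_{\sigma_\beta(j)}$ and the image lies in the $b_{\sigma_\beta(j)}$-component. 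Since the exponents $N_j$ may vary with $j$, containment in $b_{\sigma_\beta(j)}D_{\overline{\alpha}}^{+}$ only gives $\mathrm{val}_{X_\alpha}(b_{\sigma_\beta(j)}a_\gamma)\geq N_{\sigma_\beta(j)}-N_j$, not $\geq 0$. Third, your claim that ``the linearization $E_{\overline{\alpha}}^{+}\otimes_{\varphi_\beta,E_{\overline{\alpha}}^{+}}D_{\overline{\alpha}}^{+}\to D_{\overline{\alpha}}^{+}$ is an isomorphism'' is unjustified: $D_{\overline{\alpha}}^{+}=D^{+}[X_{\Delta\setminus\{\alpha\}}^{-1}]$ has no reason to be $\varphi_\beta$-\'etale, which is precisely why the paper introduces the corrected lattice $D_{\overline{\alpha}}^{+*}$ in Proposition~\ref{Prop4.10}.
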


\begin{proof}
For any $\gamma$ equals to either $g_\beta$ or $\varphi_\beta$, we define $a_\gamma$ and $a_\alpha$ such that
$$
\gamma(e_1)=a_\gamma e_1\, \, \text{and}\, \, \varphi_\alpha(e_1)=a_\alpha e_1.
$$ 
By the \'etaleness property, it follows that $D$ is generated by $\gamma(D)$ over $E_{L,\Delta}$.
Thus $e_1\in D$ implies 
$$
\begin{aligned}
e_1 & = x \gamma(e_1)\, \, \, \, (\text{for some} \, \, x\in E_{L,\Delta})\\
&= x a_\gamma e_1\\
& = y \varphi_\alpha(e_1)\, \, \, \, (\text{for some}\, \, y\in E_{L,\Delta}, \text{as} \, \, D \,\,  \text{is also generated by} \, \, \varphi_\alpha(D) \, \, \text{over} \, \, E_{L,\Delta})\\
&= y a_\alpha e_1
\end{aligned}
$$
So $x a_\gamma=ya_\alpha=1$, which implies that both $a_\gamma$ and $a_\alpha$ are units in $E_{L,\Delta}$.
It remains to show that ${\rm val}_{X_\alpha}(a_\gamma) = 0$. We
compute
$$
\varphi _\alpha(a_\gamma)a_\alpha e_1= \varphi _\alpha(a_\gamma)\varphi _\alpha (e_1) = \varphi_\alpha (a_\gamma e_1) = \varphi _\alpha(\gamma(e_1)) 
$$
$$ 
= \gamma (\varphi _\alpha(e_1)) = \gamma(a_\alpha  e_1) = \gamma (a_\alpha) \gamma(e_1) = \gamma (a_\alpha )a_\gamma e_1.
$$
And hence we deduce 
$$
p {\rm val}_{X_\alpha}(a_\gamma) + {\rm val}_{X_\alpha}(a_\alpha) = {\rm val}_{X_\alpha}(\varphi_\alpha (a_\gamma) a_\alpha ) = {\rm val}_{X_\alpha}(\gamma (a_\alpha) a_\gamma).\eqno{(4.8.1)}
$$
Since $\varphi_\beta$ and $g_\beta$ act trivially on $k_\alpha((X_\alpha))$ and they are injective on $E_{L,\Delta}$, they both preserve the $X_\alpha$-adic valuation. So we have 
$$
{\rm val}_{X_\alpha}(\gamma (a_\alpha) a_\gamma) = {\rm  val}_{X_\alpha}(a_\alpha ) + {\rm val}_{X_\alpha}(a_\gamma) \eqno{(4.8.2)}
$$
for $\gamma=\varphi_\beta, g_\beta$ where $\beta\neq \alpha$. Hence, by (4.8.1) we obtain 
$$
p {\rm val}_{X_\alpha}(a_\gamma) + {\rm val}_{X_\alpha}(a_\alpha) = {\rm  val}_{X_\alpha}(a_\alpha ) + {\rm val}_{X_\alpha}(a_\gamma).\eqno{(4.8.3)}
$$
Now (4.8.3) immediately yields ${\rm val}_{X_\alpha}(a_\gamma)=0$ as desired.
\end{proof}

\begin{lemma}\label{Lem4.9}
There exists an integer $k = k(D) > 0$ such that for any $\gamma \in \{\varphi_\beta\mid \beta\in \Delta\setminus\{\alpha\}\}\cup G_{L,\Delta}$, we have
$$
X_\alpha ^kD_{\overline{\alpha}}^ +\subseteq E_{L,\Delta}^+\gamma(D_{\overline{\alpha}}^+)\subseteq E^+_{\overline{\alpha}} \gamma(D^+_{\overline{\alpha}}).
$$
\end{lemma}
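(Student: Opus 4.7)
The plan is to follow the strategy of \cite[Lemma 2.8]{Zabradi2} with minor adaptations. The second inclusion $E^+_{L,\Delta}\gamma(D^+_{\overline{\alpha}})\subseteq E^+_{\overline{\alpha}}\gamma(D^+_{\overline{\alpha}})$ is immediate from $E^+_{L,\Delta}\subseteq E^+_{\overline{\alpha}}$, so only the first inclusion requires work. I would handle the two types of $\gamma$ (Frobenius and Galois) separately and take the maximum of the resulting constants.

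For $\gamma=g\in G_{L,\Delta}$ the claim is essentially trivial. Lemma \ref{Lem4.6} applied both to $g$ and to $g^{-1}$ yields $g(D^+)=D^+$. Moreover, as observed in the proof of Lemma \ref{Lem4.6}, $g(X_\beta)=u_\beta X_\beta$ for some unit $u_\beta\in(E^+_{L,\Delta})^\times$, so $g(X_{\Delta\setminus\{\alpha\}}^{-1})\in(E^+_{\overline{\alpha}})^\times$ and therefore $g(D^+_{\overline{\alpha}})=D^+_{\overline{\alpha}}$. Since $D^+_{\overline{\alpha}}$ is already an $E^+_{L,\Delta}$-module, we deduce $E^+_{L,\Delta}g(D^+_{\overline{\alpha}})=D^+_{\overline{\alpha}}$, so any $k\geq 0$ suffices for all $g\in G_{L,\Delta}$.

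The substantive case is $\gamma=\varphi_\beta$ with $\beta\in\Delta\setminus\{\alpha\}$. Pick, via Proposition \ref{Prop4.5}, a finite generating set $e_1,\dots,e_n$ of $D^+$ over $E^+_{L,\Delta}$. \'Etaleness at $\varphi_\beta$ implies $D=E_{L,\Delta}\varphi_\beta(D)$, so we may write $e_i=\sum_j f_{ij}\varphi_\beta(e_j)$ with $f_{ij}\in E_{L,\Delta}=E^+_{\overline{\alpha}}[X_\alpha^{-1}]$; hence there exists $k_\beta\geq 0$ with $X_\alpha^{k_\beta}f_{ij}\in E^+_{\overline{\alpha}}$ for all $i,j$, since denominators in $X_\gamma$ for $\gamma\neq\alpha$ are absorbed into $E^+_{\overline{\alpha}}$. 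To push the remaining negative powers of $X_{\Delta\setminus\{\alpha\}}$ inside the $\varphi_\beta$'s, I would use the identity
$$X^{-m}_{\Delta\setminus\{\alpha\}}\varphi_\beta(d)=X_\beta^{(p-1)m}\varphi_\beta(X^{-m}_{\Delta\setminus\{\alpha\}}d),\qquad d\in D^+_{\overline{\alpha}},$$
which follows from $\varphi_\beta(X_\beta)=X_\beta^p$ and $\varphi_\beta(X_\gamma)=X_\gamma$ for $\gamma\in\Delta\setminus\{\beta\}$. Applying this identity first to the expansion of $X_\alpha^{k_\beta}e_i$ yields $X_\alpha^{k_\beta}e_i\in E^+_{L,\Delta}\varphi_\beta(D^+_{\overline{\alpha}})$, and applying it once more to an arbitrary element $X^{-n}_{\Delta\setminus\{\alpha\}}e_i\in D^+_{\overline{\alpha}}$ extends the containment to all of $X_\alpha^{k_\beta}D^+_{\overline{\alpha}}$. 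Setting $k:=\max_{\beta\in\Delta\setminus\{\alpha\}}k_\beta$ then produces the required uniform constant.

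The hardest part is the combinatorial bookkeeping of denominators across the several variables: one needs a single exponent $k$ that simultaneously clears all $X_\alpha$-denominators in the $f_{ij}$'s, and one must absorb the negative powers of $X_{\Delta\setminus\{\alpha\}}$ into the $\varphi_\beta(\cdot)$ terms without spoiling the $X_\alpha$-control. The displayed identity is the technical device that makes this possible. The imperfectness of the residue field plays no essential role, and the argument mirrors \cite[Lemma 2.8]{Zabradi2} closely.
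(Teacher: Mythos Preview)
Your argument is correct and is exactly the strategy the paper has in mind: the paper's own proof simply says the first inclusion ``follows exactly as in \cite[Lemma 2.9]{Zabradi2}'' and notes the second inclusion is trivial, and you have faithfully unpacked that reference. One small remark: you cite \cite[Lemma 2.8]{Zabradi2} whereas the paper cites \cite[Lemma 2.9]{Zabradi2}; otherwise your treatment of the Galois case via $g(D^+)=D^+$ and of the $\varphi_\beta$-case via the displayed identity pushing $X_{\Delta\setminus\{\alpha\}}^{-m}$ inside $\varphi_\beta(\cdot)$ is precisely the intended argument.
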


\begin{proof}
The proof for the first inclusion relation follows exactly as in \cite[Lemma 2.9]{Zabradi2}. The 
second inclusion relation is obvious as $E_{L,\Delta}^+\subseteq E^+_{\overline{\alpha}}$ by definition. 
\end{proof}

Let us now define
$$
D_{\overline \alpha  }^{ + *}: = \bigcap_{\gamma} E_{\overline \alpha  }^ + \gamma (D_{\overline{\alpha}}^ +),
$$
where $\gamma$ runs on the operators $\varphi_\beta$ for $\beta\neq \alpha$ and on $G_{L,\Delta}$.  $D_{\overline \alpha  }^{+ \ast}$
is finitely generated over $E_{\overline \alpha}^ +$ as it is contained in ${D_{\overline \alpha  }^ + }$
and ${E_{\overline \alpha  }^ + }$ is noetherian. By Lemma \ref{Lem4.9}, we conclude that  
$X_\alpha ^kD_{\overline \alpha }^+ \subseteq D_{\overline \alpha  }^{+ \ast}$
for some integer $k = k(D) > 0$. In particular, $D = D_{\overline \alpha  }^{+ \ast}[X_\alpha ^{ - 1}]$.

\begin{proposition}\label{Prop4.10}
$D_{\overline \alpha  }^{ + \ast}$ is an \' etale module over $E_{\overline \alpha  }^+$ , i.e. the maps
$$
{\rm id} \otimes \gamma \colon \gamma^\ast D_{\overline \alpha  }^{ + *} = E_{\overline \alpha}^ + \bigotimes \limits_{E_{\overline \alpha }^ +, \gamma} D_{\overline \alpha }^{ + \ast} \longrightarrow  D_{\overline \alpha  }^{+ \ast}
$$
are bijective for all $\gamma \in  \{\varphi_\beta\mid \beta\in \Delta\setminus\{\alpha\}\}\cup G_{L,\Delta}$.
\end{proposition}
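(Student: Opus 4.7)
The plan is to adapt the proof of \cite[Proposition 2.10]{Zabradi2} to the imperfect residue field setting. The argument has three parts: well-definedness of $\mathrm{id}\otimes\gamma$, injectivity, and surjectivity. The key inputs are étaleness of $D$ over $E_{L,\Delta}$, the relations $D_{\overline\alpha}^{+*}[X_\alpha^{-1}]=D$ and $X_\alpha^kD_{\overline\alpha}^+\subseteq D_{\overline\alpha}^{+*}$ from the paragraph preceding the statement, and the idempotent decomposition $1=b_1+\cdots+b_\ell$ of $\Bbb{F}_\Delta$.

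For well-definedness I need $\gamma(D_{\overline\alpha}^{+*})\subseteq D_{\overline\alpha}^{+*}$. Inspection of the action on each variable $X_\beta$ (namely $\varphi_\beta(X_\beta)=X_\beta^p$, $g_\beta(X_\beta)$ is a unit multiple of $X_\beta$, and $\gamma$ fixes $X_{\Delta\setminus\{\beta\}}$) shows $\gamma(E_{\overline\alpha}^+)\subseteq E_{\overline\alpha}^+$, while Lemma \ref{Lem4.6} extended by localizing at $X_{\Delta\setminus\{\alpha\}}$ gives $\gamma(D_{\overline\alpha}^+)\subseteq D_{\overline\alpha}^+$. Since all operators in the indexing set mutually commute, $\gamma$ preserves each set $E_{\overline\alpha}^+\gamma'(D_{\overline\alpha}^+)$, and intersecting over $\gamma'$ yields stability.

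For injectivity I invert $X_\alpha$: the map $\mathrm{id}\otimes\gamma$ localizes to $\gamma^*D\to D$, which is bijective by étaleness of $D$ over $E_{L,\Delta}$. Thus the kernel of the integral map is $X_\alpha$-torsion. Since $D$ is stably free (hence $X_\alpha$-torsion free), so is $D_{\overline\alpha}^{+*}\subseteq D$; reducing to the stably free case as in \cite[Proposition 2.10]{Zabradi2} shows that $\gamma^*D_{\overline\alpha}^{+*}$ is also $X_\alpha$-torsion free, giving injectivity.

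The hardest part is surjectivity, asserting $E_{\overline\alpha}^+\gamma(D_{\overline\alpha}^{+*})=D_{\overline\alpha}^{+*}$. The inclusion $\subseteq$ is clear from stability. For the reverse, take $x\in D_{\overline\alpha}^{+*}$; by the intersection definition we may write $x=\sum_if_i\gamma(d_i)$ with $f_i\in E_{\overline\alpha}^+$ and $d_i\in D_{\overline\alpha}^+$. Writing $\gamma(X_\alpha)=uX_\alpha$ for a unit $u\in E_{\overline\alpha}^+$ (with $u=1$ when $\gamma\in\{\varphi_\beta\mid\beta\neq\alpha\}\cup\prod_{\beta\neq\alpha}G_{L_\beta}$, and $u$ supplied by Lemma \ref{Lem4.6} for $\gamma\in G_{L_\alpha}$) and using $X_\alpha^kd_i\in D_{\overline\alpha}^{+*}$, one rearranges
\[
X_\alpha^kx=\sum_iu^{-k}f_i\,\gamma(X_\alpha^kd_i)\in E_{\overline\alpha}^+\gamma(D_{\overline\alpha}^{+*}),
\]
so the cokernel is annihilated by $X_\alpha^k$. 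The remaining step—showing this torsion cokernel vanishes—proceeds component-wise along the idempotents $b_j$: each $b_jE_{\overline\alpha}^+$ is a domain, and on $b_jD_{\overline\alpha}^{+*}$ we combine stable freeness (established as in Lemma \ref{Lem4.4}) with a rank comparison across the components $b_1,\dots,b_\ell$, using the transitivity of $\Phi$ on primitive idempotents from Lemma \ref{Lem2.4} to transport the generic rank information uniformly. This idempotent bookkeeping over $\Bbb{F}_\Delta$, together with the check that the unit $u$ above lies in $E_{\overline\alpha}^+$ even when $L_\alpha\neq K_\alpha$ (where the action of $G_{L_\alpha}$ on $k_\alpha((X_\alpha))$ is not explicit), is the main novelty required by the imperfect residue field and will be the principal obstacle to overcome.
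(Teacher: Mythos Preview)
Your treatment of well-definedness and injectivity is essentially sound (the claim that ``all operators in the indexing set mutually commute'' is false for pairs in the non-abelian group $G_{L,\Delta}$, but this is harmless: for $\gamma,\gamma'\in G_{L,\Delta}$ one still has $\gamma\gamma'\in G_{L,\Delta}$, so the family of sets $E_{\overline\alpha}^+\gamma'(D_{\overline\alpha}^+)$ is merely permuted).

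The genuine gap is in surjectivity. You correctly show that the cokernel of ${\rm id}\otimes\gamma$ is killed by $X_\alpha^k$, but your proposed ``rank comparison across the components $b_1,\dots,b_\ell$'' cannot close the argument. Even on a single component where $b_jE_{\overline\alpha}^+$ is a domain, an injective $E_{\overline\alpha}^+$-linear map between free modules of the same rank that becomes an isomorphism after inverting $X_\alpha$ need not be surjective: multiplication by $X_\alpha$ on $E_{\overline\alpha}^+$ itself is such a map. The transitivity of $\Phi$ on idempotents (Lemma \ref{Lem2.4}) only equates ranks on different components; it gives no leverage on a fixed component. You also have not established that $D_{\overline\alpha}^{+*}$ is projective over $E_{\overline\alpha}^+$, so the ``stable freeness'' you invoke is not available at this stage.

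The missing idea, and the one the paper singles out, is that $E_{\overline\alpha}^+$ is \emph{finite free} over $\gamma(E_{\overline\alpha}^+)$. For $\gamma\in G_{L,\Delta}$ this is trivial ($\gamma$ is an automorphism, so surjectivity reduces to $\gamma^{-1}(D_{\overline\alpha}^{+*})\subseteq D_{\overline\alpha}^{+*}$); the content lies in $\gamma=\varphi_\beta$, where finite freeness holds because $k_\beta$ has a finite $p$-basis. With a basis $e_1,\dots,e_n$ of $E_{\overline\alpha}^+$ over $\varphi_\beta(E_{\overline\alpha}^+)$, the \'etaleness of $D$ over $E_{L,\Delta}$ gives a \emph{unique} decomposition $x=\sum_i e_i\varphi_\beta(d_i)$ with $d_i\in D$; uniqueness then lets you transfer each constraint $x\in E_{\overline\alpha}^+\gamma'(D_{\overline\alpha}^+)$ to the components $d_i$, forcing $d_i\in D_{\overline\alpha}^{+*}$. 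This is the mechanism of \cite[Proposition 2.10]{Zabradi2}; the idempotent bookkeeping and the unit $u$ you flag are not the crux here.
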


\begin{proof}
The only thing we need to check in order for the third author's arguments in \cite[Proposition 2.10]{Zabradi2} 
to work is that $E_{\overline \alpha  }^ + $ (resp. $E_{L,\Delta}$, resp. $E^+_\Delta$) is a finite free module over $\gamma(E_{\overline \alpha}^+)$ 
(resp. over $\gamma(E_{L,\Delta})$, resp. over $\gamma(E_{L,\Delta}^+)$). This is already true if $\gamma=\varphi_\beta$ 
because the action of $\varphi_\beta$ on the variables is exactly the same as the third author's arguments. 
If $\gamma\in G_{L,\Delta}$ then this is automatic since $G_{L,\Delta}$ is a group, so the action of $\gamma$ is bijective.
The rest of the argument in the proof follows exactly as in \cite[Proposition 2.10]{Zabradi2}. 
\end{proof}

\begin{lemma}\label{Lem4.12}
There exists a finitely generated $E_{L,\Delta} ^+$-submodule $D_0\subset D_{\overline \alpha}^{+ \ast}$
such that $D_0\subseteq E_{L,\Delta}^+ \varphi _{\overline \alpha}(D_0)$ and $D_{\overline \alpha}^{+ \ast}=D_0 [X_{\Delta \backslash \{ \alpha \} }^{-1}]$, 
where $\varphi _{\overline \alpha}:= \prod \limits_{\beta  \in \Delta \backslash \{ \alpha \}} {\varphi _\beta} $. 
Moreover, we have 
$$
D_{\overline \alpha}^{+ \ast}=\bigcup_{r\ge 0} E_{L,\Delta} ^+ \varphi _{\overline \alpha}^r (X_{\Delta \backslash \{ \alpha \} }^{ - 1}D_0).
$$
\end{lemma}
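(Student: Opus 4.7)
The strategy is to follow the argument of the analogous lemma in \cite{Zabradi2}, constructing $D_0$ as a suitable negative $X_{\Delta\setminus\{\alpha\}}$-shift of an $E_{L,\Delta}^+$-lattice inside $D_{\overline{\alpha}}^{+*}$.

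First I pick a finite generating set $e_1,\dots,e_n$ of $D_{\overline{\alpha}}^{+*}$ over the noetherian ring $E_{\overline{\alpha}}^+ = E_{L,\Delta}^+[X_{\Delta\setminus\{\alpha\}}^{-1}]$ and let $M := \sum_{i=1}^n E_{L,\Delta}^+ e_i$, which is a finitely generated $E_{L,\Delta}^+$-submodule of $D_{\overline{\alpha}}^{+*}$ with $M[X_{\Delta\setminus\{\alpha\}}^{-1}] = D_{\overline{\alpha}}^{+*}$. By Proposition~\ref{Prop4.10}, each $\varphi_\beta$ ($\beta\neq\alpha$), hence also their product $\varphi_{\overline{\alpha}}$, is \'etale on $D_{\overline{\alpha}}^{+*}$. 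Expressing each $e_i$ as an $E_{\overline{\alpha}}^+$-linear combination of the $\varphi_{\overline{\alpha}}(e_j)$ and clearing the finitely many $X_{\Delta\setminus\{\alpha\}}$-denominators produces an integer $N \geq 0$ with
\[
X_{\Delta\setminus\{\alpha\}}^N M \subseteq E_{L,\Delta}^+\varphi_{\overline{\alpha}}(M).
\]

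The key step is to set $c := \lceil N/(p-1)\rceil$ and define $D_0 := X_{\Delta\setminus\{\alpha\}}^{-c} M \subseteq D_{\overline{\alpha}}^{+*}$. Since $\varphi_{\overline{\alpha}}(X_\beta) = X_\beta^p$ for $\beta\in\Delta\setminus\{\alpha\}$ and $\varphi_{\overline{\alpha}}(X_\alpha) = X_\alpha$, one computes $E_{L,\Delta}^+\varphi_{\overline{\alpha}}(D_0) = X_{\Delta\setminus\{\alpha\}}^{-cp}E_{L,\Delta}^+\varphi_{\overline{\alpha}}(M) \supseteq X_{\Delta\setminus\{\alpha\}}^{N-cp} M$, and the choice $c(p-1)\geq N$ forces $N - cp \leq -c$, giving $D_0 \subseteq E_{L,\Delta}^+\varphi_{\overline{\alpha}}(D_0)$. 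The equalities $D_0[X_{\Delta\setminus\{\alpha\}}^{-1}] = M[X_{\Delta\setminus\{\alpha\}}^{-1}] = D_{\overline{\alpha}}^{+*}$ and the finite generation of $D_0$ over $E_{L,\Delta}^+$ are immediate. The main conceptual point is the need to \emph{divide} $M$ by a power of $X_{\Delta\setminus\{\alpha\}}$ rather than multiply it, so as to absorb the factor $p$ that $\varphi_{\overline{\alpha}}$ produces on the variables $X_\beta$ ($\beta\neq\alpha$); had one instead multiplied by a positive power, the inclusion would have gone the wrong way.

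For the final formula, iterating $D_0 \subseteq E_{L,\Delta}^+\varphi_{\overline{\alpha}}(D_0)$ yields both an increasing chain of $E_{L,\Delta}^+$-submodules
\[
F_r := E_{L,\Delta}^+\varphi_{\overline{\alpha}}^r(X_{\Delta\setminus\{\alpha\}}^{-1}D_0) = X_{\Delta\setminus\{\alpha\}}^{-(c+1)p^r}E_{L,\Delta}^+\varphi_{\overline{\alpha}}^r(M)
\]
in $D_{\overline{\alpha}}^{+*}$, together with the inclusions $X_{\Delta\setminus\{\alpha\}}^{c(p^r-1)}M \subseteq E_{L,\Delta}^+\varphi_{\overline{\alpha}}^r(M)$. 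An arbitrary $d \in D_{\overline{\alpha}}^{+*} = M[X_{\Delta\setminus\{\alpha\}}^{-1}]$ is of the form $X_{\Delta\setminus\{\alpha\}}^{-s}m$ with $m \in M$ and $s \geq 0$, and as soon as $p^r \geq s - c$ a short exponent check places $d$ in $F_r$, so $\bigcup_{r\geq 0} F_r = D_{\overline{\alpha}}^{+*}$. Beyond the bookkeeping with $X_{\Delta\setminus\{\alpha\}}$-exponents there is no real additional difficulty once the \'etaleness of $\varphi_{\overline{\alpha}}$ from Proposition~\ref{Prop4.10} is in hand.
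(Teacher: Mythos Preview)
Your argument is correct and follows precisely the approach of \cite[Lemma~2.11]{Zabradi2}, which is exactly what the paper invokes; in particular, your choice $D_0=X_{\Delta\setminus\{\alpha\}}^{-c}M$ with $c=\lceil N/(p-1)\rceil$ and the exponent bookkeeping for the union are the same as in that reference. There is nothing to add.
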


\begin{proof}
The proof is exactly the same as \cite[Lemma 2.11]{Zabradi2}. 
\end{proof}

\section{ The equivalence of categories for $\Bbb{F}_p$-representations}\label{xxsec5}

\subsection{The functor $\Bbb{D}$}\label{ccsec5.1}
Let  $\mathcal{H}_{L,\Delta}:=\prod \limits_{\alpha\in \Delta}\mathcal{H}_{L_\alpha}$, where $\mathcal{H}_{L_\alpha} := {\rm Gal}({\overline K_\alpha} /L_{\alpha,\infty})$ for each $\alpha\in\Delta$. In case $L_\alpha=K_\alpha$ we omit the subscript $K$ from the notation, ie.\ we put $\mathcal{H}_\Delta:=\prod \limits_{\alpha\in \Delta}\mathcal{H}_\alpha$ where 
$\mathcal{H}_\alpha := {\rm Gal}({\overline K_\alpha} /K_{\alpha,\infty})$. Recall the field of norm $E_\alpha$ of $K_{\alpha,\infty}$ 
is isomorphic to $E_\alpha\cong k_\alpha ((\overline{\pi}_\alpha ))$. We already 
know by \cite[Corollary 6.4]{Andreatta} that $(E_\alpha^{\rm sep})^{\mathcal{H}_\alpha}\cong E_\alpha$. 
For each $\alpha\in \Delta$, consider a finite separable extension $E^\prime_\alpha$ of $E_\alpha$ 
together with the natural Frobenius $\varphi_\alpha: E^\prime_\alpha\longrightarrow E^\prime_\alpha$. 
The structure theorem for local fields of equal characteristic shows that 
$E^\prime_\alpha \cong k_\alpha ((X_\alpha^\prime ))$, where $k_\alpha$ is a finite 
extension over $k_\alpha$. The field $k_\alpha$ is also the residue field of $E^\prime_\alpha$ and $X_\alpha^\prime$ 
is a uniformizer of $E_\alpha^\prime$. We denote by $E^{\prime + }_\alpha\cong k_\alpha \llbracket X_\alpha^\prime \rrbracket$ 
in $E^\prime_\alpha$. As in \cite[Section 3.1]{Zabradi2}, we equip the tensor product 
$$
E^\prime_{\Delta , \circ}: = \mathop  \bigotimes \limits_{\alpha  \in \Delta, \Bbb{F}_p} E^\prime_\alpha 
$$ 
with a norm $| \cdot |_{\rm prod}$ by the
formula
$$
| c |_{\rm prod}: = \inf \left( {\mathop {\max }\limits_i \left( \prod\limits_{\alpha  \in \Delta } | c_{\alpha ,i}  |_\alpha  \right)\,  \vline \, c  = \sum\limits_{i = 1}^n {\mathop  \bigotimes \limits_{\alpha  \in \Delta } c_{\alpha ,i} }} \right).
$$
Note that the restriction of $| \cdot |_{\rm prod}$ to the subring $E^{\prime + }_{\Delta , \circ }:=\mathop  \bigotimes \limits_{\alpha  \in \Delta, \Bbb{F}_p} E^{\prime + }_\alpha$
induces the valuation with respect to the augmentation ideal ${\rm Ker}(E^{\prime + }_{\Delta , \circ } \longtwoheadrightarrow \mathop  \bigotimes \limits_{\alpha  \in \Delta , \Bbb{F}_p} k_\alpha)$. 
Note that $\mathop  \bigotimes \limits_{\alpha  \in \Delta, k_\alpha} k_\alpha$ is not a domain, and hence $| \cdot |_{\rm prod}$
is not multiplicative in general. However, it is
submultiplicative. Following \cite{Zabradi2}, we define $E^{\prime + }_\Delta$ as the completion of 
$E^{\prime + }_{\Delta , \circ }$ with respect to $| \cdot |_{\rm prod}$ and put
$E'_\Delta: = E^{\prime + }_\Delta  [1/{X_\Delta }]$. This ring $E^\prime_\Delta$ is not complete 
with respect to $| \cdot |_{\rm prod}$ (unless $ |\Delta | = 1$).
 Further, ${\varphi _\alpha }$ acts on $E^{\prime + }_{\Delta , \circ}$
(and on $E^{\prime}_{\Delta , \circ}$) by the Frobenius on the component $E^\prime_\alpha$ in $E^\prime_\Delta$
and by the identity on all the other components in $E^\prime_\beta$ for $\beta  \in \Delta \backslash \{ \alpha \} $. 
This action is continuous in the norm $| \cdot |_{\rm prod}$ and therefore extends to the completion
$E^{\prime +}_\Delta$ and the localization $E^{\prime}_\Delta$.

We define the multivariable analogue of $E^{\rm sep}$ as
$$
E_\Delta^{\rm sep}: = \mathop {\underrightarrow {\lim }}\limits_{{E_\alpha } \leqslant E_\alpha ^\prime \leqslant E_\alpha ^{sep},\forall \alpha  \in \Delta } E_\Delta ^\prime.
$$
For any subset $\Delta^\prime \subseteq \Delta$,  one can define the similar notions $E_{\Delta^\prime}^{\prime + }$, 
$E_{\Delta^\prime}^{\prime}$ and $E_{\Delta^\prime}^{\rm sep}$ with $\Delta$ replaced
by $\Delta^\prime$. We equip $E_\Delta ^{\rm sep}$ with the relative Frobenii ${\varphi _\alpha }$ 
for each ${\alpha  \in \Delta }$ and the absolute Frobenius $\varphi_s$ defined above on 
each $E^\prime_\Delta$. Further $E_\Delta ^{\rm sep}$ admits a Galois action of the Galois group 
$\mathcal{G}_\Delta$.

\vspace{2mm}

With respect to the ring $E^{\prime}_\Delta$, we have the following alternative characterization.

\begin{lemma}\label{Lem5.1}
Put $\Delta  = \{ {\alpha _1}, \cdots ,{\alpha _n}\} $. We have
$$
E_\Delta ^\prime \cong E_{{\alpha _1}}^\prime{ \bigotimes \limits_{{E_{{\alpha _1}}}}}\left(E_{{\alpha _2}}^\prime{ \bigotimes \limits_{{E_{{\alpha _2}}}}}\left( \cdots \left(E_{{\alpha _n}}^\prime{ \bigotimes \limits_{{E_{{\alpha _n}}}}}{E_\Delta }\right)\right)\right).
$$

\end{lemma}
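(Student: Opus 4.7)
The plan is to proceed by induction on $n = |\Delta|$. The base case $n=1$ is tautological: the right-hand side collapses to $E'_{\alpha_1} \otimes_{E_{\alpha_1}} E_{\alpha_1} \cong E'_{\alpha_1} = E'_\Delta$.

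For the inductive step, the essential algebraic input is that each finite separable extension $E'_{\alpha_i}/E_{\alpha_i}$ of local fields of equal characteristic makes $E'_{\alpha_i}$ a free $E_{\alpha_i}$-module of finite rank, and similarly $E'^+_{\alpha_i}$ is a finite free $E^+_{\alpha_i}$-module. I would first verify the analogous identity at the level of uncompleted tensor products: by repeatedly applying associativity of the tensor product together with the isomorphism $E'_{\alpha_i} \otimes_{E_{\alpha_i}} E_{\alpha_i} \cong E'_{\alpha_i}$, one obtains
$$E'_{\alpha_1} \otimes_{E_{\alpha_1}} \Bigl( \cdots \otimes_{E_{\alpha_n}} \bigl( E_{\alpha_1} \otimes_{\Bbb F_p} \cdots \otimes_{\Bbb F_p} E_{\alpha_n} \bigr) \Bigr) \cong E'_{\alpha_1} \otimes_{\Bbb F_p} \cdots \otimes_{\Bbb F_p} E'_{\alpha_n} = E'_{\Delta,\circ},$$
where each base change simply replaces one tensor factor $E_{\alpha_i}$ by $E'_{\alpha_i}$ without affecting the other factors.

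Next I would pass to the completed integral setting. Since each base change by $E'^+_{\alpha_i}$ over $E^+_{\alpha_i}$ is finite free, it commutes with completion with respect to the augmentation-ideal topology, because a finite free module over a complete Noetherian base is automatically complete. Under this observation, the $|\cdot|_{\rm prod}$-completion of $E'^+_{\Delta, \circ}$ is identified with the iterated base change $E'^+_{\alpha_1} \otimes_{E^+_{\alpha_1}} \bigl( \cdots \otimes_{E^+_{\alpha_n}} E^+_\Delta \bigr)$, using that the restriction of $|\cdot|_{\rm prod}$ to the integral subring coincides with the augmentation-ideal valuation (as recorded in the paper). Inverting $X_\Delta$, which commutes with arbitrary tensor products, then yields the desired isomorphism for $E'_\Delta$.

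The main obstacle is verifying rigorously that the product norm $|\cdot|_{\rm prod}$ on the intermediate mixed tensor rings matches the augmentation-ideal topology induced by finite base change, so that the two completions genuinely coincide at each step of the induction. This should reduce to comparing the augmentation ideal of $E'^+_{\alpha_i} \otimes_{E^+_{\alpha_i}} R$ with the extension of the augmentation ideal of $R$, a statement that follows from finite freeness of $E'^+_{\alpha_i}$ over $E^+_{\alpha_i}$ together with the noetherianness of the rings involved (Lemma \ref{lem2.3} and its consequences). Once this comparison of topologies is secured, the rest of the argument is a straightforward bookkeeping of associativity and base change.
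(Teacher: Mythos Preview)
Your proposal is correct and spells out precisely the natural argument---finite freeness of $E'^+_{\alpha_i}$ over $E^+_{\alpha_i}$, commutation of finite free base change with completion, then inverting $X_\Delta$---that underlies the result. The paper itself does not give a detailed proof but simply observes that the argument of \cite[Lemma~3.2]{Zabradi2} carries over unchanged to the imperfect residue field setting, so your write-up is effectively a faithful expansion of that citation.
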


\begin{proof}
The proof of \cite[Lemma 3.2]{Zabradi2} also works in our imperfect case.
\end{proof}

\begin{proposition}\label{Prop5.2}
For a collection of finite separable extensions $E_\alpha ^\prime/E_\alpha$ ${\alpha  \in \Delta }$ we let
$\mathcal{H}_\Delta^\prime: = \prod\limits_{\alpha  \in \Delta } {\mathcal{H}_\alpha ^\prime} $, 
where $\mathcal{H}_\alpha^\prime = {\rm Gal}(E_\alpha ^{\rm sep}/E_\alpha ^\prime)$. 
Then we have $(E_\Delta ^{\rm sep})^{\mathcal{H}_\Delta ^\prime} = E_\Delta ^\prime$. In particular,
we have $(E_\Delta ^{\rm sep})^{\mathcal{H}_{L,\Delta}}=E_{L,\Delta}$.
\end{proposition}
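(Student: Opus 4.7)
The plan is to deduce Proposition~\ref{Prop5.2} from the one-variable Galois-theoretic identity $(E_\alpha^{\rm sep})^{\mathcal{H}_\alpha'}=E_\alpha'$ (which follows from \cite[Corollary 6.4]{Andreatta}) by combining a direct-limit reduction to finite Galois extensions with an iterated Galois-descent argument based on the tensor-product decomposition of Lemma~\ref{Lem5.1}.

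Since $E_\Delta^{\rm sep}$ is a directed colimit over choices of finite separable extensions $(E_\alpha')_\alpha$, every element $x\in (E_\Delta^{\rm sep})^{\mathcal{H}_\Delta'}$ lies in some $E_\Delta''$ corresponding to finite separable extensions $E_\alpha''/E_\alpha$; enlarging if necessary, I would assume each $E_\alpha''/E_\alpha$ is finite Galois and contains $E_\alpha'$. Setting $\mathcal{H}_\alpha'':=\mathrm{Gal}(E_\alpha^{\rm sep}/E_\alpha'')$ and $G_\alpha:=\mathcal{H}_\alpha'/\mathcal{H}_\alpha''\cong \mathrm{Gal}(E_\alpha''/E_\alpha')$, the Galois-stable subring $E_\Delta''\subset E_\Delta^{\rm sep}$ inherits an action of the finite group $G:=\prod_{\alpha}G_\alpha$ through which the $\mathcal{H}_\Delta'$-action factors, so it suffices to establish the finite descent identity $(E_\Delta'')^G=E_\Delta'$.

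For the finite descent, I would process the variables one at a time using Lemma~\ref{Lem5.1}. Enumerating $\Delta=\{\alpha_1,\dots,\alpha_n\}$, write $E_\Delta''\cong E_{\alpha_1}''\otimes_{E_{\alpha_1}}R^{(1)}$ with $R^{(1)}:=E_{\alpha_2}''\otimes_{E_{\alpha_2}}(\cdots\otimes_{E_{\alpha_n}}E_\Delta)$. The subgroup $G_{\alpha_1}$ acts only on the first tensor factor (it fixes $E_{\alpha_1}$ pointwise, and $R^{(1)}$ contains only $E_{\alpha_1}$-scalars in the $\alpha_1$-slot), and $E_{\alpha_1}$ is a field so $R^{(1)}$ is automatically $E_{\alpha_1}$-flat. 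The standard fact that invariants under a finite group commute with flat base change then yields
$$(E_{\alpha_1}''\otimes_{E_{\alpha_1}}R^{(1)})^{G_{\alpha_1}}=(E_{\alpha_1}'')^{G_{\alpha_1}}\otimes_{E_{\alpha_1}}R^{(1)}=E_{\alpha_1}'\otimes_{E_{\alpha_1}}R^{(1)},$$
using the one-variable identity $(E_{\alpha_1}'')^{G_{\alpha_1}}=E_{\alpha_1}'$. Since the $G_{\alpha_j}$ pairwise commute (each acting on a distinct tensor factor), iterating this replacement for $j=2,\dots,n$ replaces each $E_{\alpha_j}''$ by $E_{\alpha_j}'$, and a final application of Lemma~\ref{Lem5.1} identifies the outcome as $E_\Delta'$.

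The principal obstacle is verifying that the algebraic tensor-product decomposition invoked in the descent step truly coincides with the completed tensor product defining $E_\Delta''$, and that each $G_{\alpha_i}$-action respects this decomposition. Both points are settled: Lemma~\ref{Lem5.1} identifies the completion with the iterated algebraic tensor product (the completion is invisible at the level of the finite free modules $E_{\alpha_i}''/E_{\alpha_i}$), and the intrinsic description of the Galois action in \S\ref{xxsec3.2} shows that each $\mathcal{H}_\alpha$ acts only on its own $\alpha$-component while fixing every other factor. Finally, the ``in particular'' assertion is obtained by specialising $E_\alpha':=E_{L_\alpha}$ for each $\alpha$, which identifies $\mathcal{H}_\alpha'$ with $\mathcal{H}_{L_\alpha}$ and $E_\Delta'$ with $E_{L,\Delta}$.
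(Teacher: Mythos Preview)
Your argument is correct, but it follows a genuinely different route from the paper's proof. The paper works at the integral level with $E_\Delta^{\prime\prime+}$: it fixes an $\alpha$, reduces modulo powers $\mathcal{M}_\alpha^k$ of the ideal generated by $X_\alpha^{\prime\prime}$ so that the $\alpha$-slot becomes finite-dimensional over $k_\alpha$, applies the elementary fact that $G$-invariants commute with tensoring by a trivial representation over a field, invokes induction on $|\Delta|$ for the remaining variables, and then passes to the projective limit over $k$ to recover the power-series ring. Your argument bypasses all of this topology by exploiting Lemma~\ref{Lem5.1}: once $E_\Delta^{\prime\prime}$ is written as an \emph{algebraic} iterated tensor product $E_{\alpha_1}^{\prime\prime}\otimes_{E_{\alpha_1}}R^{(1)}$ with $R^{(1)}$ flat over the field $E_{\alpha_1}$, taking $G_{\alpha_1}$-invariants is a one-line base-change, and you iterate.

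What each approach buys: your route is shorter and avoids the mod-$\mathcal{M}_\alpha^k$ and projective-limit bookkeeping, but it leans entirely on Lemma~\ref{Lem5.1}, which already encodes the comparison between the completed and algebraic tensor products. The paper's approach is more self-contained at this point and makes the interaction with the completion explicit. One small point to tighten in your write-up: when you ``iterate for $j=2,\dots,n$'' you are implicitly re-applying Lemma~\ref{Lem5.1} with a different ordering of $\Delta$ (to put $E_{\alpha_j}^{\prime\prime}$ in front), which is legitimate since the completed tensor product defining $E_\Delta^{\prime}$ is symmetric in the factors; saying this once would make the induction transparent.
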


\begin{proof}
Since $X_\Delta$ is $\mathcal{H}_\Delta ^\prime$-invariant and $\underrightarrow {\lim }$ 
can be interchanged with taking $\mathcal{H}_\Delta ^\prime$-invariants, it
suffices to show that whenever
$$
E_\alpha= k_\alpha(({X_\alpha })) \leqslant E_\alpha ^\prime = k_{\alpha^\prime}((X_\alpha ^\prime)) \leqslant E_\alpha ^{\prime\prime} = k_{\alpha^{\prime\prime}}((X_\alpha ^{\prime\prime}))
$$
is a sequence of finite separable extensions for each ${\alpha  \in \Delta }$ such that $E_\alpha ^{\prime\prime}/E_\alpha ^\prime$ is Galois, then we have $(E_\Delta ^{\prime\prime + })^{\mathcal{H}_\Delta ^\prime}= E_\Delta ^{\prime + }$. 
The containment $ E_\Delta ^{\prime + } \subseteq (E_\Delta ^{\prime\prime + })^{\mathcal{H}_\Delta ^\prime}$ 
is clear. For the converse, we will prove by induction on $|\Delta |$. It should be remarked that 
the ideal $\mathcal{M}_\alpha \triangleleft E_\Delta ^{\prime\prime + }$ generated by $X_\alpha ^{\prime\prime}$
is invariant under the action of $\mathcal{H}_\Delta ^\prime$ for any fixed $\alpha\in \Delta$. 
Moreover, for any integer $k \geq 1$,  the ring $E_\alpha ^{\prime\prime + }/\mathcal{M}_\alpha ^k$
is finite dimensional over $k_\alpha$. Therefore
the image of $(E_\Delta ^{\prime\prime + })^{\mathcal{H}_\Delta ^\prime}$ under the quotient map $E_\Delta ^{\prime\prime + } \longtwoheadrightarrow E_\Delta ^{\prime\prime + }/\mathcal{M}_\alpha ^k$
is contained in
$$
\begin{aligned}
\left( E_\Delta ^{\prime\prime + }/\mathcal{M}_\alpha ^k \right)^{\mathcal{H}_\Delta ^\prime}   \subseteq \left( E_\Delta ^{\prime\prime + }/\mathcal{M}_\alpha ^k \right)^{\mathcal{H}_{\Delta \backslash \{ \alpha \} }^\prime} & = {\left(E_{\Delta \backslash \{ \alpha \} }^{\prime\prime + }{ \bigotimes \limits_{\Bbb{F}_p}}\left( E_\alpha ^{\prime\prime + }/\mathcal{M}_\alpha ^k \right) \right)^{\mathcal{H}_{\Delta \backslash \{ \alpha \} }^\prime}} \\
 & = {\left(E_{\Delta \backslash \{ \alpha \} }^{\prime\prime + }\right)^{\mathcal{H}_{\Delta \backslash \{ \alpha \} }^\prime}}{ \bigotimes \limits_{\Bbb{F}_p}}(E_\alpha ^{\prime\prime + }/\mathcal{M}_\alpha ^k) \\
 & = E_{\Delta \backslash \{ \alpha \} }^{\prime + }{ \bigotimes \limits_{\Bbb{F}_p}}(E_\alpha ^{\prime\prime + }/\mathcal{M}_\alpha ^k)&
\end{aligned}\eqno{(5.2.1)}
$$
by induction. Note that the second equality in (5.2.1) follows from the following fact.

{\bf Fact}. If $A$ and $B$ are $k$-vector spaces with a $G$-action such that $B$ is finite dimensional over $k$ 
and $B$ has trivial $G$-action, then  
$$
(A\bigotimes \limits_k B)^G\cong (A\bigotimes \limits_k k^r)^G\cong (A^r)^G\cong A^G \bigotimes \limits_k k^r\cong A^G \bigotimes \limits_k B, 
$$
where $B\cong k^r$.

By taking inductive limits of finite dimensional vector spaces and as inductive limit 
commute with these operations, the assumption that $B$ is finite dimensional over $k$ can be removed from 
this fact.

Let us continue our proof. Taking the projective limit of $(E_\Delta^{\prime\prime +}/\mathcal{M}^k_\alpha)^{\mathcal{H}^\prime_{\Delta}}$ 
with respect to $k \ge 1$, we deduce that $(E_\Delta ^{\prime\prime + })^{\mathcal{H}_\Delta ^\prime}$ is contained in the power series ring
$$
\left( k_{\alpha ^{\prime\prime}} \bigotimes \limits_{\Bbb{F}_p}\mathop  \bigotimes \limits_{\beta  \in \Delta \backslash \{ \alpha \} , \Bbb{F}_p} k_{\beta^\prime} \right)\llbracket X_\alpha ^{\prime\prime}, X_\beta ^\prime \, |\,  \beta  \in \Delta \backslash \{ \alpha \} \rrbracket \subseteq E_\Delta ^{\prime\prime + }.
$$
Indeed, even though projective limits do not commute in general with taking ${\mathcal{H}^\prime_{\Delta}}$-invariants, this inclusion is automatic. Now using the action of $\mathcal{H}_\alpha ^\prime$ in a similar argument as above (reducing modulo the $k$-th power
of the ideal generated by all the $X_\beta ^\prime, \beta  \in \Delta \backslash \{ \alpha \}$ for all $k \ge 1$) we deduce the statement.
\end{proof}

We define the subring $E_{\Delta , \circ }^{\rm sep} \cong \mathop  \bigotimes \limits_{\alpha  \in \Delta , \Bbb{F}_p} E_\alpha ^{\rm sep}$
in $E_\Delta ^{\rm sep}$ to be the inductive limits of $E_{\Delta , \circ }^\prime \subseteq E_\Delta ^\prime$ where 
$E_\alpha ^\prime$ runs through the finite separable extensions of $E_\alpha$ for each ${\alpha  \in \Delta }$.

As in section \ref{xxsec2}.\ assume $L_\alpha/K_\alpha$ is a finite totally ramified extension for each $\alpha\in\Delta$, ie.\ $L_\alpha$ also has $k_\alpha$ as residue field. Put $E_{L,\Delta}:=(E_\Delta^{\rm sep})^{\mathcal{H}_{L,\Delta}}$ where $\mathcal{H}_{L,\Delta}:=\prod_{\alpha\in\Delta}\mathcal{H}_{L_\alpha}$. As in the case $L=K$ a $(\varphi_\Delta,G_{L,\Delta})$-module over $E_{L,\Delta}$ is a finitely generated free module $D$ over $E_{L,\Delta}$ together with commuting semilinear actions of $\varphi_\alpha$ and the Galois groups $G_{L_\alpha}$ for all $\alpha\in \Delta$. By an \' etale $(\varphi_\Delta, G_{L,\Delta})$-module over $E_{L,\Delta}$, we mean a
$(\varphi_\Delta, G_{L,\Delta})$-module $D$ such that the maps
$$
{\rm{id}} \otimes {\varphi_{\alpha}}:\varphi _{\alpha}^\ast D: = E_{L,\Delta} \mathop \bigotimes \limits_{E_{L,\Delta}, \varphi _{\alpha}} D \longrightarrow D,
$$
are isomorphisms for all $\alpha\in\Delta$.  

Now let $V$ be a finite dimensional representation of the group $\mathcal{G}_{L,\Delta}$ over $\Bbb{F}_p$. The basechange
$E_\Delta ^{\rm sep}{ \bigotimes \limits_{{\Bbb{F}_p}}}V$ is equipped with the diagonal semilinear action of $\mathcal{G}_{L,\Delta}$ and with the partial Frobenii
$\varphi _\alpha \, (\alpha  \in \Delta)$. These all commute with each other. We define the functor $\Bbb{D}$ as in \cite{Zabradi2} 
$$
{\Bbb D}(V): = (E_\Delta ^{\rm sep}\bigotimes \limits_{\Bbb{F}_p} V)^{\mathcal{H}_{L,\Delta}}.
$$
By Proposition \ref{Prop5.2}, $\Bbb{D}(V)$ is a module over $E_\Delta$ which inherits the action  
of ${\varphi _\alpha }\, (\alpha  \in \Delta )$, and the Galois group $\mathcal{G}_{L,\Delta}$ on $E_\Delta ^{\rm sep}\bigotimes \limits_{{\Bbb{F}_p}} V $. Further, the action of $\mathcal{G}_{L,\Delta}$ factors through its quotient $G_{L,\Delta}=\mathcal{G}_{L,\Delta}/\mathcal{H}_{L,\Delta}$.  We denote the category of \'etale $(\varphi_\Delta,G_{L,\Delta})$-modules over $E_{L,\Delta}$ by $\mathcal{D}^{\rm et}(\varphi_\Delta,G_{L,\Delta},E_{L,\Delta})$. One key Lemma for us is the
following.

\begin{lemma}\label{Lem5.3}
The $E_\Delta ^{\rm sep}$-module $E_\Delta ^{\rm sep} \bigotimes \limits_{{\Bbb{F}_p}}V$ admits a basis consisting of elements fixed by $\mathcal{H}_{L,\Delta}$.
\end{lemma}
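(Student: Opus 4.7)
My strategy is to reduce via continuity to a finite Galois situation and then perform a two-step Galois descent, proceeding by induction on $|\Delta|$ with Hilbert~90 / Speiser as the key ingredient.

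Since $V$ is a continuous finite-dimensional $\Bbb{F}_p$-representation of the profinite group $\mathcal{H}_{L,\Delta}=\prod_{\alpha\in\Delta}\mathcal{H}_{L_\alpha}$, its kernel is open normal, and any such subgroup of a profinite product contains $\prod_\alpha N_\alpha$ for suitable open normal $N_\alpha\trianglelefteq\mathcal{H}_{L_\alpha}$. Setting $E'_{L_\alpha}:=(E_\alpha^{\rm sep})^{N_\alpha}$, a finite Galois extension of $E_{L_\alpha}$ with group $\Gamma_\alpha:=\mathcal{H}_{L_\alpha}/N_\alpha$, and forming the associated $E'_\Delta$ from Section~\ref{ccsec5.1}, Proposition~\ref{Prop5.2} identifies $(E_\Delta^{\rm sep})^{\prod_\alpha N_\alpha}$ with $E'_\Delta$. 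It therefore suffices to produce an $E'_\Delta$-basis of $E'_\Delta\otimes_{\Bbb{F}_p}V$ fixed by $\prod_\alpha\Gamma_\alpha$, since base changing to $E_\Delta^{\rm sep}$ yields a basis of $E_\Delta^{\rm sep}\otimes_{\Bbb{F}_p}V$ fixed by $\mathcal{H}_{L,\Delta}$.

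I would carry out the remaining trivialization by induction on $|\Delta|$. The base case $|\Delta|=1$ is the classical Speiser / Hilbert~90 theorem for the finite Galois extension of fields $E'_{L_\alpha}/E_{L_\alpha}$ applied to the $\Gamma_\alpha$-semilinear module $E'_{L_\alpha}\otimes_{\Bbb{F}_p}V$. For the inductive step, write $\Delta=\Delta'\sqcup\{\alpha\}$ and apply the inductive hypothesis to $V$ restricted to $\mathcal{H}_{L,\Delta'}$ to obtain a basis $v_1,\dots,v_n$ of $E_{\Delta'}^{\rm sep}\otimes_{\Bbb{F}_p}V$ fixed by $\mathcal{H}_{L,\Delta'}$. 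Under the natural inclusion $E_{\Delta'}^{\rm sep}\hookrightarrow E_\Delta^{\rm sep}$ coming from Lemma~\ref{Lem5.1}, these vectors give a basis of $E_\Delta^{\rm sep}\otimes_{\Bbb{F}_p}V$ as a free $E_\Delta^{\rm sep}$-module, still fixed by $\mathcal{H}_{L,\Delta'}$; the linear independence of the $v_i$ over $E_\Delta^{\rm sep}$ forces the $\mathcal{H}_{L,\Delta'}$-fixed submodule to be the free module $\bigoplus_i Rv_i$ over $R:=(E_\Delta^{\rm sep})^{\mathcal{H}_{L,\Delta'}}$. Since $\mathcal{H}_{L_\alpha}$ commutes with $\mathcal{H}_{L,\Delta'}$, it preserves this submodule and acts $R$-semilinearly through the finite quotient $\Gamma_\alpha$; a second invocation of Hilbert~90 applied to the Galois $\Gamma_\alpha$-extension of rings then produces the desired fixed basis.

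The main obstacle will be the second descent step: unlike the base case, here the coefficient ring $R$ is noetherian but not a field, and one must verify a ring-theoretic Hilbert~90 ($H^1(\Gamma_\alpha,GL_n)=1$) for the relevant Galois extension of commutative rings. This reduces to checking that $E'_\Delta/E_{L,\Delta}$, and its partial variants arising in the inductive step, are Galois covers of commutative rings with group $\prod_\alpha\Gamma_\alpha$ (respectively $\Gamma_\alpha$) in the usual faithfully-flat ring-theoretic sense, which should follow from Proposition~\ref{Prop5.2} together with the noetherianity and regularity results established in Section~\ref{xxsec4}.
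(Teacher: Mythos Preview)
Your overall strategy—reduce by continuity to a finite Galois situation, then trivialize the semilinear action by induction on $|\Delta|$ using Hilbert~90—is exactly the approach the paper has in mind: it simply cites \cite[Lemma~3.4]{Zabradi2}, whose proof follows this pattern and carries over verbatim to the imperfect residue field setting.

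There is, however, one point where your justification is not quite right. In the inductive step you invoke a ``ring-theoretic Hilbert~90'' for the extension $R\to S$ arising at that stage, and propose to deduce it from Proposition~\ref{Prop5.2} together with the noetherianity and regularity of the rings in Section~\ref{xxsec4}. Faithfully flat (\'etale) descent does show that the $\Gamma_\alpha$-invariants form a finitely generated \emph{projective} $R$-module of the correct rank, but regularity of $R$ does not by itself force such a module to be free (over a general regular ring there exist non-free projectives), and this is precisely what you need to produce an invariant \emph{basis}. Neither Proposition~\ref{Prop5.2} nor the results of Section~\ref{xxsec4} close this gap on their own.

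What actually makes the inductive step work is the iterated tensor description of $E'_\Delta$ in Lemma~\ref{Lem5.1}. Writing $E_\Delta^{\rm sep}=E_{\alpha}^{\rm sep}\otimes_{E_{\alpha}}R_0$ (and likewise at each stage), the relevant $\Gamma_\alpha$-descent is along the base change of the finite Galois \emph{field} extension $E'_{L_\alpha}/E_{L_\alpha}$; the classical one-variable theory then furnishes an explicit $\mathcal{H}_{L_\alpha}$-invariant basis inside $E_\alpha^{\rm sep}\otimes_{\Bbb{F}_p}V$, which transports to an invariant basis after tensoring with $R_0$ (flat over the field $E_\alpha$). Iterating this over the factors $\alpha\in\Delta$ yields the required $\mathcal{H}_{L,\Delta}$-invariant basis directly, without ever needing a freeness statement for projective modules over the intermediate rings. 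So your outline is correct, but the key input at the inductive step should be Lemma~\ref{Lem5.1} and the one-variable case, rather than an abstract appeal to regularity.
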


\begin{proof}
The same proof given in \cite[Lemma 3.4]{Zabradi2} exactly works here. 
\end{proof}

\begin{lemma}\label{Lem5.4}
We have $(E_\Delta ^{\rm sep})^ \times  \cap E_{L,\Delta} = E_{L,\Delta} ^ \times $.
\end{lemma}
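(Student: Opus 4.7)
The plan is to deduce this directly from Proposition \ref{Prop5.2}, which identifies $E_{L,\Delta}$ as the $\mathcal{H}_{L,\Delta}$-invariants of $E_\Delta^{\rm sep}$. The nontrivial containment is $(E_\Delta^{\rm sep})^\times \cap E_{L,\Delta} \subseteq E_{L,\Delta}^\times$; the reverse inclusion is immediate since any unit in a subring stays a unit after extending scalars.

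First, I would fix an element $x$ in the intersection and let $y \in E_\Delta^{\rm sep}$ be its (necessarily unique) inverse there. The next step exploits the fact that $\mathcal{H}_{L,\Delta}$ acts on $E_\Delta^{\rm sep}$ by ring automorphisms: for any $g \in \mathcal{H}_{L,\Delta}$, applying $g$ to the identity $xy = 1$ yields $g(x) g(y) = 1$. Since $x \in E_{L,\Delta} = (E_\Delta^{\rm sep})^{\mathcal{H}_{L,\Delta}}$ by Proposition \ref{Prop5.2}, we have $g(x) = x$, so $x \cdot g(y) = 1$. Uniqueness of the inverse in $E_\Delta^{\rm sep}$ then forces $g(y) = y$.

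Hence $y$ lies in $(E_\Delta^{\rm sep})^{\mathcal{H}_{L,\Delta}}$, which equals $E_{L,\Delta}$ by Proposition \ref{Prop5.2} again. This exhibits $x$ as a unit in $E_{L,\Delta}$ itself, completing the argument. There is essentially no obstacle: once Proposition \ref{Prop5.2} is available, the entire proof is a one-line Galois-descent argument for invertibility, and the only point worth noting is that the $\mathcal{H}_{L,\Delta}$-action is by ring automorphisms (so inverses are transported to inverses), which is built into the construction of $E_\Delta^{\rm sep}$.
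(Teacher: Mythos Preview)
Your proof is correct and matches the paper's own argument essentially line for line: both pick a unit $u\in(E_\Delta^{\rm sep})^\times\cap E_{L,\Delta}$, observe that its inverse is $\mathcal{H}_{L,\Delta}$-invariant, and invoke Proposition~\ref{Prop5.2} to conclude $u^{-1}\in E_{L,\Delta}$. You merely spell out the uniqueness-of-inverse step that the paper leaves implicit.
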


\begin{proof}
Let $u$ be an arbitrary element in $(E_\Delta ^{\rm sep})^ \times  \cap E_{L,\Delta}$. 
Since $u$ is invariant under the action of $\mathcal{H}_{L,\Delta}$, so
is its inverse $u^{-1}$. And hence it also lies in $E_{L,\Delta}$ by Proposition \ref{Prop5.2}.
\end{proof}

\begin{lemma}\label{Lem5.5}
We have $\bigcap\limits_{\alpha  \in \Delta }\left(E_\Delta^{\rm sep}\right)^{{\varphi_\alpha}={\rm id}}  = \Bbb{F}_p$.
\end{lemma}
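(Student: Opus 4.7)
The plan is to take $x \in \bigcap_{\alpha\in\Delta}(E_\Delta^{\rm sep})^{\varphi_\alpha={\rm id}}$ (the reverse inclusion $\Bbb{F}_p\subseteq \bigcap_\alpha(E_\Delta^{\rm sep})^{\varphi_\alpha={\rm id}}$ is trivial) and to show $x\in\Bbb{F}_p$. Since $E_\Delta^{\rm sep}=\varinjlim E_\Delta'$ is a filtered colimit over finite separable extensions $E_\alpha'/E_\alpha$ and each $\varphi_\alpha$ preserves every $E_\Delta'$ (sending $X_\alpha'\mapsto (X_\alpha')^p$ and acting as Frobenius on $k_{\alpha'}$), I may reduce to $x\in E_\Delta'$ with $E_\alpha' = k_{\alpha'}((X_\alpha'))$, where the conditions $\varphi_\alpha(x)=x$ hold already in $E_\Delta'$. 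By construction $E_\Delta'^+$ is the completion of $\bigotimes_{\Bbb{F}_p}E_\alpha'^+$ under $|\cdot|_{\rm prod}$, which identifies with the formal power series ring $k_{\Delta'}\llbracket X_\alpha' \mid \alpha\in\Delta\rrbracket$ over $k_{\Delta'}:=\bigotimes_{\Bbb{F}_p}k_{\alpha'}$; hence $E_\Delta'=E_\Delta'^+[X_\Delta^{-1}]$ admits Laurent expansions $x = \sum_{\underline{n}}c_{\underline{n}}\prod_\alpha(X_\alpha')^{n_\alpha}$ with $c_{\underline{n}}\in k_{\Delta'}$ and each coordinate $n_\alpha$ bounded below.

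Next, since $\varphi_\alpha$ acts as the $p$-th power on the factor $k_{\alpha'}$ of $k_{\Delta'}$ (trivially on the others), $\varphi_\alpha(X_\alpha') = (X_\alpha')^p$, and $\varphi_\alpha(X_\beta') = X_\beta'$ for $\beta\neq\alpha$, one computes
\[
\varphi_\alpha(x) = \sum_{\underline{n}}\varphi_\alpha(c_{\underline{n}})(X_\alpha')^{pn_\alpha}\prod_{\beta\neq\alpha}(X_\beta')^{n_\beta}.
\]
Comparing $\varphi_\alpha(x)=x$ monomial by monomial forces $c_{\underline{n}}=0$ whenever $p\nmid n_\alpha$, and $c_{\underline{n}} = \varphi_\alpha(c_{\underline{m}})$ whenever $n_\alpha=pm_\alpha$ (where $\underline{m}$ agrees with $\underline{n}$ in the other coordinates). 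Iterating: if $n_\alpha\neq 0$, write $n_\alpha = p^k q$ with $p\nmid q$; descending $k$ times yields $c_{\underline{n}}=\varphi_\alpha^k(0)=0$. Applying this for every $\alpha\in\Delta$ eliminates all coefficients except $c_{\underline{0}}$, so $x=c_{\underline{0}}\in k_{\Delta'}$ and each $\varphi_\alpha$ fixes $c_{\underline{0}}$.

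It remains to show $(k_{\Delta'})^{\varphi_\alpha={\rm id},\,\forall\alpha\in\Delta}=\Bbb{F}_p$, which I would handle by induction on $|\Delta|$. For $|\Delta|=1$ this is immediate: $k_{\Delta'}=k_{\alpha'}$ is a field of characteristic $p$ with Frobenius fixed field $\Bbb{F}_p$. For the inductive step, fix $\alpha\in\Delta$ and write $k_{\Delta'}\cong k_{\alpha'}\otimes_{\Bbb{F}_p}\bigl(\bigotimes_{\beta\neq\alpha}k_{\beta'}\bigr)$. The $\Bbb{F}_p$-linear exact sequence $0\to\Bbb{F}_p\to k_{\alpha'}\xrightarrow{\varphi_\alpha-1}k_{\alpha'}$ remains exact after tensoring over $\Bbb{F}_p$ with the flat module $\bigotimes_{\beta\neq\alpha}k_{\beta'}$, so the kernel of $\varphi_\alpha-1$ on $k_{\Delta'}$ equals $\bigotimes_{\beta\neq\alpha}k_{\beta'}$ (embedded via $1\otimes-$). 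Applying the induction hypothesis to this smaller tensor product with the remaining $\varphi_\beta$ ($\beta\neq\alpha$) yields $c_{\underline{0}}\in\Bbb{F}_p$, as required.

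The hard part will be verifying the identification $E_\Delta'^+\cong k_{\Delta'}\llbracket X_\alpha' \mid \alpha\in\Delta\rrbracket$ and justifying the term-by-term coefficient comparison: this requires careful handling of the completion with respect to $|\cdot|_{\rm prod}$ and its compatibility with the tensor product structure on the coefficient ring. Once this topological description is secured, the rest of the argument is purely formal.
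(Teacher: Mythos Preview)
Your argument is correct and follows the same broad outline as the paper's proof: reduce to a fixed level $E_\Delta'$, identify $E_\Delta'^+$ with $k_{\Delta'}\llbracket X_\alpha'\mid\alpha\in\Delta\rrbracket$, show that only the constant term can survive, and then handle the constant by induction on $|\Delta|$. The inductive step for the constant term is essentially identical in both proofs (your flatness/exact-sequence argument is just a repackaging of the basis decomposition the paper uses).

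Where you genuinely differ is in the mechanism for killing the non-constant part. The paper first observes that $\varphi_s=\prod_\alpha\varphi_\alpha$ is the absolute Frobenius, so $u^p=u$, and then invokes the reducedness of $k_{\Delta'}$ (Lemma~\ref{lem2.3}) to make the $X_\alpha'$-adic norm power-multiplicative and force $|u|_\alpha=1$; it then iterates on $u-u_0$. You instead match coefficients for each $\varphi_\alpha$ separately and use the elementary descent $n_\alpha=p^kq\Rightarrow c_{\underline n}=\varphi_\alpha^k(c_{\underline m})$ with $p\nmid m_\alpha$ to kill every coefficient with some $n_\alpha\neq 0$. This is cleaner and, notably, does not require the reducedness of $k_{\Delta'}$ at all --- so your route bypasses the appeal to Lemma~\ref{lem2.3}. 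The trade-off is that your argument leans more heavily on the explicit identification $E_\Delta'^+\cong k_{\Delta'}\llbracket X_\alpha'\rrbracket$ and the termwise compatibility of $\varphi_\alpha$ with this expansion, which you rightly flag as the point needing care; once that is in place (and it is, since the product norm on $E_{\Delta,\circ}'^+$ restricts to the augmentation-ideal filtration), the rest is formal.
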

\begin{proof}

The containment $\Bbb{F}_p \subseteq \bigcap\limits_{\alpha  \in \Delta } \left(E_\Delta ^{\rm sep}\right)^{\varphi _\alpha= {\rm id}}$ is obvious. 
On the other hand,
let $u \in E_\Delta ^{\rm sep}$ be an arbitrary element such that $\varphi _\alpha(u) = u$ for all 
$\alpha  \in \Delta $. We also have $u^p= \varphi _s(u)=u$
as $\varphi _s=\prod\limits_{\alpha\in \Delta} \varphi_\alpha$ is the absolute Frobenius on $E_\Delta ^{\rm sep}$. 
Since $E_\Delta ^{\rm sep}$ is defined to be an inductive limit, $u$ lies
in $E_\Delta ^\prime \cong \left(\mathop  \bigotimes \limits_{\alpha  \in \Delta , \Bbb{F}_p} k_\alpha^\prime \right)\llbracket  X_\alpha ^\prime \, | \, \alpha  \in \Delta \rrbracket 
[X_\Delta ^{ - 1}]$ for some collection $E_\alpha ^\prime = k_\alpha^\prime((X_\alpha ^\prime))\, (\alpha  \in \Delta )$ of
finite separable extensions of $E_\alpha$.

Since $k_\alpha^\prime$ is a finite separable extension of $k_\alpha$, it is a finitely generated field extension of $\Bbb{F}_p$. Therefore by Lemma \ref{lem2.3}  $k_\Delta^\prime:=\mathop  \bigotimes  \limits_{\alpha \in \Delta, \Bbb{F}_p}  k_\alpha^\prime$ is reduced. 
This implies that for each $\alpha\in\Delta$ the norm $|\cdot|_\alpha$ on $E_\Delta^\prime=k_\Delta^\prime \llbracket X_\alpha ^\prime\, |\, \alpha  \in \Delta \rrbracket [X_\Delta^{-1}]$ defined by the $X_\alpha^\prime$-adic valuation is power-multiplicative as the powers of the leading coefficient (with respect to the $X_\alpha^\prime$-degree) of an element cannot vanish. 
Therefore, we have $| u^p |_{\alpha} = | u |_{\alpha}^p$ for all $\alpha\in\Delta$. We deduce $| u |_{\alpha}=1$
unless $u = 0$. In particular, $u$ lies in 
$E_\Delta ^{\prime + } =k_\Delta^\prime \llbracket X_\alpha ^\prime\, |\, \alpha  \in \Delta \rrbracket$. 
The constant term
$u_0\in k_\Delta^\prime$ also satisfies ${\varphi _\alpha }({u_0}) = {u_0}$ for all ${\alpha  \in \Delta }$. 
Now, $ k_\Delta^\prime$ is an infinite dimensional 
vector space over $\Bbb{F}_p$. For a fixed ${\alpha  \in \Delta }$, we can choose elements of 
an $\Bbb{F}_p$-basis ${d_1}, \cdots ,{d_n}$ of $\mathop \bigotimes \limits_{\beta  \in \Delta \backslash \{\alpha\} , \Bbb{F}_p} k_\beta^\prime $
such that ${u_0} = \sum\limits_{i = 1}^n {{c_i} \otimes {d_i}} $ with ${c_i} \in k_\alpha^\prime$. 
This decomposition is unique and we compute
$$
\sum_{i = 1}^n {c_i} \otimes {d_i} = u_0 = \varphi _\alpha (u_0) = \sum_{i = 1}^n {c_i^p \otimes d_i} .
$$
We conclude $c_i = c_i^p$. But, (when $|\Delta|=1$) by \cite[Theorem 2.1.3]{Scholl} and \cite[Equation 2.1.5 and Equation 2.1.6]{Scholl}, 
we know that $\left( E_\alpha^{\rm sep}\right)^{\varphi_\alpha={\rm id}}=\Bbb{F}_p$. 
Therefore ${{c_i} \in \Bbb{F}_p}$ for all $1 \leqslant i \leqslant n$. It follows by induction on $|\Delta |$ that $u_0$ lies in
$\Bbb{F}_p$. Now $u - u_0$ is also fixed by each $\varphi _\alpha\, (\alpha  \in \Delta )$ and $\varphi_s$, but 
we have $| u - u_0 |_{\rm prod} < 1$. This implies by the discussion above that $u = u_0$ is in $\Bbb{F}_p$ as desired.
\end{proof}

\begin{proposition}\label{Prop5.7}
$\Bbb{D}(V)$ is an \' etale $(\varphi_\Delta, G_{L,\Delta}, E_{L,\Delta})$-module over $E_{L,\Delta}$ of rank 
$d: = \dim _{\Bbb{F}_p} V$. Moreover,
we have 
$$
E_\Delta ^{\rm sep} \mathop \bigotimes \limits_{E_{L,\Delta} }\Bbb{D}(V) \cong E_\Delta ^{\rm sep} \mathop \bigotimes \limits_{\Bbb{F}_p} V, 
$$ 
and
$$
V = \bigcap\limits_{\alpha  \in \Delta } \left(E_\Delta ^{\rm sep}\bigotimes _{E_{L,\Delta} } \Bbb{D}(V)\right)^{\varphi _\alpha= {\rm id}} .
$$
\end{proposition}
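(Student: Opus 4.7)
The plan is to build everything from one good choice of basis and then extract the four claims (freeness of rank $d$, the basechange isomorphism, étaleness, and the recovery formula) by successively applying Lemmas \ref{Lem5.3}, Proposition \ref{Prop5.2}, Lemma \ref{Lem5.4}, and Lemma \ref{Lem5.5} in that order. First I would invoke Lemma \ref{Lem5.3} to pick an $E_\Delta^{\rm sep}$-basis $e_1,\dots,e_d$ of $E_\Delta^{\rm sep}\otimes_{\Bbb{F}_p}V$ consisting of $\mathcal{H}_{L,\Delta}$-fixed elements. By construction $e_1,\dots,e_d\in \Bbb{D}(V)$, and since the $\mathcal{H}_{L,\Delta}$-action commutes with each $\varphi_\alpha$ and with the residual $G_{L,\Delta}$-action, the operators $\varphi_\alpha$ and the groups $G_{L_\alpha}$ preserve $\Bbb{D}(V)$.

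The next step is to show $\Bbb{D}(V)=\bigoplus_{i=1}^dE_{L,\Delta}e_i$, which will deliver both freeness of rank $d$ and the basechange isomorphism. Given $x\in\Bbb{D}(V)$, write uniquely $x=\sum_{i=1}^da_ie_i$ with $a_i\in E_\Delta^{\rm sep}$; applying $h\in\mathcal{H}_{L,\Delta}$ and using $h(x)=x$, $h(e_i)=e_i$ and uniqueness forces $h(a_i)=a_i$, whence $a_i\in E_{L,\Delta}$ by Proposition \ref{Prop5.2}. This exhibits $\Bbb{D}(V)$ as free with basis $\{e_i\}$, and simultaneously proves that the natural map
$$E_\Delta^{\rm sep}\otimes_{E_{L,\Delta}}\Bbb{D}(V)\longrightarrow E_\Delta^{\rm sep}\otimes_{\Bbb{F}_p}V$$
sends a basis to a basis, hence is an $E_\Delta^{\rm sep}$-linear isomorphism (compatible with $\varphi_\alpha$ and $G_{L,\Delta}$).

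For étaleness I would compare two bases of $E_\Delta^{\rm sep}\otimes_{\Bbb{F}_p}V$: the basis $\{e_1,\dots,e_d\}$ from Lemma \ref{Lem5.3} and the basis $\{1\otimes v_j\}$ coming from an $\Bbb{F}_p$-basis $v_1,\dots,v_d$ of $V$. Let $B\in GL_d(E_\Delta^{\rm sep})$ be the change-of-basis matrix, so that $e_j=\sum_ib_{ij}(1\otimes v_i)$. Since $\varphi_\alpha$ acts trivially on $V$, applying $\varphi_\alpha$ yields $\varphi_\alpha(e_j)=\sum_i\varphi_\alpha(b_{ij})(1\otimes v_i)$; on the other hand, writing $\varphi_\alpha(e_j)=\sum_ia_{ij}e_i$ with the matrix $A=(a_{ij})$ having entries in $E_{L,\Delta}$, one obtains the cocycle relation $A=B^{-1}\varphi_\alpha(B)$. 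Therefore $\det(A)=\det(B)^{-1}\varphi_\alpha(\det(B))\in(E_\Delta^{\rm sep})^\times\cap E_{L,\Delta}=E_{L,\Delta}^\times$ by Lemma \ref{Lem5.4}, so the linearization ${\rm id}\otimes\varphi_\alpha\colon\varphi_\alpha^\ast\Bbb{D}(V)\to\Bbb{D}(V)$ is an isomorphism for every $\alpha\in\Delta$.

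Finally, the recovery formula follows once the basechange isomorphism is in place. The inclusion $V\subseteq\bigcap_{\alpha\in\Delta}(E_\Delta^{\rm sep}\otimes_{\Bbb{F}_p}V)^{\varphi_\alpha={\rm id}}$ is immediate because $\varphi_\alpha$ acts trivially on $V$. Conversely, any element $u$ of this intersection can be written uniquely as $u=\sum_jc_j(1\otimes v_j)$ with $c_j\in E_\Delta^{\rm sep}$, and the semilinearity of each $\varphi_\alpha$ forces $\varphi_\alpha(c_j)=c_j$ for all $\alpha$ and $j$; Lemma \ref{Lem5.5} then gives $c_j\in\Bbb{F}_p$, so $u\in V$. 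The main obstacle I anticipate is the first step of Paragraph 2 above, which is the only place where the non-field nature of $E_{L,\Delta}$ could cause trouble; but since Proposition \ref{Prop5.2} guarantees $(E_\Delta^{\rm sep})^{\mathcal{H}_{L,\Delta}}=E_{L,\Delta}$ with no further hypotheses, the standard descent argument goes through verbatim, and the rest is bookkeeping with change-of-basis matrices and the three lemmas.
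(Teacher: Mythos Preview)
Your proof is correct and follows essentially the same approach as the paper: invoke Lemma~\ref{Lem5.3} and Proposition~\ref{Prop5.2} to get freeness of rank $d$ and the basechange isomorphism, use Lemma~\ref{Lem5.4} on the matrix of $\varphi_\alpha$ to deduce \'etaleness, and conclude the recovery formula from Lemma~\ref{Lem5.5}. The paper's proof is extremely terse, and you have simply supplied the natural details (the coefficient-comparison to identify $\Bbb{D}(V)$ with $\bigoplus E_{L,\Delta}e_i$, and the cocycle relation $A=B^{-1}\varphi_\alpha(B)$ for the change-of-basis matrix) that the paper leaves implicit.
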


\begin{proof}
By Proposition \ref{Prop5.2}  and Lemma \ref{Lem5.3}  we can say that $\Bbb{D}(V)$ is a free module of rank $d$ over $E_{L,\Delta}$. 
Moreover, the matrix of $\varphi _\alpha$ in any basis of $\Bbb{D}(V)$ is invertible in $E_\Delta ^{\rm sep}$, 
therefore also in $E_{L,\Delta}$ by Lemma \ref{Lem5.4}. So the action of $(\varphi_\Delta, G_{L,\Delta})$ on $\Bbb{D}(V)$ is \' etale. 
The last statement is a direct consequence of Lemmas \ref{Lem5.3} and \ref{Lem5.5}.
\end{proof}

\begin{lemma}\label{Lem5.8}
For objects $V, V_1, V_2$ in ${\rm Rep}_{\Bbb{F}_p}({\mathcal{G}_{L,\Delta}})$, 
we have $\Bbb{D} (V_1 \bigotimes \limits_{\Bbb{F}_p} V_2) \cong \Bbb{D}(V_1) \bigotimes \limits_{E_{L,\Delta} }\Bbb{D}(V_2)$
and $\Bbb{D}(V^\ast) \cong \Bbb{D}(V)^\ast$.
\end{lemma}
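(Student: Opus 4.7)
The plan is to construct natural comparison maps in both cases and verify they are isomorphisms after scalar extension to $E_\Delta^{\rm sep}$. Both source and target will be free $E_{L,\Delta}$-modules of the same finite rank by Proposition \ref{Prop5.7}, so after picking $E_{L,\Delta}$-bases it suffices to show the matrix of the comparison map has a unit determinant. By Lemma \ref{Lem5.4} this determinant is a unit in $E_{L,\Delta}$ as soon as it becomes a unit in $E_\Delta^{\rm sep}$, i.e.\ as soon as the comparison map becomes invertible after tensoring with $E_\Delta^{\rm sep}$; the latter will be transparent via Proposition \ref{Prop5.7}.

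For the tensor product, define
$$\iota\colon \Bbb{D}(V_1)\bigotimes\limits_{E_{L,\Delta}}\Bbb{D}(V_2)\longrightarrow \Bbb{D}\bigl(V_1\bigotimes\limits_{\Bbb{F}_p} V_2\bigr),\quad (e_1\otimes v_1)\otimes (e_2\otimes v_2)\longmapsto (e_1 e_2)\otimes(v_1\otimes v_2).$$
This is well-defined because the product of two $\mathcal{H}_{L,\Delta}$-invariant elements in $E_\Delta^{\rm sep}\bigotimes_{\Bbb{F}_p}(V_1\bigotimes_{\Bbb{F}_p}V_2)$ is again $\mathcal{H}_{L,\Delta}$-invariant, and it is manifestly $(\varphi_\Delta, G_{L,\Delta})$-equivariant. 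Applying $E_\Delta^{\rm sep}\bigotimes_{E_{L,\Delta}}(-)$ and invoking Proposition \ref{Prop5.7} twice, both sides are canonically identified with $E_\Delta^{\rm sep}\bigotimes_{\Bbb{F}_p}(V_1\bigotimes_{\Bbb{F}_p}V_2)$, and under these identifications $\iota\otimes 1$ is the identity. Hence $\iota$ is an isomorphism by the reduction above.

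For the duality, endow $\Bbb{D}(V)^*:={\rm Hom}_{E_{L,\Delta}}(\Bbb{D}(V),E_{L,\Delta})$ with the transposed $(\varphi_\Delta, G_{L,\Delta})$-action (well-defined on the \'etale module $\Bbb{D}(V)$). Define
$$\jmath\colon \Bbb{D}(V^*)\longrightarrow \Bbb{D}(V)^*,\quad \phi\longmapsto\bigl(x\mapsto\langle\phi,x\rangle\bigr),$$
where $\langle-,-\rangle$ is the $E_\Delta^{\rm sep}$-bilinear extension of the evaluation pairing $V^*\times V\to\Bbb{F}_p$. Since both $\phi$ and $x$ are $\mathcal{H}_{L,\Delta}$-invariant, so is $\langle\phi,x\rangle$, hence it lies in $(E_\Delta^{\rm sep})^{\mathcal{H}_{L,\Delta}}=E_{L,\Delta}$ by Proposition \ref{Prop5.2}. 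Freeness of $\Bbb{D}(V)$ over $E_{L,\Delta}$ ensures that formation of $E_{L,\Delta}$-linear duals commutes with the base change to $E_\Delta^{\rm sep}$; after this base change, Proposition \ref{Prop5.7} identifies $\jmath\otimes 1$ with the tautological isomorphism $E_\Delta^{\rm sep}\bigotimes_{\Bbb{F}_p}V^*\xrightarrow{\sim}\bigl(E_\Delta^{\rm sep}\bigotimes_{\Bbb{F}_p}V\bigr)^*$, and the same determinant argument concludes.

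The main obstacle is bookkeeping: ensuring that tensor products and duals genuinely commute with base change along $E_{L,\Delta}\hookrightarrow E_\Delta^{\rm sep}$. This rests on the finiteness and freeness of $\Bbb{D}(V)$ from Proposition \ref{Prop5.7}, since duality does not commute with base change for arbitrary modules. Granted this, both claims reduce to the trivialization $E_\Delta^{\rm sep}\bigotimes_{E_{L,\Delta}}\Bbb{D}(V)\cong E_\Delta^{\rm sep}\bigotimes_{\Bbb{F}_p}V$, which then descends back to $E_{L,\Delta}$ via Lemma \ref{Lem5.4}.
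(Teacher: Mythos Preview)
Your argument is correct and is essentially the standard one: build the natural comparison maps, base change to $E_\Delta^{\rm sep}$ where Proposition~\ref{Prop5.7} trivializes everything, and descend the invertibility of the determinant via Lemma~\ref{Lem5.4}. The paper does not spell out a proof but simply refers to \cite[Lemma~3.8]{Zabradi2}, whose argument is precisely this one; so your proposal matches the intended proof.
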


\begin{proof}
The proof of \cite[Lemma 3.8]{Zabradi2} works in our imperfect case. 
\end{proof}

\begin{theorem}\label{Them5.9}
$\Bbb{D}$ is a fully faithful tensor functor from the category ${\rm Rep}_{\Bbb{F}_p}(\mathcal{G}_{L,\Delta})$ to the
category $\mathcal{D}^{\rm et}(\varphi _\Delta, G_{L,\Delta}, E_{L,\Delta})$.
\end{theorem}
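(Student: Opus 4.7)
The plan is to verify the three properties separately: functoriality, compatibility with tensor products, and full faithfulness. Functoriality is formal: a morphism $f\colon V_1\to V_2$ in ${\rm Rep}_{\Bbb{F}_p}(\mathcal{G}_{L,\Delta})$ induces an $(E_\Delta^{\rm sep},\mathcal{G}_{L,\Delta},\varphi_\Delta)$-equivariant map ${\rm id}\otimes f\colon E_\Delta^{\rm sep}\otimes_{\Bbb{F}_p} V_1\to E_\Delta^{\rm sep}\otimes_{\Bbb{F}_p} V_2$, which restricts to a morphism $\Bbb{D}(f)\colon\Bbb{D}(V_1)\to \Bbb{D}(V_2)$ in $\mathcal{D}^{\rm et}(\varphi_\Delta,G_{L,\Delta},E_{L,\Delta})$ upon taking $\mathcal{H}_{L,\Delta}$-invariants (and Proposition~\ref{Prop5.7} guarantees the target is indeed étale of the expected rank). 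Compatibility with tensor products is precisely the content of Lemma~\ref{Lem5.8}.

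For full faithfulness I would reduce the statement to the $\mathrm{Hom}$-level by the following standard trick. Given $V_1,V_2\in {\rm Rep}_{\Bbb{F}_p}(\mathcal{G}_{L,\Delta})$, internal $\mathrm{Hom}$ gives $\mathrm{Hom}_{\Bbb{F}_p}(V_1,V_2)\cong V_1^*\otimes_{\Bbb{F}_p}V_2$ as $\mathcal{G}_{L,\Delta}$-representations, so $\mathrm{Hom}_{\mathcal{G}_{L,\Delta}}(V_1,V_2)=(V_1^*\otimes_{\Bbb{F}_p}V_2)^{\mathcal{G}_{L,\Delta}}$. On the target side, since $\Bbb{D}(V_1)$ is a finitely generated projective (even stably free) $E_{L,\Delta}$-module by the results of Section~\ref{xxsec4}, we have $\mathrm{Hom}_{E_{L,\Delta}}(\Bbb{D}(V_1),\Bbb{D}(V_2))\cong \Bbb{D}(V_1)^*\otimes_{E_{L,\Delta}}\Bbb{D}(V_2)$, and imposing compatibility with the operators amounts to taking the $G_{L,\Delta}$-invariants that are simultaneously fixed by every $\varphi_\alpha$. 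Applying Lemma~\ref{Lem5.8} (both parts) gives
\[
\Bbb{D}(V_1)^*\otimes_{E_{L,\Delta}}\Bbb{D}(V_2)\;\cong\;\Bbb{D}(V_1^*)\otimes_{E_{L,\Delta}}\Bbb{D}(V_2)\;\cong\;\Bbb{D}(V_1^*\otimes_{\Bbb{F}_p} V_2).
\]
Setting $W:=V_1^*\otimes_{\Bbb{F}_p} V_2$, the proof of full faithfulness is thus reduced to producing a natural identification
\[
W^{\mathcal{G}_{L,\Delta}}\;=\;\Bbb{D}(W)^{G_{L,\Delta}}\cap\bigcap_{\alpha\in\Delta}\Bbb{D}(W)^{\varphi_\alpha={\rm id}}.
\]

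To establish this identification I would argue in both directions using Proposition~\ref{Prop5.7}. If $w\in W^{\mathcal{G}_{L,\Delta}}$ then $1\otimes w\in E_\Delta^{\rm sep}\otimes_{\Bbb{F}_p}W$ is $\mathcal{H}_{L,\Delta}$-invariant (so lies in $\Bbb{D}(W)$), is obviously fixed by every $\varphi_\alpha$, and is fixed by $G_{L,\Delta}$ because $G_{L,\Delta}=\mathcal{G}_{L,\Delta}/\mathcal{H}_{L,\Delta}$ acts diagonally. Conversely, if $d\in\Bbb{D}(W)$ lies in the right-hand intersection, then the identity $W=\bigcap_{\alpha}(E_\Delta^{\rm sep}\otimes_{E_{L,\Delta}}\Bbb{D}(W))^{\varphi_\alpha={\rm id}}$ from Proposition~\ref{Prop5.7} shows $d\in W$, and the remaining $G_{L,\Delta}$-invariance together with the automatic $\mathcal{H}_{L,\Delta}$-invariance forces $d\in W^{\mathcal{G}_{L,\Delta}}$.

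The main subtlety I expect to encounter is bookkeeping of the several commuting actions when passing from the $\mathrm{Hom}$-set to the tensor-dual formulation: one must check that a morphism of $(\varphi_\Delta,G_{L,\Delta})$-modules corresponds exactly to an element of $\Bbb{D}(V_1)^*\otimes\Bbb{D}(V_2)$ that is simultaneously $G_{L,\Delta}$-invariant and fixed by every partial Frobenius (and not merely by the absolute Frobenius $\varphi_s$). The dual and tensor compatibilities in Lemma~\ref{Lem5.8} are set up precisely so that these conditions transport under $\Bbb{D}$. The key structural input is Proposition~\ref{Prop5.7}, whose reconstruction formula for $V$ is essentially what forces both injectivity and surjectivity of the map on Hom-spaces.
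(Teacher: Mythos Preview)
Your argument is correct, but it takes a somewhat different route from the paper. The paper establishes faithfulness and fullness separately and directly: for faithfulness it notes that a nonzero $f\colon V_1\to V_2$ stays nonzero after extending scalars to $E_\Delta^{\rm sep}$ and then invokes Proposition~\ref{Prop5.7}; for fullness it starts from a morphism $\theta\colon \Bbb{D}(V_1)\to\Bbb{D}(V_2)$, extends scalars to $E_\Delta^{\rm sep}$, and applies the reconstruction formula $V_i=\bigcap_\alpha(E_\Delta^{\rm sep}\otimes_{E_{L,\Delta}}\Bbb{D}(V_i))^{\varphi_\alpha=\mathrm{id}}$ from Proposition~\ref{Prop5.7} to produce a preimage $f$. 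By contrast, you pass through the internal-$\mathrm{Hom}$/tensor-dual identification and Lemma~\ref{Lem5.8} to reduce both injectivity and surjectivity of $\mathrm{Hom}$-spaces to the single identity $W^{\mathcal{G}_{L,\Delta}}=\Bbb{D}(W)^{G_{L,\Delta}}\cap\bigcap_\alpha\Bbb{D}(W)^{\varphi_\alpha=\mathrm{id}}$, which you then verify using Proposition~\ref{Prop5.7}. The two arguments rest on the same structural input (Proposition~\ref{Prop5.7}); the paper's version is shorter and avoids the bookkeeping of the semilinear dual $\varphi_\alpha$-action that you flag as a subtlety, while your version has the virtue of packaging full faithfulness as a single invariants computation once Lemma~\ref{Lem5.8} is available.
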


\begin{proof}
Let $f:V_1 \longrightarrow  V_2$ be a nonzero morphism in ${\rm Rep}_{\Bbb{F}_p}(\mathcal{G}_{L,\Delta})$. 
Then the $E_\Delta ^{\rm sep}$-linear map
${\rm id} \bigotimes f: E_\Delta ^{\rm sep} \bigotimes \limits_{\Bbb{F}_p} V_1 \longrightarrow E_\Delta ^{\rm sep} \bigotimes \limits_{\Bbb{F}_p} V_2$ 
is also nonzero. By Proposition \ref{Prop5.7} we assert that $\Bbb{D}(f) \ne 0$, and therefore the faithfulness.

Now let $V_1$ and $V_2$ be arbitrary objects in ${\rm Rep}_{\Bbb{F}_p}(\mathcal{G}_{L,\Delta})$ 
and $\theta :\Bbb{D}(V_1) \longrightarrow \Bbb{D}(V_2)$ be a
morphism in $\mathcal{D}^{\rm et}(\varphi _\Delta, G_{L,\Delta}, E_{L,\Delta})$. 
Then by Proposition \ref{Prop5.7}, we obtain a $\mathcal{G}_{L,\Delta}$-equivariant $\Bbb{F}_p$-linear map
$$
f: V_1=\bigcap\limits_{\alpha  \in \Delta} \left(E_\Delta ^{\rm sep} \bigotimes \limits_{E_{L,\Delta}} \Bbb{D}(V_1)\right)^{\varphi _\alpha={\rm id}}  \longrightarrow \bigcap\limits_{\alpha  \in \Delta } \left(E_\Delta ^{\rm sep}\bigotimes \limits_{E_{L,\Delta}}\Bbb{D}(V_2)\right)^{\varphi _\alpha={\rm id}}  = V_2
$$
induced by $\theta$ for which we have $\theta = \Bbb{D}(f)$. Therefore $\Bbb{D}$ is full. The compatibility with tensor
product follows from Lemma \ref{Lem5.8}.
\end{proof}

\subsection{The functor $\Bbb{V}$}\label{xxsec5.2}
In the following, we define the functor $\Bbb{V}$ and show that it is a quasi-inverse of $\Bbb{D}$. This, in turn, implies that the functor $\Bbb{D}$ is essentially surjective. 
Let $D\in \mathcal{D}^{\rm et}(\varphi_\Delta, G_{L,\Delta}, E_{L,\Delta})$. 
It comes with a natural semilinear action of $\varphi_\alpha$ ($\alpha\in\Delta$) and 
the Galois group $G_{L,\Delta}$. We define 
$$
\Bbb{V}(D): = \bigcap\limits_{\alpha  \in \Delta} \left (E_\Delta^{\rm sep} \bigotimes \limits_{E_{L,\Delta}}D \right)^{\varphi_\alpha ={\rm id}} .
$$
$\Bbb{V}(D)$ is a---a priori not necessarily finite dimensional---representation of $\mathcal{G}_{L,\Delta}$ over $\Bbb{F}_p$.

\begin{lemma}\label{Lem5.10}
For any integer $r > 0$ we have 
$$\bigcap\limits_{\beta  \in \Delta \backslash \{\alpha\} } \left(E_{\Delta \backslash \{ \alpha \} }^{\rm sep}\otimes_{E_{L,\Delta\setminus\{\alpha\}}^+}E_{L,\Delta}^+/(X_\alpha ^r) \right)^{\varphi _\beta={\rm id}} =k_\alpha[X_\alpha]/(X_\alpha ^r).
$$
\end{lemma}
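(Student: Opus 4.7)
The plan is to prove the two inclusions separately. The inclusion $\supseteq$ is immediate: for $\beta \in \Delta\setminus\{\alpha\}$, the partial Frobenius $\varphi_\beta$ acts as the identity on $k_\alpha$ (since $\beta \neq \alpha$) and fixes $X_\alpha$ by~(3.2.1), so the natural image of $k_\alpha[X_\alpha]/(X_\alpha^r)$ in the tensor product lies in each fixed-point space. For the reverse inclusion I would first reduce to the case $r=1$ using the direct sum decomposition of $E_{L,\Delta\setminus\{\alpha\}}^+$-modules
$$
E_{L,\Delta}^+/(X_\alpha^r) = \bigoplus_{i=0}^{r-1} k_\Delta\llbracket X_\beta \mid \beta \in \Delta\setminus\{\alpha\}\rrbracket \cdot X_\alpha^i,
$$
which is preserved after tensoring with $E_{\Delta\setminus\{\alpha\}}^{\rm sep}$ and is respected by each $\varphi_\beta$ with $\beta \neq \alpha$ (since these commute with multiplication by $X_\alpha$). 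Writing a fixed element as $x = \sum_i x_i X_\alpha^i$ and noting each $x_i$ is fixed, the problem reduces to establishing
$$
\bigcap_{\beta \in \Delta\setminus\{\alpha\}} \bigl(E_{\Delta\setminus\{\alpha\}}^{\rm sep} \otimes_{E_{L,\Delta\setminus\{\alpha\}}^+} k_\Delta\llbracket X_\beta\rrbracket\bigr)^{\varphi_\beta = {\rm id}} = k_\alpha.
$$

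For this $r=1$ statement, I would fix an $\Bbb{F}_p$-basis $\{e_j\}_{j\in J}$ of $k_\alpha$ and use the identification $k_\Delta = k_{\Delta\setminus\{\alpha\}} \otimes_{\Bbb{F}_p} k_\alpha$ to write every element $b \in k_\Delta\llbracket X_\beta\rrbracket$ uniquely as an $X_\beta$-adically convergent sum $\sum_j f_j e_j$ with $f_j \in E_{L,\Delta\setminus\{\alpha\}}^+$ (finitely many $f_j$ contributing to each monomial $X_\beta^{\underline m}$). This yields an $E_{L,\Delta\setminus\{\alpha\}}^+$-linear injection into $\prod_{j \in J} E_{L,\Delta\setminus\{\alpha\}}^+$, which after tensoring with $E_{\Delta\setminus\{\alpha\}}^{\rm sep}$ gives a $\varphi_\beta$-equivariant map
$$
\widetilde{\iota}\colon E_{\Delta\setminus\{\alpha\}}^{\rm sep} \otimes_{E_{L,\Delta\setminus\{\alpha\}}^+} k_\Delta\llbracket X_\beta\rrbracket \longrightarrow \prod_{j \in J} E_{\Delta\setminus\{\alpha\}}^{\rm sep},
$$
where each $\varphi_\beta$ fixes the $e_j$ and acts via its action on $E_{\Delta\setminus\{\alpha\}}^{\rm sep}$ on each factor. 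For a $\bigcap\varphi_\beta$-fixed element $x$, every component $y_j$ of $\widetilde{\iota}(x)$ is fixed by all $\varphi_\beta$ with $\beta \neq \alpha$, hence $y_j \in \bigcap_\beta (E_{\Delta\setminus\{\alpha\}}^{\rm sep})^{\varphi_\beta = {\rm id}} = \Bbb{F}_p$ by Lemma~\ref{Lem5.5} applied to $\Delta\setminus\{\alpha\}$. Since each $f_j \to 0$ $X_\beta$-adically and multiplication by a fixed element of $E_{\Delta\setminus\{\alpha\}}^{\rm sep}$ shifts the $X_\beta$-valuation only by a bounded amount, the tuple $(y_j)_j$ also has $v_\beta(y_j) \to \infty$; but a nonzero element of $\Bbb{F}_p$ has valuation $0$, so only finitely many $y_j$ are nonzero, forcing $\widetilde{\iota}(x) \in \bigoplus_j \Bbb{F}_p\, e_j = k_\alpha$.

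The main obstacle is rigorously justifying that $\widetilde{\iota}$ is injective on the fixed-point subspace and that the $X_\beta$-adic convergence of the source transfers faithfully to the target. This is delicate because tensor products do not in general commute with infinite direct products, and Hypothesis~\ref{hyp2.2} only requires $k_\alpha$ to be finitely generated over $\Bbb{F}_p$ (not finite), so the basis $\{e_j\}$ is genuinely infinite and $k_\Delta\llbracket X_\beta\rrbracket$ is a topological completion of $E_{L,\Delta\setminus\{\alpha\}}^+ \otimes_{\Bbb{F}_p} k_\alpha$ rather than the algebraic tensor product. A safer route is induction on $|\Delta\setminus\{\alpha\}|$: the base case $|\Delta\setminus\{\alpha\}|=0$ is trivial, since the tensor product collapses to $k_\alpha[X_\alpha]/(X_\alpha^r)$, and the inductive step peels off one variable $X_{\beta_0}$ by factoring $E_{L,\Delta}^+/(X_\alpha^r)$ through $E_{L,\Delta\setminus\{\beta_0\}}^+/(X_\alpha^r)$, invoking the inductive hypothesis to extract the $\bigcap_{\beta \neq \beta_0,\alpha}\varphi_\beta$-fixed part first and then handling the residual $\varphi_{\beta_0}$-action via the one-variable result of Andreatta and Scholl combined with Lemma~\ref{Lem5.5}.
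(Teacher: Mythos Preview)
Your reduction to $r=1$ via the direct-sum decomposition in powers of $X_\alpha$ is valid, but after that point the argument does not close. The map $\widetilde{\iota}$ into $\prod_{j\in J} E_{\Delta\setminus\{\alpha\}}^{\rm sep}$ is the composite of the injection $k_\Delta\llbracket X_\beta\rrbracket\hookrightarrow\prod_j E_{L,\Delta\setminus\{\alpha\}}^+$ with the natural map $E_{\Delta\setminus\{\alpha\}}^{\rm sep}\otimes\prod_j(\cdot)\to\prod_j E_{\Delta\setminus\{\alpha\}}^{\rm sep}$; the first step stays injective after tensoring by flatness, but the second map is genuinely not injective for infinite $J$, and $k_\alpha$ is infinite over $\Bbb{F}_p$ under HYP~\ref{hyp2.2}. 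You flag this yourself, but the fallback you propose does not repair it: in the inductive step, ``factoring $E_{L,\Delta}^+/(X_\alpha^r)$ through $E_{L,\Delta\setminus\{\beta_0\}}^+/(X_\alpha^r)$'' runs into the same completion problem, since $k_\Delta\llbracket X_\beta\rrbracket$ is \emph{not} the algebraic tensor product $k_{\beta_0}((X_{\beta_0}))\otimes_{\Bbb{F}_p} k_{\Delta\setminus\{\beta_0\}}\llbracket X_\gamma\rrbracket$ when $k_{\beta_0}$ is infinite, so the inductive hypothesis for $\Delta\setminus\{\beta_0\}$ does not directly apply to the $\bigcap_{\beta\neq\beta_0}$-fixed part.

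The paper bypasses all of this by never isolating a basis of $k_\alpha$ and never reducing to $r=1$. Instead it works at a finite level $E'_{\Delta\setminus\{\alpha\}}$ and runs the valuation argument of Lemma~\ref{Lem5.5} directly: for each $\beta\in\Delta\setminus\{\alpha\}$, expand $u$ as a series in $X_\beta'$ and use that $\varphi_\beta$ is \emph{injective} on the coefficient ring (this is Lemma~\ref{lem2.3}, and holds even though $k_\alpha[X_\alpha]/(X_\alpha^r)$ is not reduced, because one only needs injectivity of $\varphi_\beta$, not power-multiplicativity of the norm). The equation $\varphi_\beta(u)=u$ then forces ${\rm val}_{X_\beta'}(u)=0$; iterating over all $\beta$ reduces to the constant term, which lies in $k'_{\Delta\setminus\{\alpha\}}\otimes_{\Bbb{F}_p}k_\alpha[X_\alpha]/(X_\alpha^r)$, and one finishes using $\bigcap_\beta(k'_{\Delta\setminus\{\alpha\}})^{\varphi_\beta={\rm id}}=\Bbb{F}_p$ as in Lemma~\ref{Lem5.5}, together with the trivial action of each $\varphi_\beta$ on $k_\alpha[X_\alpha]/(X_\alpha^r)$. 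This avoids infinite products and completions entirely.
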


\begin{proof}
First of all, we have
$$
E_{\Delta \backslash \{ \alpha \} }^{\rm sep}\otimes_{E_{\Delta\setminus\{\alpha\}}^+}E_{L,\Delta}^+/(X_\alpha ^r)=\varinjlim E_{{\beta _1}}^{\prime}{ \bigotimes \limits_{{E_{{\beta _1}}}}}\left(E_{{\beta _2}}^{\prime}{ \bigotimes \limits_{{E_{{\beta _2}}}}}\left( \cdots \left(E_{{\beta _{n-1}}}^{\prime}{ \bigotimes \limits_{{E_{{\beta _{n-1}}}}}}{E_{L,\Delta} }/(X_\alpha^r)\right)\right)\right).
$$
where $\Delta\setminus\{\alpha\}=\{\beta_1,\dots,\beta_{n-1}\}$.
As in Lemma \ref{Lem5.5} if $\varphi_\beta(u)=u$ for some element $u\in E_{\Delta \backslash \{ \alpha \} }^{\prime}\otimes_{E_{\Delta\setminus\{\alpha\}}^+}E_{L,\Delta}^+/(X_\alpha ^r)$ then by Lemma \ref{lem2.3} we must have $p{\rm val}_{X_\beta^\prime}(u)={\rm val}_{X_\beta^\prime}(u)$ showing ${\rm val}_{X_\beta\prime}(u)=0$. The constant term (with respect to the variables $X_\beta$, $\beta\in\Delta\setminus\{\alpha\}$) is an element in $k_{\Delta \backslash \{\alpha\}}\otimes_{\Bbb{F}_p} k_\alpha[X_\alpha]/(X_\alpha^r)$. So the statement follows the same way as in the proof of Lemma \ref{Lem5.5} noting 
$$
\bigcap\limits_{\beta\in \Delta \backslash \{\alpha\}} \left(k_{\Delta \backslash \{\alpha\}} \right)^{\varphi_\beta={\rm id}}=\Bbb{F}_p
$$
and $\varphi_\beta$ acts trivially on $k_\alpha[X_\alpha]/(X_\alpha^r)$. 
\end{proof}

\begin{lemma}\label{Lem5.11}
For any integer $r > 0$ and finitely generated $E_{\overline \alpha  }^ + /(X_\alpha ^r)$-module $M$ we have an
identification 
$$
E_{\Delta \backslash \{ \alpha \}}^{\rm sep}[X_\alpha ]/(X_\alpha ^r)\bigotimes \limits_{E_{\overline \alpha  }^ + /(X_\alpha ^r)} M \cong E_{\Delta \backslash \{ \alpha \} }^{\rm sep}\bigotimes \limits_{E_{\Delta \backslash \{ \alpha \} }}M.
$$
\end{lemma}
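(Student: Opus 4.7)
The plan is to unwind the notation $E_{\Delta\backslash \{\alpha\}}^{\rm sep}[X_\alpha]/(X_\alpha^r)$ consistently with the way the same object appeared in the proof of Lemma \ref{Lem5.10}, namely as the ring
$$E_{\Delta\backslash \{\alpha\}}^{\rm sep}\bigotimes \limits_{E_{L,\Delta\backslash\{\alpha\}}^+}E_{L,\Delta}^+/(X_\alpha^r),$$
and then to deduce the desired identification from the associativity of the tensor product.

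First, I would record the base-change identity
$$E_{L,\Delta\backslash\{\alpha\}}\bigotimes \limits_{E_{L,\Delta\backslash\{\alpha\}}^+}E_{L,\Delta}^+/(X_\alpha^r)=E_{L,\Delta}^+[X_{\Delta\backslash\{\alpha\}}^{-1}]/(X_\alpha^r)=E_{\overline{\alpha}}^+/(X_\alpha^r),$$
reflecting the fact that the variables $X_\beta$ ($\beta\in\Delta\backslash\{\alpha\}$) inverted in passing from $E_{L,\Delta}^+$ to $E_{\overline{\alpha}}^+$ already lie in the subring $E_{L,\Delta\backslash\{\alpha\}}^+\subseteq E_{L,\Delta}^+$.

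Next, using the trivial identity $E_{\Delta\backslash\{\alpha\}}^{\rm sep}=E_{\Delta\backslash\{\alpha\}}^{\rm sep}\bigotimes \limits_{E_{L,\Delta\backslash\{\alpha\}}}E_{L,\Delta\backslash\{\alpha\}}$ together with associativity of tensor products, I would compute
\begin{align*}
&E_{\Delta\backslash\{\alpha\}}^{\rm sep}[X_\alpha]/(X_\alpha^r)\bigotimes \limits_{E_{\overline{\alpha}}^+/(X_\alpha^r)} M\\
&\qquad= E_{\Delta\backslash\{\alpha\}}^{\rm sep}\bigotimes \limits_{E_{L,\Delta\backslash\{\alpha\}}}\left(E_{L,\Delta\backslash\{\alpha\}}\bigotimes \limits_{E_{L,\Delta\backslash\{\alpha\}}^+}E_{L,\Delta}^+/(X_\alpha^r)\right)\bigotimes \limits_{E_{\overline{\alpha}}^+/(X_\alpha^r)} M\\
&\qquad= E_{\Delta\backslash\{\alpha\}}^{\rm sep}\bigotimes \limits_{E_{L,\Delta\backslash\{\alpha\}}}\left(E_{\overline{\alpha}}^+/(X_\alpha^r)\bigotimes \limits_{E_{\overline{\alpha}}^+/(X_\alpha^r)} M\right)\\
&\qquad=E_{\Delta\backslash\{\alpha\}}^{\rm sep}\bigotimes \limits_{E_{L,\Delta\backslash\{\alpha\}}} M,
\end{align*}
which is the RHS.

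The only subtle point in this argument is to justify the interpretation of $E_{\Delta\backslash\{\alpha\}}^{\rm sep}[X_\alpha]/(X_\alpha^r)$ as the above tensor product; once this is admitted (and it is implicit in the analogous manipulation carried out in Lemma \ref{Lem5.10}), the rest is a formal chase. I note in passing that the finite generation hypothesis on $M$ is not actually used here; it is presumably stated so that the resulting object is well-behaved in subsequent applications.
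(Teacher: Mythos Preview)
Your proof is correct and is essentially the same associativity argument as the paper's. The paper's proof is a single line: it records the splitting $E_{\overline\alpha}^+/(X_\alpha^r)\cong E_{\Delta\backslash\{\alpha\}}\otimes_{\Bbb{F}_p}k_\alpha[X_\alpha]/(X_\alpha^r)$ (together with the parallel splitting of $E_{\Delta\backslash\{\alpha\}}^{\rm sep}[X_\alpha]/(X_\alpha^r)$), from which the identification follows by associativity of tensor products; you reach the same conclusion by routing the associativity through the intermediate ring $E_{L,\Delta\backslash\{\alpha\}}^+$ instead of $\Bbb{F}_p$, which is a harmless variation. Your remark that the finite generation of $M$ is not used is also accurate.
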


\begin{proof}
This follows from the isomorphism $E_{\overline \alpha  }^ + /(X_\alpha ^r) \cong {E_{\Delta \backslash \{ \alpha \} }}\otimes_{\Bbb{F}_p}k_\alpha[{X_\alpha }]/(X_\alpha ^r)$.
\end{proof}

For a subset $\Delta^\prime \subseteq \Delta$, we put $E_{\Delta^\prime}^{{\rm sep} +}: =\underrightarrow {\lim }E_{\Delta^\prime}^{\prime + }$
so we have ${E_{\Delta^\prime}^{\rm sep} = E_{\Delta^\prime}^{{\rm sep} + }[X_{\Delta^\prime}^{ - 1}]}$, where
$X_{\Delta^\prime}:=\prod \limits_{\alpha\in \Delta^\prime} X_\alpha$. 

The proofs of the following two Lemmas follow exactly as in \cite[Lemma 3.13]{Zabradi2} and \cite[Lemma 3.14]{Zabradi2} without 
any change.

\begin{lemma}\label{Lem5.12}
$E_{\Delta^\prime}^{\rm sep}$ (resp. ${E_{\Delta^\prime}^{{\rm sep} + }}$) is flat as a module over $E_{\Delta^\prime}$ (resp. over $E_{\Delta^\prime}^ + $
) for all $\Delta^\prime \subseteq \Delta$.
\end{lemma}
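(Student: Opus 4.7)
The plan is to treat the two assertions in parallel, reducing the infinite extension $E_{\Delta'}^{\mathrm{sep}}$ (resp.\ $E_{\Delta'}^{\mathrm{sep}+}$) to the finite-level case via a filtered direct limit, and then using that a finite separable step is in fact finite free, hence flat. The integral and Laurent-series versions only differ in how one handles the completion, so I would organize the argument so that the Laurent-series case is immediate from Lemma \ref{Lem5.1} and the integral case rests on a $^+$-analog of that lemma.

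For the Laurent-series version, fix a collection $(E_\alpha'/E_\alpha)_{\alpha \in \Delta'}$ of finite separable extensions. Each individual $E_\alpha \hookrightarrow E_\alpha'$ is a finite extension of fields and hence finite free. Iterating flat base change through the presentation
$$E_{\Delta'}^{\prime} \cong E_{\alpha_1}^{\prime}\otimes_{E_{\alpha_1}}\!\Bigl(E_{\alpha_2}^{\prime}\otimes_{E_{\alpha_2}}\!\bigl(\cdots\otimes_{E_{\alpha_n}} E_{\Delta'}\bigr)\Bigr)$$
of Lemma \ref{Lem5.1}, one sees that $E_{\Delta'}^{\prime}$ is finite free, and in particular flat, over $E_{\Delta'}$. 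The extension $E_{\Delta'}^{\mathrm{sep}} = \varinjlim E_{\Delta'}^{\prime}$ is then a filtered colimit (over the directed system of finite separable extensions in each coordinate) of flat $E_{\Delta'}$-modules, and filtered colimits of flat modules are flat.

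For the integral version I would first establish the $^+$-analog of Lemma \ref{Lem5.1},
$$E_{\Delta'}^{\prime+} \cong E_{\alpha_1}^{\prime+}\otimes_{E_{\alpha_1}^+}\!\Bigl(E_{\alpha_2}^{\prime+}\otimes_{E_{\alpha_2}^+}\!\bigl(\cdots\otimes_{E_{\alpha_n}^+} E_{\Delta'}^+\bigr)\Bigr),$$
and then observe that each $E_\alpha^+\to E_\alpha^{\prime+}$ is a finite extension of complete discrete valuation rings, so finite free of rank $[E_\alpha':E_\alpha]$. Iterated flat base change then gives $E_{\Delta'}^{\prime+}$ finite free over $E_{\Delta'}^+$, and passing to the filtered colimit yields flatness of $E_{\Delta'}^{\mathrm{sep}+}$ over $E_{\Delta'}^+$. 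As a sanity check, the non-$^+$ statement is also recovered by localizing at $X_{\Delta'}$, since $E_{\Delta'}^{\mathrm{sep}}=E_{\Delta'}^{\mathrm{sep}+}[X_{\Delta'}^{-1}]$ and $E_{\Delta'}=E_{\Delta'}^+[X_{\Delta'}^{-1}]$ and localization preserves flatness.

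The main obstacle is the $^+$-analog of Lemma \ref{Lem5.1}: the rings $E_{\Delta'}^{\prime+}$ and $E_{\Delta'}^+$ are defined as $|\cdot|_{\mathrm{prod}}$-completions of uncompleted tensor products, so the asserted identification is not a formal algebraic identity. The saving grace is that at each step of the iteration the new factor is finite free over the preceding ring, and a finite free module over a complete topological ring is automatically complete, so the ordinary algebraic tensor product at each stage already agrees with its product-norm completion. Concretely, I would fix a finite $E_\alpha^+$-basis of $E_\alpha^{\prime+}$ and check that the induced finite basis of $\bigotimes_{\Bbb{F}_p,\alpha}E_\alpha^{\prime+}$ over $\bigotimes_{\Bbb{F}_p,\alpha}E_\alpha^+$ remains a basis after completion, using that the product-norm decomposes along a finite direct sum into the supremum of the coordinatewise product-norms. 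Once this identification is in place, the rest of the argument is the routine iteration of flat base change together with the flatness of filtered colimits.
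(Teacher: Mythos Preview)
Your proposal is correct and follows essentially the same route as the paper, which simply refers to \cite[Lemma 3.13]{Zabradi2}: reduce to finite levels via the iterated tensor product description of Lemma~\ref{Lem5.1} (and its $^+$-analog), use that each single-variable step $E_\alpha^{(+)}\hookrightarrow E_\alpha^{\prime(+)}$ is finite free, and pass to the filtered colimit. Your explicit discussion of why the $|\cdot|_{\rm prod}$-completion commutes with the finite free base change---equivalently, that the $X_\alpha$-adic and $X_\alpha'$-adic filtrations are cofinal since $X_\alpha$ is a unit times a power of $X_\alpha'$---is exactly the point that makes the $^+$-version of Lemma~\ref{Lem5.1} go through, and is implicit in the argument of \cite{Zabradi2}.
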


\begin{lemma}\label{Lem5.13}
We have $\left(E_{\Delta \backslash \{ \alpha \}}^{{\rm sep} + }\llbracket X_\alpha \rrbracket [X_\Delta^{-1}]\right)^{\mathcal{H}_{\Delta \backslash \{ \alpha \}}}  = E_{L,\Delta}$.
\end{lemma}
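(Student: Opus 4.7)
The plan is to mirror the argument of Proposition \ref{Prop5.2}, adapted to partial invariance. The crucial observation is that the group $\mathcal{H}_{L,\Delta\setminus\{\alpha\}}$ acts trivially both on the variable $X_\alpha$ and on the coefficient field $k_\alpha$ attached to it, so the problem reduces to computing invariants on the $E_{\Delta\setminus\{\alpha\}}^{{\rm sep}+}$-factor alone, to which the integral version of Proposition \ref{Prop5.2} already applies.

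The containment $E_{L,\Delta}\subseteq$ LHS is immediate from the identification $E_{L,\Delta}^+\cong E_{L,\Delta\setminus\{\alpha\}}^+\widehat{\otimes}_{\Bbb{F}_p}k_\alpha\llbracket X_\alpha\rrbracket$ combined with the inclusion $E_{L,\Delta\setminus\{\alpha\}}^+\hookrightarrow E_{\Delta\setminus\{\alpha\}}^{{\rm sep}+}$, followed by inverting $X_\Delta$. For the reverse direction, given $u$ in the LHS I would first multiply by a suitable power $X_\Delta^N$; since each $X_\beta\in E_{L_\beta}\subseteq (E_\beta^{\rm sep})^{\mathcal{H}_{L_\beta}}$, this power is $\mathcal{H}_{L,\Delta\setminus\{\alpha\}}$-invariant, so we may assume $u\in E_{\Delta\setminus\{\alpha\}}^{{\rm sep}+}\llbracket X_\alpha\rrbracket$.

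Next I would filter by powers of $X_\alpha$. For each integer $r\geq 1$, consider the finite truncation
$$
Q_r:=E_{\Delta\setminus\{\alpha\}}^{{\rm sep}+}\otimes_{\Bbb{F}_p}k_\alpha[X_\alpha]/(X_\alpha^r).
$$
As the second tensor factor is free of finite rank over $\Bbb{F}_p$ and is acted on trivially by $\mathcal{H}_{L,\Delta\setminus\{\alpha\}}$, the \emph{Fact} used in the proof of Proposition \ref{Prop5.2} (with the roles of $A$ and $B$ played by $E_{\Delta\setminus\{\alpha\}}^{{\rm sep}+}$ and $k_\alpha[X_\alpha]/(X_\alpha^r)$, respectively) yields
$$
Q_r^{\mathcal{H}_{L,\Delta\setminus\{\alpha\}}}=(E_{\Delta\setminus\{\alpha\}}^{{\rm sep}+})^{\mathcal{H}_{L,\Delta\setminus\{\alpha\}}}\otimes_{\Bbb{F}_p}k_\alpha[X_\alpha]/(X_\alpha^r)=E_{L,\Delta\setminus\{\alpha\}}^+\otimes_{\Bbb{F}_p}k_\alpha[X_\alpha]/(X_\alpha^r),
$$
where the last equality invokes the integral $(+)$-version of Proposition \ref{Prop5.2}. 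The image of $u$ in each $Q_r$ thus lies in the invariant subring, and compatibility across $r$ places $u$ in the inverse limit $E_{L,\Delta\setminus\{\alpha\}}^+\widehat{\otimes}_{\Bbb{F}_p}k_\alpha\llbracket X_\alpha\rrbracket=E_{L,\Delta}^+$. Reinverting $X_\Delta$ then gives $u\in E_{L,\Delta}$, as desired.

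The main obstacle is the passage to the inverse limit, since taking $\mathcal{H}_{L,\Delta\setminus\{\alpha\}}$-invariants does not in general commute with inverse limits. As in the analogous step of Proposition \ref{Prop5.2}, however, only an inclusion is needed: a coherent system of invariant classes modulo $(X_\alpha^r)$ uniquely determines an element of the completed tensor product that is automatically fixed by the group action, yielding the required membership in $E_{L,\Delta}^+$ before we invert $X_\Delta$.
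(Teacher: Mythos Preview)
Your approach is essentially the same as the paper's (which defers to \cite[Lemma 3.14]{Zabradi2} and in turn mirrors the proof of Proposition \ref{Prop5.2}): clear denominators, filter by powers of $X_\alpha$, apply the tensor-product Fact on each truncation, and pass to the inverse limit. One small slip: you assert that $k_\alpha[X_\alpha]/(X_\alpha^r)$ is ``free of finite rank over $\Bbb{F}_p$'', which is false in the imperfect residue field setting since $[k_\alpha:\Bbb{F}_p]=\infty$; however, as the paper itself notes immediately after the Fact in the proof of Proposition \ref{Prop5.2}, the finite-dimensionality hypothesis on $B$ can be dropped by writing $B$ as a filtered colimit of finite-dimensional subspaces, so your conclusion still stands.
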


Our main result in this section is the following

\begin{theorem}\label{Them5.14}
The functors $\Bbb{D}$ and $\Bbb{V}$ are quasi-inverse equivalences of categories between
 ${\rm Rep}_{{\Bbb{F}_p}}(\mathcal{G}_{L,\Delta})$ and $\mathcal{D}^{\rm et}(\varphi _\Delta, G_{L,\Delta}, E_{L,\Delta})$.
\end{theorem}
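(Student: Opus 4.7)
The plan is to leverage Theorem \ref{Them5.9}, which already shows $\Bbb{D}$ is fully faithful, together with Proposition \ref{Prop5.7}, which exhibits a natural isomorphism $\Bbb{V}\circ\Bbb{D}\cong{\rm id}$ on ${\rm Rep}_{\Bbb{F}_p}(\mathcal{G}_{L,\Delta})$. What remains is essential surjectivity of $\Bbb{D}$: for every $D\in\mathcal{D}^{\rm et}(\varphi_\Delta,G_{L,\Delta},E_{L,\Delta})$ of rank $d$, the $\Bbb{F}_p$-space $\Bbb{V}(D)$ has dimension $d$, carries a continuous $\mathcal{G}_{L,\Delta}$-action, and the canonical comparison map
\[
E_\Delta^{\rm sep}\otimes_{\Bbb{F}_p}\Bbb{V}(D)\longrightarrow E_\Delta^{\rm sep}\otimes_{E_{L,\Delta}}D
\]
is an isomorphism. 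Once this is known, taking $\mathcal{H}_{L,\Delta}$-invariants on both sides, invoking Proposition \ref{Prop5.2} together with the flatness assertion of Lemma \ref{Lem5.12}, yields $\Bbb{D}(\Bbb{V}(D))\cong D$, so the two functors are quasi-inverse.

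The argument proceeds by induction on $|\Delta|$. The base case $|\Delta|=1$ is Andreatta's equivalence for a single imperfect residue field \cite[Theorem~7.11]{Andreatta}. For the inductive step, fix $\alpha\in\Delta$ and consider the integral lattice $D_{\overline{\alpha}}^{+\ast}\subset D$ from Section \ref{xxsec4}. By Proposition \ref{Prop4.10}, $D_{\overline{\alpha}}^{+\ast}$ is \'etale for every operator in $\{\varphi_\beta\mid\beta\in\Delta\setminus\{\alpha\}\}\cup G_{L,\Delta}$ over $E_{\overline{\alpha}}^{+}$. For each $r\geq 1$, reduce modulo $X_\alpha^r$; via Lemma \ref{Lem5.11}, the quotient becomes an \'etale multivariable module on $\Delta\setminus\{\alpha\}$ with coefficient ring enlarged by $k_\alpha[X_\alpha]/(X_\alpha^r)$. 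The induction hypothesis, applied with these enlarged scalars, supplies a basis of this reduction by elements fixed by $\varphi_\beta$ for $\beta\neq\alpha$ and by $\mathcal{H}_{L,\Delta\setminus\{\alpha\}}$. Passing to the projective limit in $r$ and inverting $X_\Delta$, Lemmas \ref{Lem5.12} and \ref{Lem5.13} yield a trivialization of $D$ over $E_{\Delta\setminus\{\alpha\}}^{\rm sep}\otimes_{E_{\Delta\setminus\{\alpha\}}}E_{L,\Delta}$ by vectors fixed under $\{\varphi_\beta\mid\beta\neq\alpha\}\cup\mathcal{H}_{L,\Delta\setminus\{\alpha\}}$.

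It remains to incorporate $\varphi_\alpha$ and $\mathcal{H}_{L_\alpha}$. Restricting to this partially trivialized module and using Lemma \ref{Lem4.12} to extract a $\varphi_{\overline{\alpha}}$-stable finitely generated generating submodule, one interprets the residual structure as a single-variable \'etale $(\varphi_\alpha,G_{L_\alpha})$-module and applies Andreatta's theorem along the remaining direction to obtain enough $\varphi_\alpha$-invariant vectors fixed by $\mathcal{H}_{L_\alpha}$. The intersection lands in $\Bbb{V}(D)$; Lemma \ref{Lem5.10} (the $(X_\alpha^r)$-truncated analogue) and Lemma \ref{Lem5.5} ensure that these span exactly $\Bbb{F}_p^d$ inside the base change, and the comparison map is an isomorphism because both sides then become free of rank $d$ over $E_\Delta^{\rm sep}$. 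Continuity of the $\mathcal{G}_{L,\Delta}$-action on $\Bbb{V}(D)$ follows because the constructed basis is defined over some finite-level $E_\Delta^\prime$.

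The principal obstacle is the transition between the two halves of the induction: the inductive hypothesis trivializes $D$ by vectors fixed under the \emph{partial} Frobenii $\varphi_\beta$ for $\beta\neq\alpha$, but a priori these need not be close to vectors also fixed by $\varphi_\alpha$. Resolving this requires a successive-approximation argument in the $|\cdot|_{\rm prod}$-norm, analogous to \cite[\S3.3]{Zabradi2}: at each step one corrects the basis by a vector of strictly higher $X_\alpha$-valuation to kill the discrepancy with $\varphi_\alpha$-fixedness, and then appeals to the completeness of $E_\Delta^{{\rm sep}+}$ to take the limit. The imperfectness of the residue fields forces the use of Lemma \ref{lem2.3} throughout (since $k_\Delta$ is no longer a field and $|\cdot|_{\rm prod}$ is only submultiplicative), but otherwise does not alter the structure of the argument beyond the modifications already packaged in Lemmas \ref{Lem4.2}--\ref{Lem4.4}.
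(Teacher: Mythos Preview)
Your outline matches the paper's proof (which in turn follows \cite[Theorem~3.15]{Zabradi2}) in all the essential structural features: reduction to essential surjectivity via Theorem~\ref{Them5.9} and Proposition~\ref{Prop5.7}, induction on $|\Delta|$ with Andreatta's theorem as the base case, reduction of $D_{\overline{\alpha}}^{+\ast}$ modulo $X_\alpha^r$, application of the induction hypothesis over $\Delta\setminus\{\alpha\}$, passage to the projective limit, and then the single-variable equivalence along the remaining direction $\alpha$.

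However, your final paragraph misidentifies the principal technical obstacle. There is no successive-approximation argument in $|\cdot|_{\rm prod}$ to reconcile $\varphi_\alpha$-fixedness with $\varphi_\beta$-fixedness. Once the $\varphi_\beta$-invariants (for $\beta\neq\alpha$) have been extracted to form $D_\alpha$, the operator $\varphi_\alpha$ already acts on $D_\alpha$ as an \'etale semilinear endomorphism, and the single-variable theorem applies to $D_\alpha$ \emph{directly}; no correction procedure is required. The genuine subtlety, which the paper isolates as Step~2 (the analogue of \cite[Lemma~3.17]{Zabradi2}), is the \emph{uniformity in $r$}: one must show that the $\varphi_\beta$-fixed part of $E_{\Delta\setminus\{\alpha\}}^{\rm sep}[X_\alpha]/(X_\alpha^r)\otimes_{E_{\overline{\alpha}}^+}D_{\overline{\alpha}}^{+\ast}$ lies inside the image of a fixed finitely generated $E_{L,\Delta}^+$-submodule $M\subset D_{\overline{\alpha}}^{+\ast}$ independent of $r$. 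Without this, the projective limit over $r$ could fail to be finitely generated, and $D_\alpha$ would not be an object in the single-variable \'etale category. Lemma~\ref{Lem4.12} is indeed the key input here, but it is used \emph{before} the projective limit (to produce the bounding submodule $M$ via the chain $D_0\subset E_{L,\Delta}^+\varphi_{\overline{\alpha}}(D_0)\subset\cdots$), not afterwards as you suggest. You should replace the approximation discussion with this uniform-boundedness argument.
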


\begin{proof}
The proof is essentially the same as that of \cite[Theorem 3.15]{Zabradi2}. Instead of repeating the proof, 
we recall the strategy and just point out, what the changes should be adapted to 
making the argument work in our imperfect residue field case.

{\bf Step1}. \emph{Reducing the statement to the essential surjectivity of $\Bbb{D}$}. 
By Theorem \ref{Them5.9} $\Bbb{D}$ is fully faithful and by Proposition \ref{Prop5.7} we have $\Bbb{V}\circ\Bbb{D}(V)\cong V$ for 
any object $V$ in ${\rm Rep}_{{\Bbb{F}_p}}(\mathcal{G}_{L,\Delta})$. So we are reduced to showing that 
$\mathbb{D}$ is essentially surjective. This is done by induction on $|\Delta|$. 
The case of $|\Delta|=1$ is due to Scholl \cite{Scholl} and Andreatta \cite[Theorem 7.11]{Andreatta}. 
Suppose that $|\Delta | > 1$, fix
${\alpha  \in \Delta }$, and pick an object $D$ in $\mathcal{D}^{et}(\varphi _\Delta, G_{L, \Delta}, E_{L, \Delta})$.

\vspace{2mm}

{\bf Step 2}. 
\emph{The goal here is to trivialize the $({\varphi_{\overline \alpha}, \varphi _\beta })$-action {\rm (}$\beta  \in \Delta \backslash \{ \alpha \} ${\rm )} on $D_{\overline \alpha  }^{ + *}/X_\alpha ^rD_{\overline \alpha  }^{ + *}$ uniformly
in $r$ by tensoring up with ${E_{\Delta \backslash \{ \alpha \} }^{sep}}$.} 
The place which we need to modify is the Equation (4) before \cite[Lemma 3.17]{Zabradi2}.
In our imperfect case,  it should be reformulated as
$$
\begin{aligned}
E_{\Delta \backslash \{ \alpha \} }^{\rm sep} [X_\alpha]/(X_\alpha ^r) & \bigotimes \limits_{\Bbb{F}_p[X_\alpha]/(X_\alpha ^r)}\, \,  \bigcap\limits_{\beta  \in \Delta \backslash \{ \alpha \} }\left (E_{\Delta \backslash \{ \alpha \} }^{\rm sep}[X_\alpha]/(X_\alpha ^r) \bigotimes \limits_{E_{\overline \alpha }^ + /(X_\alpha ^r)} D_{\overline \alpha  ,r}^{ + \ast} \right)^{\varphi_{\overline{\alpha}}={\rm id}, \, \, \varphi _\beta ={\rm id}} \\
&\, \, \,   \, \, \, \, \, \xrightarrow[]{\sim}   E_{\Delta \backslash \{ \alpha \} }^{\rm sep}[X_\alpha]/(X_\alpha ^r)\bigotimes \limits_{E_{\overline \alpha}^+/(X_\alpha ^r)} D_{\overline \alpha, r}^{+ \ast} \\
&\, \, \,  \, \, \, \, \,   \cong E_{\Delta \backslash \{ \alpha \} }^{\rm sep}[X_\alpha]/(X_\alpha ^r) \bigotimes _{E_{\overline \alpha }^ +} D_{\overline \alpha  }^{ + *}.
\end{aligned}
$$

Lemma 3.17 in \cite{Zabradi2} should be changed into: there exists a 
finitely generated $E_{L,\Delta} ^ + $-submodule $M \leqslant D_{\overline \alpha  }^{+ \ast}$
such that
$$
\bigcap \limits_{\beta  \in \Delta \backslash \{ \alpha \} } \left( E_{\Delta \backslash \{ \alpha \} }^{\rm sep}[X_\alpha]/(X_\alpha ^r)\bigotimes \limits_{E_{\overline \alpha  }^ + } 
D_{\overline \alpha  }^{+ \ast}\right)^{\varphi_{\overline{\alpha}}={\rm id}, \varphi _\beta = {\rm id}}
$$
is contained in the image of the map
$$
\begin{aligned}
E_{\Delta \backslash \{ \alpha \} }^{\rm sep}[X_\alpha]/(X_\alpha ^r) \bigotimes \limits_{E_{L,\Delta} ^ + } M & \longrightarrow E_{\Delta \backslash \{ \alpha \} }^{\rm sep}[X_\alpha]/(X_\alpha ^r) \bigotimes \limits_{E_{L,\Delta} ^ + }D_{\overline \alpha }^{ + \ast} \\
& \cong E_{\Delta \backslash \{ \alpha \} }^{\rm sep}[X_\alpha]/(X_\alpha ^r) \bigotimes \limits_{E_{\overline \alpha  }^ + }D_{\overline \alpha  }^{ + \ast}
\end{aligned}
$$
induced by the inclusion $M \leqslant D_{\overline \alpha  }^{ + *}$
for all $r > 0$.  

Note that the intersection in the above-mentioned is not only over $\varphi_\beta={\rm id}\, \, (\beta\in \Delta\backslash \{\alpha\})$ but 
also over $\varphi_{\overline{\alpha}}={\rm id}$, noting that $\varphi_{\overline{\alpha}}$ is the absolute Frobenius of 
$E_{\Delta \backslash \{\alpha\}}^{\rm sep}$. The reason why we need to make this change is because it 
is coherent with our Lemma \ref{Lem5.5} and also in the proof of \cite[Lemma 3.17]{Zabradi2} (where the third author 
use the fact that $\varphi^{l_r}_{\overline{\alpha}}(x)=x$).
\vspace{2mm}

{\bf Step 3}. \emph{The goal of this step is to show the following compatibility of our construction with projective limits with respect to $r$}. 
In our imperfect residue field case, Lemma 3.18 in \cite{Zabradi2} should be changed into: we have
$$
\varprojlim_r \left(E_{\Delta \backslash \{ \alpha \} }^{{\rm sep} + }[X_\alpha]/(X_\alpha ^r)\bigotimes \limits_{E_{L,\Delta}^+} M \right) \cong 
E_{\Delta \backslash \{ \alpha \} }^{{\rm sep} + }\llbracket X_\alpha \rrbracket \bigotimes \limits_{E_{L,\Delta} ^+} M, 
 $$
 $$
\varprojlim_r \left(E_{\Delta \backslash \{ \alpha \} }^{\rm sep}[X_\alpha]/(X_\alpha ^r) \bigotimes \limits_{E_{\overline \alpha}^ + } D_{\overline \alpha  }^{ + \ast} \right) \cong E_{\Delta \backslash \{ \alpha \} }^{\rm sep}\llbracket X_\alpha \rrbracket \bigotimes \limits_{E_{\overline \alpha  }^ + } D_{\overline \alpha  }^{ + *}, 
 $$
 and
 $$
 \begin{aligned}
&  \varprojlim_r \left(E_{\Delta \backslash \{ \alpha \} }^{\rm sep} [X_\alpha]/(X_\alpha ^r)\bigotimes \limits_{\Bbb{F}_p[X_\alpha]/(X_\alpha ^r)} \bigcap \limits_{\beta  \in \Delta \backslash \{ \alpha \} } \left( E_{\Delta \backslash \{ \alpha \} }^{\rm sep}[X_\alpha]/(X_\alpha ^r) \bigotimes \limits_{E_{\overline \alpha  }^ + /(X_\alpha ^r)} D_{\overline \alpha  ,r}^{ + *} \right)^{\varphi_{\overline{\alpha}}={\rm id}, \varphi _\beta= {\rm id}} \right)  \\
&  \cong  E_{\Delta \backslash \{ \alpha \} }^{\rm sep} \llbracket X_\alpha \rrbracket \bigotimes \limits_{\Bbb{F}_p \llbracket X_\alpha \rrbracket} \bigcap \limits_{\beta  \in \Delta \backslash \{ \alpha \} } \left(E_{\Delta \backslash \{ \alpha \} }^{\rm sep}\llbracket X_\alpha \rrbracket 
 \bigotimes \limits_{E_{L,\Delta}} D \right)^{\varphi_{\overline{\alpha}}={\rm id}, \varphi _\beta= {\rm  id}}.
 \end{aligned}
$$
 Everything else including the proof of \cite[Lemma 3.18]{Zabradi2} remains the same. 
\vspace{2mm}

{\bf Step 4}. 
\emph{Step 4. The goal here is to obtain a $(\varphi _\alpha, G_\alpha)$-module ${D_\alpha }$ over ${E_\alpha }$ {\rm (}by trivializing the action
of $(\varphi_{\overline{\alpha}}, \varphi _\beta),\beta  \in \Delta \backslash \{ \alpha \} ${\rm )} which is at the same time a linear representation of the group
$\mathcal{G}_{L, \Delta \backslash \{ \alpha \} }$.} 
In the Step 4 of the third author's proof (before \cite[Lemma 3.19]{Zabradi2}), the 
corresponding $D_\alpha$ in our imperfect residue field case should be defined as 
$$
D_\alpha: = \bigcap \limits_{\beta  \in \Delta \backslash \{ \alpha \} } \left(E_{\Delta \backslash \{ \alpha \} }^{\rm sep}((X_\alpha))\bigotimes \limits_{E_{L,\Delta} } D \right)^{\varphi_{\overline{\alpha}}={\rm id}, \varphi _\beta= {\rm id}}, 
$$
which is contained in the image of the map
$$
E_{\Delta \backslash \{ \alpha \} }^{{\rm sep} + } \llbracket X_\alpha \rrbracket [X_\alpha ^{ - 1}] \bigotimes \limits_{E_{L,\Delta}} D \hookrightarrow 
E_{\Delta \backslash \{ \alpha \} }^{\rm sep}((X_\alpha)) \bigotimes \limits_{E_{L,\Delta}} D.
 $$
Then $D_\alpha$ is an $\Bbb{F}_p((X_\alpha))$ vector space. 
\vspace{2mm}

{\bf Step 5}. \emph{The last step is to show the essential surjectivity of $\Bbb{D}$}. 
Lemma 3.19 of \cite{Zabradi2} exactly remains the same, including its proof, and so does 
the third author's Step 5 (cf. see after the proof of \cite[Lemma 3.19]{Zabradi2}). We do  not need to 
change anything here. 

\end{proof}

\begin{corollary}\label{Coro5.15}
Any object $D$ in $\mathcal{D}^{\rm et}({\varphi _\Delta, \varphi_s, G_{L,\Delta}, E_{L,\Delta} })$ is a free module over $E_{L,\Delta}$.
\end{corollary}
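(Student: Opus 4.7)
The plan is very short since almost all the work has already been done. By Theorem \ref{Them5.14}, the functor $\Bbb{D}\colon {\rm Rep}_{\Bbb{F}_p}(\mathcal{G}_{L,\Delta})\to \mathcal{D}^{\rm et}(\varphi_\Delta,G_{L,\Delta},E_{L,\Delta})$ is an equivalence of categories, and in particular essentially surjective. Therefore, given any object $D$ in $\mathcal{D}^{\rm et}(\varphi_\Delta,G_{L,\Delta},E_{L,\Delta})$, we may choose a finite-dimensional $\Bbb{F}_p$-representation $V$ of $\mathcal{G}_{L,\Delta}$ together with an isomorphism $D\cong \Bbb{D}(V)$ in the category of \'etale $(\varphi_\Delta,G_{L,\Delta})$-modules.

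Next I would invoke Proposition \ref{Prop5.7}, which asserts that $\Bbb{D}(V)$ is an \'etale $(\varphi_\Delta,G_{L,\Delta})$-module over $E_{L,\Delta}$ of rank $d=\dim_{\Bbb{F}_p}V$. The freeness statement there rests on Lemma \ref{Lem5.3} (an $\mathcal{H}_{L,\Delta}$-fixed basis of $E_\Delta^{\rm sep}\otimes_{\Bbb{F}_p}V$) combined with the Galois-descent Proposition \ref{Prop5.2} $(E_\Delta^{\rm sep})^{\mathcal{H}_{L,\Delta}}=E_{L,\Delta}$. Consequently $\Bbb{D}(V)$ is a free $E_{L,\Delta}$-module of rank $d$, and therefore so is $D$.

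There is really no obstacle: the assertion that stable freeness can be promoted to genuine freeness is the only thing that would otherwise need separate work (since the Proposition following Lemma \ref{Lem4.4} only guarantees stable freeness via $K_0$-considerations on each component $b_jE_{L,\Delta}$), and this is now absorbed into the equivalence $\Bbb{D}$. In other words, the equivalence of categories upgrades the a priori stably free modules to honestly free ones by providing the canonical free generating set coming from a Galois-fixed basis of $E_\Delta^{\rm sep}\otimes_{\Bbb{F}_p}V$. This completes the proof sketch.
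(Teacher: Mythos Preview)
Your proof is correct and follows exactly the paper's own argument: invoke the essential surjectivity of $\Bbb{D}$ from Theorem \ref{Them5.14} so that $D\cong\Bbb{D}(V)$, and then use that $\Bbb{D}(V)$ is free over $E_{L,\Delta}$ by construction (Proposition \ref{Prop5.7}, via Lemma \ref{Lem5.3} and Proposition \ref{Prop5.2}). Your additional remark that this upgrades the earlier stable-freeness result to genuine freeness is a nice observation but not needed for the proof itself.
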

\begin{proof}
By Theorem \ref{Them5.14} we know that $\Bbb{D}$ is essentially surjective. The Corollary follows by noting that 
any \' etale module in the image of the functor $\Bbb{D}$ is free as a module over $E_{L,\Delta}$ by construction. 
\end{proof}

\noindent{\bf Acknowledgements}
We would like to thank the anonymous referees for his/her tremendous job, who helped to correct 
a number of errors, lacunae and other inaccuracies 
(both mathematical and pedagogical), also taught us some theory of $(\varphi, \Gamma)$-modules 
and representations of Galois groups. 

The first author and the third author are thankful to Beijing Institute of Technology (BIT) for its gracious 
hospitality during a visit on August 2019 when this collaboration took place. The main idea of this work 
grew out of their visit to BIT. The first author also acknowledges the grant from PIMS-CNRS postdoctoral fellowship from the University of British Columbia. 
The first author is supported by postdoc research fellowship from the Tata Institute of Fundamental Research 
during the final revision of this paper. The third author was supported by the R\'enyi Institute Lend\"ulet Automorphic Research Group, by the 
NKFIH Research Grant FK-127906, and by Project ED 18-1-2019-0030 (Application-specific highly reliable 
IT solutions) under the Thematic Excellence Programme funding scheme.

\addtocontents{toc}{
    \protect\settowidth{\protect\@tocsectionnumwidth}{}%
    \protect\addtolength{\protect\@tocsectionnumwidth}{0em}}

\end{document}